\newcommand{\rdp}{\mathbb{R}^{d+1}}
\newcommand{\tdr}{\mathbb{T}^{d}\times\mathbb{R}}
\newcommand{\intTdRdp}{\int_{\mathbb{T}^d\times \mathbb{R}^{d+1}}}
\newcommand{\intc}[2]{\int_{\mathcal{C}_{#1,#2}}}
\newcommand{\ir}{[0,T]\times\mathbb{T}^d}
\newcommand{\ptd}{\mathcal{P}^1(\mathbb{T}^d)}
\newcommand{\pc}[2]{\mathcal{P}^1(\mathcal{C}_{#1,#2})}
\title{Viability analysis of the first-order mean field games}
\author{Yurii Averboukh\footnote{Krasovskii Institute of Mathematics and Mechanics,\ \ e-mail: averboukh@gmail.com}{ }\footnote{Ural Federal University}}
\date{}
\begin{document}

\maketitle

\begin{abstract} In the paper we examine the dependence of the solution of the deterministic mean field game on the initial distribution of players. The main object of study is the mapping which assigns to  the initial time and the initial distribution of players the set  of expected rewards of the representative player corresponding to solutions of mean field game. This mapping can be regarded as a value multifunction. We obtain the sufficient condition for a  multifunction to be a value multifunction. It states that if a multifunction is viable with respect to the dynamics generated by the original mean field game, then it is a value multifunction. Furthermore, the infinitesimal variant of this condition is derived. 
\keywords{mean field games, value multifucntion, viability property, set-valued derivative}
\msccode{91A10, 91A23, 49J52, 49J53, 46G05, 49J21}
\end{abstract}

\section*{Introduction}

The theory of mean field games (MFG) aims to study  noncooperative dynamical games of a large number similar players. The main idea of MFG approach is to  examine the limit case when the number of players tends to infinity and each player becomes negligible.

The concept of mean field games was proposed by Lasry, Lions~\cite{Lasry_Lions_2006_I},~\cite{Lasry_Lions_2006_II},~\cite{Lasry_Lions_2007} and by Huang, Caines, Malham\'{e}~\cite{Huang_Caines_Malhame_2007},~\cite{Huang_Malhame_Caines_2006}. Nowadays, there are several approaches to the mean field game theory. First one reduces the mean field game to the backward-forward system of fully-coupled nonlinear PDEs. The first equation is the Hamilton-Jacobi equation which describes the value function of the representative player. The second equation is the Chapman-Kolmogorov equation and it characterizes the distribution of all players.   Within the framework of this approach the existence and uniqueness problems for mean field games were studied (see~\cite{Gomes_et_book_2017},~\cite{Kolokoltsov_Li_Yang_2011},~\cite{Lasry_Lions_2006_II},~\cite{Lasry_Lions_2007},   and reference therein). Moreover,  one can construct an approximate Nash equilibrium for the finite player game given a solution of the mean field game \cite{Kolokoltsov_Li_Yang_2011}, \cite{lions_lecture}.

The second approach to the mean field games is called probabilistic.  It involves the study of infinite player dynamical game with similar players and mean field interaction among them. The probabilistic approach considers the solution of mean field game as the symmetric Nash equilibrium in this game (see  \cite{Carmona_Delarue_2013},~\cite{Carmona_Delarue_I}, \cite{Carmona_Delarue_II},~\cite{Carmona_Delarue_Lachapelle_2013}, \cite{Carmona_Lacker_2015},~\cite{Lacker_2015}). This allows to  prove that the  open-loop equilibria of finite players games  converge to the solution of mean field game when the number of players tends to infinity~\cite{Fischer_convergence_2017},~\cite{Lacker_convergence_2016}.

The third approach is concerned with the study of the partial differential equation involving the derivatives in the space of probabilities called the master equation. It was proposed by Lions in his seminal lectures \cite{lions_lecture} (see also \cite{Cardaliaguet_notes}). The master equation encapsulates the necessary  information to describe the solution of mean field game. It is used to establish the convergence of feedback equilibia of finite player games to a solution of mean field game a nondegenerate stochastic game \cite{Cardaliaguet_Delarue_Lasry_Lions_2015}. Moreover, Lions noticed that the classical mean field game system is the characteristic system of the master equation. The master equation was discussed in \cite{Bensoussan_Frehse_Yam_book},  \cite{Bensoussan_Frehse_Yam_2014}, \cite{Carmona_master}. Note that in \cite{Carmona_master} the master equation was formally derived using the dynamic programming arguments applied to the optimization problem for the representative player. In~\cite{Kolokoltstov_Troeva_common_2015} approximate equilibria in the finite player game with exogenous noise were constructed based on the solution of the mean field game with common noise.

Nowadays, the existence theorem for the master equation is  obtained for the case of nondegenerate stochastic mean field games (possibly, with common noise)  satisfying Lasry-Lions monotonicity condition \cite{Cardaliaguet_Delarue_Lasry_Lions_2015}. The proof is based on the fact that the usual mean field game system provides characteristics for the master equation.  Furthermore, the short-term existence theorem is proved  in \cite{Carmona_Delarue_II} and \cite{Gangbo_Swiech_2015}.

The paper aims to extend the approach of the master equation to the case when one can not expect the uniqueness of solution to mean field game as well as the continuous dependence on the measure variable. Notice that this situation is quite general \cite{Bardi_Fischer_2017} (see also \cite{Lacker_convergence_2016}). We examine the dependence of   the solution of the mean field game on the initial position  using the viability theory arguments (see \cite{Aubin}, \cite{Aubin_Cellina}). The paper can be considered as an effort to develop a mean field game analog of the viability formulation of the viscosity solution to the Hamilton-Jacobi equations arising in the theory of the zero-sum differential game (see \cite{frankowska}, \cite{Subb_book}, \cite{vinter_wolenski}, \cite{wolenski}).

We restrict our attention to the deterministic mean field game. Moreover, for simplicity, we assume the periodic boundary conditions i.e. the phase space for each player is the $d$-dimensional torus $\td\triangleq \rd/\mathbb{Z}^d$. We follow the framework of the  probabilistic approach. In this case  the study of the deterministic mean field game is reduced to the study of the symmetric of equilibrium of the continuum player game where the dynamics of each agent is \begin{equation}\label{sys:dynamics}
\frac{d}{dt}x(t)=f(t,x(t),m(t),u(t,x(t))).
\end{equation} Here $m(t)$ is a probability on $\td$ describing the distribution of all players at time $t$, $t\in [0,T]$, $x(t)\in\td$, $u(t,x(t))\in U$,  $U$ is a control space.   We assume that the each player  aims to maximize his/her payoff given by
\begin{equation}\label{sys:payoff}
\sigma(x(T),m(T))+\int_{t_0}^Tg(t,x(t),m(t),u(t,x(t)))dt.
\end{equation}

To present the main objective of the paper let us consider the finite player game analogy. If one examine the $N$ player non-cooperative differential game with weakly coupled dynamics of  agents, then the state  space is $(\td)^N$, while the players' outcome is a $N$-dimensional vector. In this finite player game the value function (multifunction)  is the mapping which assigns to each initial position the set of Nash values. 

Turning  to the continuum player game, we get that the phase space should be $(\td)^{\mathfrak{c}}$, while the players' outcome is an element of $\mathbb{R}^\mathfrak{c}$. Here $\mathfrak{c}$ is a continuum set. In the mean field game setting we can reduce the phase space of the game to the set of  probabilities on $\td$. Further, since we are seeking for the symmetric equilibrium, the players starting at the same point get the same outcome. Thus, we can index the players by the points of $\td$ i.e. we put $\mathfrak{c}=\td$. Simultaneously, given a solution of the mean field game, the  expected reward of the representative player depends on his/her initial state continuously. Therefore, the mean field game analog of the value function is the mapping which assigns to an initial time $t_0$ and an initial players' distribution $m_0$ a set of  continuous functions from $\td$ to $\mathbb{R}$. Each element of this set is a value function   for the representative player corresponding to a solution of the mean field game with the initial condition $m(t_0)=m_0$.  

The link between the master equation and the approach developed in the paper is as follows. If the function $\varphi(t,x,m)$ solves the master equation for the mean field game, then the  mapping $(t,m)\mapsto \{\varphi(t,\cdot,m)\}$ is a value multifunction in the sense of the paper. Conversely,   under certain regularity conditions those include the uniqueness of the  solution of the mean field game, the value multifunction is single-valued and  provides a solution of the master equation for mean field game. However, as it was mentioned above, we can not restrict our attention to single-valued functions due to  the multiplicity of the solutions of the mean field games \cite{Bardi_Fischer_2017}.

To examine the value multifunction we introduce a family of set-valued mappings  $\Psi^{r,s}:\mathcal{P}^1(\td)\times C(\td)\rightrightarrows \mathcal{P}^1(\td)\times C(\td)$   ($s,r\in [0,T]$, $r\geq s$). Here $\ptd$ stands for the set of probabilities on $\td$. Given  $m\in \mathcal{P}^1(\td)$, $\phi\in C(\td)$, $\Psi^{r,s}(m,\phi)$ is the set of pairs $(\mu,\psi)$ such that $\phi$ is a reward of the representative player corresponding to a solution of the mean field game on $[s,r]$ with  dynamics (\ref{sys:dynamics}), the initial condition $m(s)=m$ and the payoff functional of each agent given by 
$$\psi(x(r))+\int_s^rg(t,x(t),m(t),u(t,x(t)))dt, $$ while the probability $\mu$ is the corresponding distribution of agents at the time $r$. The family of transforms $\{\Psi^{r,s}\}$ determines a forward dynamics on $\mathcal{P}^1(\td)\times C(\td)$. Below we call it a mean field game dynamics. Apparently, the notion of mean field game dynamics is  close to the forward-forward mean field games studied in \cite{Gomes_forward}.  

We prove that if a multifunction is viable with respect to the mean field game dynamics, then this multifunction is a value multifunction. Furthermore, we study the infinitesimal form of the  proposed viability condition. To this end we introduce the set-valued derivative of the multifunction $\mathcal{V}:[0,T]\times\mathcal{P}^1(\td)\rightrightarrows C(\td)$ by virtue of the mean field game dynamics. The viability theorem proved in the paper states that the multifunction $\mathcal{V}$ is viable with respect to the dynamics  if and only if the set-valued derivative is nonempty at any point of the graph of $\mathcal{V}$.

The paper is organized as follows. General notation and assumption are introduced in Section~\ref{sect:preliminaries}. Moreover, in this section we give some properties of the dynamics of distribution of players.  Section~\ref{sect:solution_def} is concerned with the definition of solution to the first-order mean field game. In Section~\ref{sect:value_multifunction} we introduce the notion of the value multifunction and formulate the sufficient condition for a given multifucntion of  time and probability to be a value multifunction. The condition involves the viability property with respect to  the mean field game dynamics. The viability theorem which provides the infinitesimal form of the viability property  is introduced in Section~\ref{sect:viability_theorem}. The subsequent sections are devoted to the proof of this theorem. Auxiliary lemmas are given in Section~\ref{sect:auxiliary}. The sufficiency and necessity parts of the viability theorem are proved in Sections~\ref{sect:proof_sufficiency}~and~\ref{sect:proof_necissity} respectively. The proof of  some properties of  dynamical systems in the space of probability measures 
is given in Appendix A. Finally, Appendix B provides the list of main notation.

\section{Preliminaries}\label{sect:preliminaries}
\subsection{General notations}\label{subsect:notation}
If $(X,\rho_X)$ is a separable metric space, $\Upsilon\subset X$, $x\in X$, then put
$$\mathrm{dist}(x,\Upsilon)\triangleq \inf\{\rho_X(x,y):y\in\Upsilon\}. $$  Below $\mathcal{B}(X)$ stands for the Borel $\sigma$-algebra on $X$. We denote by $\mathcal{P}(X)$ the set of all Borel probabilities on $X$.
Further, let $\mathcal{P}^1(X)$ stand for the set of probabilities $m$ on $X$ such that, for some $x_*\in X$,
$$\int_X\rho_X(x,x_*)m(dx)<\infty. $$ We endow $\mathcal{P}^1(X)$ with the Kantorovich-Rubinstein metric (1-Wasserstein metric) defined by the rule: for $m_1,m_2\in \mathcal{P}^1(X)$,
\begin{equation}\label{intro:wass_distance}
\begin{split}
W_1(m_1,m_2) &\triangleq\inf\left\{\int_{X\times X}\rho_X(x_1,x_2)\pi(d(x_1,x_2)):\pi\in \Pi(m_1,m_2)\right\} \\
&=\sup\left\{\int_X \phi(x) m_1(dx)-\int_X\phi(x)m_2(dx):\phi\in{\rm Lip}_1(X)\right\}.
\end{split}
\end{equation} Here $\Pi(m_1,m_2)$ is the set of probabilities $\pi$ on $X\times X$ such that its marginal distributions are $m_1$ and $m_2$ respectively, i.e.
$$\Pi(m_1,m_2)=\{\pi\in\mathcal{P}(X\times X):\pi(\Upsilon\times X)=m_1(\Upsilon),\ \ \pi(X\times \Upsilon)=m_2(\Upsilon)\}; $$
${\rm Lip}_K(X)$ denotes the set of $K$-Lipschitz continuous functions $\phi:X\rightarrow \mathbb{R}$. Note that, if $X$ is compact, then the Wasserstein distance metrizes the narrow convergence and $\mathcal{P}^1(X)$ is also compact.

If $(\Omega^1, \Sigma^1)$ and $(\Omega^2, \Sigma^2)$ are measurable spaces, $h:\Omega^1\mapsto\Omega^2$ is measurable, $m$ is a probability of $\Sigma^1$, then denote by $h_\# m$ the probability on $\Sigma^2$ given by the rule: for $\Upsilon\in \Sigma^2$,
\begin{equation}\label{intro:push_forward}
(h_\# m)(\Upsilon)\triangleq m(h^{-1}(\Upsilon)). 
\end{equation}

If $(X,\rho_X)$, $(Y,\rho_Y)$ are separable metric spaces,  $\pi\in\mathcal{P}(X\times Y)$, then denote by $\pi(\cdot|x)$ a conditional distribution on $Y$ given $x$ i.e., for each $x$, $\pi(\cdot|x)$ is a probability on $Y$ and, for any $\varphi\in C_b(X\times Y)$,
\begin{equation}\label{intro:disintegration}
\int_{X\times Y}\varphi(x,y)\pi(d(x,y))=\int_X\int_Y\varphi(x,y)\pi(dy|x)m(dx).  
\end{equation} Here $m$ is a marginal distribution on $X$ of $\pi$.

Now, let $(X,\rho_X)$, $(Y,\rho_Y)$ and $(Z,\rho_Z)$ be separable metric spaces, and let $\pi_{1,2}\in\mathcal{P}(X\times Y)$ and $\pi_{2,3}\in\mathcal{P}(Y\times Z)$ have the same marginal distribution on $Y$ equal to $m$. We define the probability $\pi_{1,2}*\pi_{2,3}\in\mathcal{P}(X\times Z)$  by the following rule: for $\varphi\in C_b(X\times Z)$,
\begin{equation}\label{intro:composition}
\int_{X\times Y}\varphi(x,z)(\pi_{1,2}*\pi_{2,3})(d(x,z))
\triangleq   \int_{Y}\int_X\int_Z\varphi(x,z)\pi_{2,3}(dz|y)\pi_{1,2}(dx|y)m(dy).
\end{equation} The probability $\pi_{1,2}*\pi_{2,3}$ is the composition of $\pi_{1,2}$ and $\pi_{2,3}$. Note that in \cite{Ambrosio} it is denoted by $\pi_{2,3}\circ\pi_{1,2}$.

If $\mathcal{Y}$ is a multifunction from $[s,r]$ to finite dimensional euclidean space, then $\int_s^r\mathcal{Y}(t)dt$ stands for the Aumann integral i.e.
$$\int_s^r\mathcal{Y}(t)dt\triangleq \left\{\int_s^r y(t)dt:y(\cdot)\text{ integrable, }y(t)\in \mathcal{Y}(t)\text{ a.e. }\right\}. $$

\subsection{Probabilities on state space and on space of motions}\label{subsect:state}

As it was mentioned above, we assume that the phase space is $d$-dimensional torus   $\mathbb{T}^d=\mathbb{R}^d/\mathbb{Z}^d$.  Further, $\mathbb{T}^d\times\mathbb{R}$ is an extended phase space; $\mathrm{p}$ denotes a natural projection from $\mathbb{T}^d\times\mathbb{R}$ onto $\mathbb{T}^d$. 
If $(x,z)\in \mathbb{T}^d\times\mathbb{R}$, then
$$\|(x,z)\|\triangleq \|x\|+|z|.$$

Notice that the set $\mathcal{C}_{s,r}\triangleq C([s,r],\tdr)$ is the set of motions on $[s,r]$ in the extended space. 

Let $e_t:\mathcal{C}_{s,r}\rightarrow \td$ and $\hat{e}_t:\mathcal{C}_{s,r}\rightarrow \tdr$ be evaluation operators defined by the rules: if $w(\cdot)=(x(\cdot),z(\cdot))\in\mathcal{C}_{s,r}$, then \begin{equation}\label{intro:evaluation_operators}
e_t(w(\cdot))\triangleq x(t),\ \ \hat{e}_t(w(\cdot))\triangleq w(t).
\end{equation} Notice  that $e_t=\mathrm{p}\circ \hat{e}_t$.

Furthermore, if $s,r\in [0,T]$, $s<r$, $t\in [s,r]$, $\chi_1,\chi_2\in\pc{s}{r}$, then
\begin{equation}\label{ineq:Wasserstein_e_t}
W_1(e_t{}_\#\chi_1,e_t{}_\#\chi_2)\leq W_1(\hat{e}_t{}_\#\chi_1,\hat{e}_t{}_\#\chi_2)\leq W_1(\chi_1,\chi_2).
\end{equation}

Denote the set of all measurable functions from  $[s,t]$ to $\ptd$ by $\mathcal{M}_{s,t}$. Analogously, let $\mathcal{N}_{s,r}$ stand for the set of all measurable functions defined on $[s,r]$ taking values in $\mathcal{P}^1(\tdr)$. We will call elements of both $\mathcal{M}_{s,r}$ and $\mathcal{N}_{s,r}$ flows of probabilities. 

For $m\in\ptd$,  denote by $\widehat{m}$ its lifting to $\mathcal{P}^1(\tdr)$ defined by the rule: for $\varphi\in C_b(\tdr)$,
\begin{equation}\label{intro:lifting_m}
\int_{\tdr}\varphi(x,z)\widehat{m}(d(x,z))\triangleq \int_{\td}\varphi(x,0)m(dx).
\end{equation}

If $\phi\in C(\td)$ and $\nu\in \mathcal{P}^1(\tdr)$, then denote by $[\phi,\nu]$ the averaging of the function $(x,z)\mapsto \phi(x)+z$ according to $\nu$ i.e.
\begin{equation}\label{intro:action}
[\phi,\nu]\triangleq \int_{\tdr}(\phi(x)+z)\nu(d(x,z)).
\end{equation} 
\begin{remark}\label{remark:formula}
	If $\nu=\hat{e}_s{}_\#\chi$, then
	$$[\phi,\nu]=\int_{\mathcal{C}_{s,r}}(\phi(x(s))+z(s))\chi(d(x(\cdot),z(\cdot))). $$
\end{remark}

The definition of $[\phi,\nu]$ implies that, if $\phi,\phi'\in\mathrm{Lip}_K(\td)$, $\nu,\nu'\in\mathcal{P}^1(\tdr)$, then
\begin{equation}
\label{estima:action_continuous}
\Bigl|[\phi,\nu]-[\phi',\nu']\Bigr|\leq \|\phi-\phi'\|+KW_1(\nu,\nu').
\end{equation}

Now let us introduce the notion of concatenations of the  probabilities on  the set of motions. First, we recall the notion of concatenation of motions. Let $s<r<\theta$. If $w_1(\cdot)\in\mathcal{C}_{s,r}$, $w_2(\cdot)\in\mathcal{C}_{r,\theta}$ are such that $w_1(r)=w_2(r)$, then $w_1(\cdot)\odot w_2(\cdot)$ is a motion $w(\cdot)\in\mathcal{C}_{s,\theta}$ given by
\begin{equation}\label{intro:concatination_mothions}
w(t)=\left\{\begin{array}{cc}
w_1(t), & t\in [s,r], \\
w_2(t), & t\in [r,\theta].
\end{array}\right. 
\end{equation}
Now, let $\chi_1\in\mathcal{P}^1(\mathcal{C}_{s,r})$, $\chi_2\in\mathcal{P}^1(\mathcal{C}_{r,\theta})$ be such that $\hat{e}_r{}_\#\chi_1=\hat{e}_r{}_\#\chi_2$. Let $\chi_2(d(w(\cdot))|w_0)$ denote the disintegration of $\chi_2$ along $\hat{e}_r{}_\#\chi_2$. Define the concatenation of probabilities $\chi_1$ and $\chi_2$ $\chi\triangleq\chi_1\odot\chi_2$ by the following rule: for any $\varphi\in C_b(\mathcal{C}_{s,\theta})$,
\begin{equation}\label{intro:concatination}
\int_{\mathcal{C}_{s,\theta}}\varphi(w(\cdot))\chi(d(w(\cdot)))=
\int_{\mathcal{C}_{s,r}}\int_{\mathcal{C}_{r,\theta}}\varphi(w_1(\cdot)\odot w_2(\cdot))\chi_2(d(w_2(\cdot))|w_1(r))\chi_1(d(w_1(\cdot))).
\end{equation}

\subsection{Dynamics of distribution of players}\label{subsect:dynamics}
We assume that the set $U$ and the functions $f$, $g$, $\sigma$ satisfy the following assumptions.
\begin{list}{(M\arabic{tmp})}{\usecounter{tmp}}
	\item $U$ is a metric compact;
	\item functions $f$, $g$ and $\sigma$ are continuous;
	\item\label{cond:alpha_cont} there exists a function $\alpha:\mathbb{R}\rightarrow [0,+\infty)$ such that $\alpha(\delta)\rightarrow 0$ as $\delta\rightarrow 0$ and, for any $t',t''\in [0,T]$, $x\in\td$, $m\in\ptd$, $u\in U$, $$\|f(t',x,m,u)-f(t'',x,m,u)\|\leq\alpha(t'-t''),$$
	$$|g(t',x,m,u)-g(t'',x,m,u)|\leq\alpha(t'-t'').$$
	\item\label{cond:lipschitz} $f$ and $g$ are Lipschitz continuous with respect to the space variable $x$ and probability $m$, i.e. there exists a constant $L$ such that, for any $t\in [0,T]$, $x',x''\in \mathbb{T}^d$, $m',m''\in\ptd$, $u\in U$,
	$$\|f(t,x',m',u)-f(t,x'',m'',u)\|\leq L(\|x'-x''\|+W_1(m',m'')), $$
	$$|g(t,x',m',u)-g(t,x'',m'',u)|\leq L(\|x'-x''\|+W_1(m',m''));$$
	\item\label{cond:lip+sigma} there exists a constant $\varkappa$ such that, for any $x',x''\in\mathbb{T}^d$ and any $m\in\ptd$,
	$$|\sigma(x',m)-\sigma(x'',m)|\leq \varkappa\|x'-x''\|. $$
	
\end{list}

Conditions (M1)--(M5) imply the existence of a constant $R$ such that
\begin{equation}\label{intro:R}
\|f(t,x,m,u)\|,|g(t,x,m,u)|\leq R
\end{equation}

Using the relaxation of the control problem for the representative player (see~\cite{Warga}), we get that his/her dynamics in the extended phase space obeys the following differential inclusion:
\begin{equation}\label{incl:F}
(\dot{x}(t),\dot{z}(t))\in F(t,x(t),m(t)).  
\end{equation} Here
\begin{equation}\label{intro:F}
F(t,x,m)\triangleq \mathrm{co}\{(f(t,x,m,u),g(t,x,m,u)):u\in U\}.
\end{equation}
An equivalent approach to the relaxation of control problems is based on measure-valued controls \cite{Warga}. 
A measure-valued control is a function $\xi:[s,r]\times\mathcal{B}(U)\rightarrow [0,1]$ satisfying the following conditions:
\begin{itemize}
	\item for each $t\in [s,r]$, $\xi(t,\cdot)$ is a probability on $U$;
	\item for any $\varphi\in C(U)$, the functions $$t\mapsto\int_U\varphi(u)\xi(t,du)$$ is measurable.
\end{itemize} We denote the set of measure-valued controls on $[s,r]$ by $\mathcal{U}_{s,r}$. Notice (see \cite{Warga}) that under conditions (M1)--(M5), if $(x(\cdot),z(\cdot))\in \mathcal{C}_{s,r}$ satisfies differential inclusion (\ref{incl:F}), then there exists $\xi\in\mathcal{U}_{s,r}$ such that, for a.e. $t\in [s,r]$,
\begin{equation}\label{eq:xi_F}
\dot{x}(t)= \int_U f(t,x(t),m(t),u)\xi(t,du),\ \ \dot{z}(t)= \int_U g(t,x(t),m(t),u)\xi(t,du).
\end{equation}

If $s,r\in [0,T]$, $s<r$, $y\in \td$, $m(\cdot)\in\mathcal{M}_{s,r}$, then denote by $\mathrm{Sol}(r,s,y,m(\cdot))$ the set of solution of (\ref{incl:F}) satisfying $x(s)=y$.
Further, put
\begin{equation}\label{introl:SOL}
\mathrm{SOL}(r,s,m(\cdot))\triangleq \bigcup_{y\in\td}\mathrm{Sol}(r,s,y,m(\cdot)). 
\end{equation}

Notice that, for any $s,r\in [0,T]$, $m(\cdot)\in \mathcal{M}_{s,r}$, each $(x(\cdot),z(\cdot))\in\mathrm{SOL}(r,s,m(\cdot))$ is absolutely continuous and 
\begin{equation}\label{estima:dot_x_dot_z}
\|\dot{x}(t)\|,\ \ |\dot{z}(t)|\leq R\mbox{ a.e. }t\in [s,r].
\end{equation}

Integrating (\ref{incl:F}), we get that the dynamics  of distribution of players in the extended space can be described by the following mean field type differential inclusion:
\begin{equation}\label{system:mfdi}
\frac{d}{dt}\nu(t)\in \langle \widehat{F}(t,\cdot,\nu(t)),\nabla\rangle \nu(t), 
\end{equation} where $\widehat{F}(t,w,\nu)$ is defined by the rule
\begin{equation}\label{intro:hat_F}
\widehat{F}(t,w,\nu)\triangleq \mathrm{co}\{(f(t,\mathrm{p}(w),\mathrm{p}_\#\nu,u),f(t,\mathrm{p}(w),\mathrm{p}_\#\nu,u)):u\in U\}. 
\end{equation}

In the general form the notion of solution to the mean field type differential inclusion can be introduced as follows.
\begin{definition}\label{def:solution_mfdi} Let $X$ be a finite dimensional Euclidean space. Further, 
	let $G(t,w,\nu)$ be a multivalued function defined on $[0,T]\times X\times\mathcal{P}^1(X)$ with values in $X$. We say that the function $[s,r]\ni t\mapsto \nu(t)\in\mathcal{P}^1(X)$ solves the mean field type differential inclusion (shortly, MFDI)
	$$\frac{d}{dt}\nu(t)\in \langle G(t,\cdot,\nu(t)),\nabla\rangle \nu(t), $$ if there exists a probability $\chi\in \mathcal{P}^1(C[s,r],X)$ such that $\nu(t)$ is an evaluation of $\chi$ at time $t$, and  $\chi$-a.e. $w(\cdot)\in C([s,r],X)$ satisfies the differential inclusion
	$$\frac{d}{dt}w(t)\in G(t,w(t),\nu(t)). $$
\end{definition}

\begin{remark}\label{remark:mfdi}
	Notice that  $\nu(\cdot)$ solves mean field type differential inclusion (\ref{system:mfdi}) on $[s,r]$ if and only if there exists $\chi\in \mathcal{P}^1(\mathcal{C}_{s,r})$ such that $\nu(t)=\hat{e}_t{}_\#\chi$ and $\mathrm{supp}(\chi)\subset \mathrm{SOL}(r,s,m(\cdot))$ for $m(t)=\mathrm{p}_\#\nu(t)$.
\end{remark}
Using the same methods as in \cite{Sznitman}, one can prove the existence of at least one solution to (\ref{system:mfdi}) satisfying $\nu(s)=\nu_*$.

Now let us list the properties of the solutions to mean field type differential inclusion (\ref{system:mfdi}). They are proved in the Appendix A.
\begin{proposition}\label{prop:shift}
	Let $\nu(\cdot)\in \mathcal{N}_{s,r}$ solve (\ref{system:mfdi}), and let $\nu_*\in\mathcal{P}^1(\tdr)$ be such that $\mathrm{p}_\#\nu(s)=\mathrm{p}_\#\nu_*$.  Then there exists a flow of probabilities $\bar{\nu}(\cdot)\in \mathcal{N}_{s,r}$ such that
	\begin{enumerate}
		\item $\bar{\nu}(\cdot)$ solves (\ref{system:mfdi});
		\item $\mathrm{p}_\#\nu(t)=\mathrm{p}_\#\bar{\nu}(t)$;
		\item $\nu(s)=\nu_*$;
		\item for any $\phi\in C(\td)$, and any $t\in [s,r]$, 
		$$[\phi,\bar{\nu}(t)]=[\phi,\nu(t)]-\int_{\tdr}z\nu(s,d(x,z))+\int_{\tdr}z\nu_*(d(x,z)). $$
	\end{enumerate} 
\end{proposition}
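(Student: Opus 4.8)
The plan is to represent the given solution $\nu(\cdot)$ by a probability $\chi$ on the path space $\mathcal{C}_{s,r}$ (via Remark~\ref{remark:mfdi}), modify each trajectory only in its $z$-component by a suitable time-independent shift depending on the starting point, and then push everything forward to obtain the desired $\bar\nu(\cdot)$. Concretely, since $\mathrm{p}_\#\nu(s)=\mathrm{p}_\#\nu_*=:m$, I would first disintegrate both $\nu(s)$ and $\nu_*$ along $m$: write $\nu(s)(d(x,z)) = \nu(s)(dz\,|\,x)\,m(dx)$ and $\nu_*(d(x,z))=\nu_*(dz\,|\,x)\,m(dx)$. The key quantity is the \emph{difference of conditional means} $a(x) \triangleq \int_{\mathbb R} z'\,\nu_*(dz'\,|\,x) - \int_{\mathbb R} z\,\nu(s)(dz\,|\,x)$, which is $m$-measurable and bounded (it is a Borel function of $x$; measurability is what needs a word of justification, coming from the measurable-selection/disintegration theory already invoked in Section~\ref{subsect:notation}).

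Next I would build $\bar\chi \in \mathcal{P}^1(\mathcal{C}_{s,r})$ as follows. Take $\chi$ with $\hat e_t{}_\#\chi = \nu(t)$ and $\mathrm{supp}(\chi)\subset \mathrm{SOL}(r,s,m(\cdot))$, $m(t)=\mathrm{p}_\#\nu(t)$. To also match the full initial distribution $\nu_*$ (not merely its conditional means) I would further \emph{reshuffle the $z$-coordinate at time $s$}: disintegrating $\chi$ along $\hat e_s$ and then along $e_s$, replace the law of $z(s)$ given $x(s)=x$ by $\nu_*(dz\,|\,x)$, and translate the entire $z$-trajectory rigidly so that it starts from this new value. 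Equivalently, define the map $\Theta_c : \mathcal{C}_{s,r}\to\mathcal{C}_{s,r}$, $\Theta_c(x(\cdot),z(\cdot)) = (x(\cdot), z(\cdot)-z(s)+c)$, and set $\bar\chi$ to be the image of $\chi \otimes (\text{new conditional } z(s))$ under $(x(\cdot),z(\cdot),c)\mapsto \Theta_c(x(\cdot),z(\cdot))$. Put $\bar\nu(t)\triangleq \hat e_t{}_\#\bar\chi$.

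Then I would verify the four conclusions. For (1): since $\Theta_c$ only translates the $z$-component by a constant, it does not change $\dot z(t)$, and $\dot x(t)$ is untouched; hence every path in $\mathrm{supp}(\bar\chi)$ still satisfies $(\dot x,\dot z)\in F(t,x,m(t))$ with the \emph{same} $m(t)=\mathrm{p}_\#\nu(t)=\mathrm{p}_\#\bar\nu(t)$, so by Remark~\ref{remark:mfdi} $\bar\nu(\cdot)$ solves (\ref{system:mfdi}); this also gives (2), because $e_t = \mathrm p\circ\hat e_t$ and the $\mathrm p$-marginal is preserved by $\Theta_c$. For (3): at $t=s$ the construction makes $z(s)$ have conditional law $\nu_*(\cdot\,|\,x)$ given $x(s)=x$, while $x(s)$ has law $m$, so $\hat e_s{}_\#\bar\chi=\nu_*$. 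For (4): using Remark~\ref{remark:formula}, $[\phi,\bar\nu(t)] = \int_{\mathcal{C}_{s,r}}(\phi(x(t))+z(t))\,\bar\chi(d(x(\cdot),z(\cdot)))$; writing $z(t) = z(s) + \int_s^t \dot z(\tau)\,d\tau$ and noting that under $\Theta_c$ the increment $\int_s^t\dot z$ is unchanged while $z(s)$ is replaced (in conditional mean) according to $a(x)$, the difference $[\phi,\bar\nu(t)]-[\phi,\nu(t)]$ collapses to $\int a(x)\,m(dx) = \int z\,\nu_*(d(x,z)) - \int z\,\nu(s,d(x,z))$, independent of $t$ and of $\phi$, which is exactly the claimed identity.

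The main obstacle is the bookkeeping around disintegration and measurability: one must check that the reshuffled kernel $x\mapsto\nu_*(\cdot\,|\,x)$ combined with the conditional path-law of $\chi$ given $x(s)=x$ genuinely defines an element of $\mathcal{P}^1(\mathcal{C}_{s,r})$ (joint measurability in $x$ of both kernels, and the $\mathcal{P}^1$ moment bound, which follows from $\nu_*\in\mathcal{P}^1(\tdr)$ together with the uniform bound $|\dot z|\le R$ from (\ref{estima:dot_x_dot_z})), and that the resulting $\bar\chi$ indeed has $\hat e_s{}_\#\bar\chi=\nu_*$ and $\hat e_t{}_\#\bar\chi$ equal to the $z$-translated version of $\nu(t)$. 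Everything else is a direct computation once the path-space representation is set up; the deferral of this proposition to Appendix~A is presumably precisely to handle these measure-theoretic details carefully.
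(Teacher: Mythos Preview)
Your proposal is correct and follows essentially the same route as the paper's proof in Appendix~A: represent $\nu(\cdot)$ by a path measure $\chi$, disintegrate along $e_s$, and replace each $z$-trajectory by its rigid translate $z(\cdot)-z(s)+z_*$ with $z_*$ drawn independently from $\nu_*(\cdot\,|\,x)$, then set $\bar\nu(t)=\hat e_t{}_\#\bar\chi$. The paper writes this as a single integral formula for $\bar\chi$ and verifies (1)--(4) by exactly the computations you sketch; your preliminary introduction of the conditional-mean shift $a(x)$ is a small detour (only the full reshuffle gives item~(3)), but you correctly land on the same construction and the same verification of~(4).
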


\begin{proposition}\label{prop:concatination}
	If $s_0,s_1,s_2\in [0,T]$, $s_0<s_1<s_2$, $\nu_1(\cdot)\in \mathcal{N}_{s_0,s_1}$, $\nu_2\in\mathcal{N}_{s_1,s_2}$ solve (\ref{system:mfdi}) and $\nu_1(s_1)=\nu_2(s_2)$, then
	$$\nu(t)\triangleq \left\{\begin{array}{cc}
	\nu_1(t), & t\in [s_0,s_1];\\
	\nu_2(t), & t\in [s_1,s_2]
	\end{array}\right. $$ is also a solution to (\ref{system:mfdi}).
\end{proposition}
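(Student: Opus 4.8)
The statement to prove is Proposition~\ref{prop:concatination}: the concatenation of two solutions of the MFDI (\ref{system:mfdi}) is again a solution. Here is how I would approach it.

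\medskip

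The plan is to work directly from the characterization of solutions given in Remark~\ref{remark:mfdi}: a flow $\nu(\cdot)$ solves (\ref{system:mfdi}) on $[s,r]$ if and only if there is $\chi\in\mathcal{P}^1(\mathcal{C}_{s,r})$ with $\nu(t)=\hat{e}_t{}_\#\chi$ and $\mathrm{supp}(\chi)\subset\mathrm{SOL}(r,s,\mathrm{p}_\#\nu(\cdot))$. So by hypothesis we have $\chi_1\in\mathcal{P}^1(\mathcal{C}_{s_0,s_1})$ and $\chi_2\in\mathcal{P}^1(\mathcal{C}_{s_1,s_2})$ representing $\nu_1$ and $\nu_2$ respectively. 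First I would observe that $\hat e_{s_1}{}_\#\chi_1 = \nu_1(s_1) = \nu_2(s_1) = \hat e_{s_1}{}_\#\chi_2$, which is exactly the compatibility condition needed to form the concatenation $\chi\triangleq\chi_1\odot\chi_2\in\mathcal{P}^1(\mathcal{C}_{s_0,s_2})$ via (\ref{intro:concatination}). The candidate representing measure for $\nu(\cdot)$ is this $\chi$, and the proof reduces to two verifications.

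\medskip

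Second, I would check that $\hat e_t{}_\#\chi = \nu(t)$ for every $t\in[s_0,s_2]$. For $t\in[s_0,s_1]$, the evaluation $\hat e_t$ only depends on the restriction of a path to $[s_0,s_1]$, so applying (\ref{intro:concatination}) with $\varphi(w(\cdot))=\psi(w(t))$ (test function depending only on the value at time $t$) and integrating out the $\mathcal{C}_{s_1,s_2}$-variable, one gets $\hat e_t{}_\#\chi = \hat e_t{}_\#\chi_1 = \nu_1(t) = \nu(t)$. For $t\in[s_1,s_2]$, the value $w_1(\cdot)\odot w_2(\cdot)$ at time $t$ equals $w_2(t)$, so the inner integral against $\chi_2(\cdot\,|\,w_1(s_1))$ together with the disintegration identity yields $\hat e_t{}_\#\chi = \hat e_t{}_\#\chi_2 = \nu_2(t) = \nu(t)$. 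In particular $\mathrm{p}_\#\nu(\cdot)$ is the function built by gluing $m_1(\cdot)=\mathrm{p}_\#\nu_1(\cdot)$ and $m_2(\cdot)=\mathrm{p}_\#\nu_2(\cdot)$, which is measurable, so $\nu(\cdot)\in\mathcal{N}_{s_0,s_2}$.

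\medskip

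Third, I would verify the support condition: $\chi$-a.e.\ path $w(\cdot)\in\mathcal{C}_{s_0,s_2}$ lies in $\mathrm{SOL}(s_2,s_0,\mathrm{p}_\#\nu(\cdot))$, i.e.\ solves the differential inclusion (\ref{incl:F}) on $[s_0,s_2]$ with the glued measure flow. By construction (\ref{intro:concatination}), $\chi$-a.e.\ such path is of the form $w_1(\cdot)\odot w_2(\cdot)$ where $w_1(\cdot)$ is $\chi_1$-typical, hence solves (\ref{incl:F}) on $[s_0,s_1]$ with $m(t)=m_1(t)$, and $w_2(\cdot)$ is typical for $\chi_2(\cdot\,|\,w_1(s_1))$ for $(\hat e_{s_1}{}_\#\chi_2)$-a.e.\ starting point, hence (after a Fubini/disintegration argument showing that for $\chi_1$-a.e.\ $w_1$ the conditional measure $\chi_2(\cdot\,|\,w_1(s_1))$ is supported on solutions) solves (\ref{incl:F}) on $[s_1,s_2]$ with $m(t)=m_2(t)$. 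Since $m_1$ and $m_2$ agree with the glued flow on the respective subintervals and the two pieces match at $s_1$ by the definition of concatenation of motions (\ref{intro:concatination_mothions}), the glued path is absolutely continuous on $[s_0,s_2]$ with $(\dot x(t),\dot z(t))\in F(t,x(t),\mathrm{p}_\#\nu(t))$ a.e.; the only point to note is that the derivative at the single time $s_1$ is irrelevant. This puts $\mathrm{supp}(\chi)$ inside $\mathrm{SOL}(s_2,s_0,\mathrm{p}_\#\nu(\cdot))$, and Remark~\ref{remark:mfdi} then gives that $\nu(\cdot)$ solves (\ref{system:mfdi}).

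\medskip

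The main obstacle is the measure-theoretic bookkeeping in the third step: one must pass from "$\chi_2$ is concentrated on solutions" to "for $(\hat e_{s_1}{}_\#\chi_2)$-a.e.\ $y$, the disintegration $\chi_2(\cdot\,|\,y)$ is concentrated on solutions starting at $y$", and then glue this with the $\chi_1$-typicality of $w_1$ through the defining formula (\ref{intro:concatination}). This is a routine but slightly delicate application of disintegration together with the fact that the set of solutions $\mathrm{SOL}$ is a Borel (indeed closed, by (M1)--(M5) and Arzelà--Ascoli via (\ref{estima:dot_x_dot_z})) subset of path space, so that "concentrated on solutions" is a genuinely measurable event to which Fubini applies. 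Everything else—the evaluations, the absolute continuity of the glued path, the matching at $s_1$—is immediate from the definitions.
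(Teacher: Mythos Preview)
Your proposal is correct and follows essentially the same approach as the paper: pick representing measures $\chi_1,\chi_2$, form $\chi=\chi_1\odot\chi_2$, and check that $\hat e_t{}_\#\chi=\nu(t)$ and $\mathrm{supp}(\chi)\subset\mathrm{SOL}(s_2,s_0,\mathrm{p}_\#\nu(\cdot))$. The paper's proof is much terser---it simply asserts the support inclusion once the glued flow $m(\cdot)$ is identified---whereas you spell out the disintegration bookkeeping and the closedness of $\mathrm{SOL}$; this extra care is justified but not a different route.
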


\begin{proposition}\label{prop:limit} Let $\{\tau_i\}_{i=1}^\infty\subset [s,r]$, $\tau\in [s,r]$, $\{\nu_i(\cdot)\}_{i=1}^\infty\subset \mathcal{N}_{s,r}$, $\{\nu^\natural_i\}_{i=1}^\infty\subset \mathcal{P}^1(\tdr)$, $\nu^\natural\in\mathcal{P}^1(\tdr)$. Assume that, for each $i$, $\nu_i(\cdot)$ solves (\ref{system:mfdi}) and satisfies $\nu_i(\tau_i)=\nu_i^\natural$. Additionally, assume that $\tau_i\rightarrow\tau$, $W_1(\nu_i^\natural,\nu^\natural)\rightarrow 0$ as $i\rightarrow\infty$. Then there exist a sequence $\{i_k\}$ and a flow of probabilities $\nu^*(\cdot)\in\mathcal{N}_{s,r}$ such that
	\begin{enumerate}
		\item $\nu^*(\cdot)$ solves (\ref{system:mfdi});
		\item $\nu^*(\tau)=\nu^\natural$;
		\item $$\lim_{k\rightarrow \infty}\sup_{t\in [s,r]}W_1(\nu_{i_k}(t),\nu^*(t))=0. $$
	\end{enumerate}
	
\end{proposition}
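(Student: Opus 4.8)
The plan is to lift the problem to the space of motions $\mathcal{C}_{s,r}$ and argue by compactness. By Remark~\ref{remark:mfdi} I may choose, for every $i$, a probability $\chi_i\in\mathcal{P}^1(\mathcal{C}_{s,r})$ with $\nu_i(t)=\hat{e}_t{}_\#\chi_i$ and $\mathrm{supp}(\chi_i)\subset\mathrm{SOL}(r,s,m_i(\cdot))$, where $m_i(t)=\mathrm{p}_\#\nu_i(t)$. By~(\ref{estima:dot_x_dot_z}) every element of $\mathrm{SOL}(r,s,m_i(\cdot))$ is Lipschitz with the common constant $2R$, so each $\chi_i$ is carried by the closed set $\mathcal{L}\subset\mathcal{C}_{s,r}$ of $2R$-Lipschitz motions; in particular $\|w(t)\|\le\|w(\tau_i)\|+2R(r-s)$ for $\chi_i$-a.e.\ $w(\cdot)$ and all $t$. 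Since $\hat{e}_{\tau_i}{}_\#\chi_i=\nu_i(\tau_i)=\nu_i^\natural$ and $W_1(\nu_i^\natural,\nu^\natural)\to0$, the family $\{\nu_i^\natural\}$ is tight and $\{\|\cdot\|\}$ is uniformly integrable with respect to it. Choosing $M_\delta$ so that $\nu_i^\natural(\{\|\cdot\|\le M_\delta\})>1-\delta$ for all $i$, the sets $\mathcal{K}_\delta\triangleq\mathcal{L}\cap\{w(\cdot):\|w(\cdot)\|_\infty\le M_\delta+2R(r-s)\}$ are compact by the Arzel\`a--Ascoli theorem and satisfy $\chi_i(\mathcal{K}_\delta)>1-\delta$ for every $i$; hence $\{\chi_i\}$ is tight, and the same bound together with the uniform integrability of $\{\nu_i^\natural\}$ yields uniform integrability of $w(\cdot)\mapsto\|w(\cdot)\|_\infty$ under $\{\chi_i\}$. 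By Prokhorov's theorem and the characterization of $W_1$-convergence on the Polish space $\mathcal{C}_{s,r}$, some subsequence $\chi_{i_k}$ converges to a probability $\chi^*\in\mathcal{P}^1(\mathcal{C}_{s,r})$ in $W_1$. Put $\nu^*(t)\triangleq\hat{e}_t{}_\#\chi^*$; since $\mathcal{L}$ is closed, $\chi^*$ is also carried by $\mathcal{L}$, so $t\mapsto\nu^*(t)$ is $2R$-Lipschitz and $\nu^*(\cdot)\in\mathcal{N}_{s,r}$.

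Two of the three assertions are then immediate. By~(\ref{ineq:Wasserstein_e_t}),
$$\sup_{t\in[s,r]}W_1(\nu_{i_k}(t),\nu^*(t))=\sup_{t\in[s,r]}W_1(\hat{e}_t{}_\#\chi_{i_k},\hat{e}_t{}_\#\chi^*)\le W_1(\chi_{i_k},\chi^*)\to0,$$
which is the third assertion; the same estimate applied to $e_t$ shows that $m_{i_k}(\cdot)\to m^*(\cdot)$ uniformly on $[s,r]$, where $m^*(t)\triangleq\mathrm{p}_\#\nu^*(t)$. For the second assertion, using that $\chi_{i_k}$ lives on $2R$-Lipschitz paths,
$$W_1(\nu_{i_k}^\natural,\nu^*(\tau))\le W_1(\hat{e}_{\tau_{i_k}}{}_\#\chi_{i_k},\hat{e}_\tau{}_\#\chi_{i_k})+W_1(\hat{e}_\tau{}_\#\chi_{i_k},\hat{e}_\tau{}_\#\chi^*)\le 2R|\tau_{i_k}-\tau|+W_1(\chi_{i_k},\chi^*)\to0,$$
while $\nu_{i_k}^\natural\to\nu^\natural$ by hypothesis, whence $\nu^*(\tau)=\nu^\natural$.

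It remains to prove the first assertion, namely that $\chi^*$ is carried by $\mathrm{SOL}(r,s,m^*(\cdot))$, for then $\nu^*(\cdot)$ solves~(\ref{system:mfdi}) by Remark~\ref{remark:mfdi}. Because $f$ and $g$ are Lipschitz in the state and the measure (condition~(M\ref{cond:lipschitz})) and $F$ is a convex hull over the compact $U$ (see~(\ref{intro:F})), one has $F(t,x,m)\subset F(t,x,m')+L\,W_1(m,m')B$, where $B$ is the closed unit ball of $\mathbb{R}^{d+1}$. Hence, setting $\varepsilon_k\triangleq L\sup_{t\in[s,r]}W_1(m_{i_k}(t),m^*(t))\to0$, every $w(\cdot)=(x(\cdot),z(\cdot))\in\mathrm{supp}(\chi_{i_k})$ satisfies $(\dot{x}(t),\dot{z}(t))\in F(t,x(t),m^*(t))+\varepsilon_k B$ for a.e.\ $t$. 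Introduce, for $\varepsilon\ge0$, the set $\mathcal{A}_\varepsilon$ of absolutely continuous $w(\cdot)=(x(\cdot),z(\cdot))\in\mathcal{C}_{s,r}$ with $(\dot{x}(t),\dot{z}(t))\in F(t,x(t),m^*(t))+\varepsilon B$ a.e.; note $\mathcal{A}_0=\mathrm{SOL}(r,s,m^*(\cdot))$. Since $m^*(\cdot)$ is continuous, $(t,x)\mapsto F(t,x,m^*(t))$ is continuous with nonempty compact convex values and uniformly bounded by~(\ref{intro:R}); by the standard closure theorem for differential inclusions \cite{Aubin_Cellina}, each $\mathcal{A}_\varepsilon$ is closed in $\mathcal{C}_{s,r}$, the family $\{\mathcal{A}_\varepsilon\}$ is nonincreasing in $\varepsilon$, and $\bigcap_{n\ge1}\mathcal{A}_{1/n}=\mathcal{A}_0$ because $F$ has closed values. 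Fixing $\varepsilon>0$ and taking $k$ so large that $\varepsilon_k\le\varepsilon$, we get $\chi_{i_k}(\mathcal{A}_\varepsilon)=1$, so the portmanteau theorem gives $\chi^*(\mathcal{A}_\varepsilon)\ge\limsup_k\chi_{i_k}(\mathcal{A}_\varepsilon)=1$. Letting $\varepsilon\downarrow0$ along $\{1/n\}$ yields $\chi^*(\mathrm{SOL}(r,s,m^*(\cdot)))=1$; as this set is closed, $\mathrm{supp}(\chi^*)\subset\mathrm{SOL}(r,s,m^*(\cdot))$, and Remark~\ref{remark:mfdi} applies.

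The main obstacle is precisely this last step: the paths carrying $\chi_{i_k}$ solve a differential inclusion whose right-hand side is frozen along the varying flows $m_{i_k}(\cdot)$, not along the limit $m^*(\cdot)$, so one cannot pass to the limit directly. Sandwiching between the nested closed sets $\mathcal{A}_\varepsilon$ — made possible by the Lipschitz dependence of $F$ on the measure and by the uniform-in-$t$ convergence $m_{i_k}(\cdot)\to m^*(\cdot)$ read off from~(\ref{ineq:Wasserstein_e_t}) — is what resolves it. A secondary point worth care is that one needs genuine $W_1$-convergence of the subsequence $\{\chi_{i_k}\}$, not merely narrow convergence, for the above $W_1$-estimates to close; this is where the hypothesis $W_1(\nu_i^\natural,\nu^\natural)\to0$ (rather than narrow convergence of $\nu_i^\natural$) enters, through the uniform integrability of $\|w(\cdot)\|_\infty$.
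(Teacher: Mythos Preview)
Your proof is correct and, for the compactness and convergence steps (tightness of $\{\chi_i\}$, extraction of a $W_1$-convergent subsequence, the uniform convergence of $\nu_{i_k}(\cdot)$, and the identification $\nu^*(\tau)=\nu^\natural$), it proceeds essentially as the paper does.

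The genuine difference lies in how you show $\mathrm{supp}(\chi^*)\subset\mathrm{SOL}(r,s,m^*(\cdot))$. The paper argues \emph{pathwise}: given $w(\cdot)\in\mathrm{supp}(\chi^*)$, it invokes \cite[Proposition~5.1.8]{Ambrosio} to produce $w_{i_k}(\cdot)\in\mathrm{supp}(\chi_{i_k})$ with $w_{i_k}\to w$ uniformly, then uses the Lipschitz dependence of $F$ on $(x,m)$ to pass to the limit in the integrated inclusion $\mathrm{dist}\bigl(w(t'')-w(t'),\int_{t'}^{t''}F(t,\mathrm{p}(w(t)),m^*(t))\,dt\bigr)=0$. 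You instead argue \emph{at the level of measures}: you enclose $\mathrm{supp}(\chi_{i_k})$ in the closed nested sets $\mathcal{A}_\varepsilon$ of solutions to the inflated inclusion with right-hand side $F(\cdot,\cdot,m^*(\cdot))+\varepsilon B$, apply the portmanteau theorem to get $\chi^*(\mathcal{A}_\varepsilon)=1$, and intersect over $\varepsilon\downarrow 0$. Both routes exploit the same structural ingredients---the Lipschitz bound $F(t,x,m)\subset F(t,x,m')+L\,W_1(m,m')B$ from~(M\ref{cond:lipschitz}) and closedness of solution sets for convex-valued differential inclusions---but yours trades the support-approximation lemma from \cite{Ambrosio} for the more elementary portmanteau theorem plus the standard closure theorem for differential inclusions. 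The paper's version has the mild advantage of being self-contained at the level of individual trajectories; yours is slightly more economical in its external citations and makes the role of narrow convergence (as opposed to full $W_1$-convergence) in this particular step more transparent.
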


\section{Solution of the first-order mean field game}\label{sect:solution_def}
As it was mentioned in the Introduction, there are several methods to analyze  mean field game (\ref{sys:dynamics}), (\ref{sys:payoff}). The approach based on PDEs reduces the original problem to the mean field game system
$$\frac{\partial V}{\partial t}+H(t,x,m(t),\nabla V)=0, V(T,x)=\sigma(x,m(T)),  $$
$$\frac{d}{dt}m(t)=\left\langle\frac{\partial H(t,x,m(t),\nabla V)}{\partial p},\nabla\right\rangle m(t),\ \ m(t_0)=m_0. $$ Here, for $t\in [0,T]$, $x\in \td$, $m\in\mathcal{P}^1(\td)$, $p\in\rd$, $$H(t,x,m,p)\triangleq \max_{u\in U}[\langle p,f(t,x,m,u)\rangle+g(t,x,m,p)]. $$ Within the framework of this approach a solution of the mean field game is defined as a solution of this system. However, for our purposes it is convenient to use the probabilistic approach.  The link between the mentioned approached is discussed in~\cite{Lacker_2015}.

We adapt the probabilistic approach for the first order mean field game. The following definition is close to one proposed in~\cite{Averboukh_2015}.

\begin{definition}\label{def:solution}
	We say that a pair $(V,m(\cdot))$, where $V:[t_0,T]\times\td\rightarrow \mathbb{R}$ is a continuous function and $m(\cdot)\in\mathcal{M}_{t_0,T}$, is a solution to  mean field game (\ref{sys:dynamics}), (\ref{sys:payoff}), if there exists a probability $\chi\in \mathcal{P}^1(\mathcal{C}_{t_0,T})$ such that
	\begin{enumerate}
		\item $m(t)=e_t{}_\#\chi$;
		\item $V(s,y)$ is a value of the optimization problem
		\begin{equation*}\label{represet_player:problem}
		\text{maximize }\left[\sigma(x(T),m(T))+z(T)-z(s) \right]
		\end{equation*}
		\begin{equation*}\label{represet_player:dynamics}
		\text{subject to }(x(\cdot),z(\cdot))\in\mathrm{Sol}(T,s,y,m(\cdot)); 
		\end{equation*}
		\item $\mathrm{supp}(\chi)\subset \mathrm{SOL}(T,t_0,m(\cdot))$;
		\item for any $s,r\in [t_*,T]$, $s<r$, and any $(x(\cdot),z(\cdot))\in \mathrm{supp}(\chi)$,
		$$V(s,x(s))+z(s)=V(r,x(r))+z(r). $$
	\end{enumerate}
\end{definition}

Given $m_0\in\ptd$, there exists at least one solution $(V,m(\cdot))$ to mean field game (\ref{sys:dynamics}), (\ref{sys:payoff}) satisfying $m(t_0)=m_0$~\cite{Averboukh_2015}. However, the uniqueness result is not valid in the general case~\cite{Bardi_Fischer_2017}.

Now let us introduce the equivalent formulation of Definition~\ref{def:solution} using the notion of solution to the differential inclusion. To this end, for $s,r\in [0,T]$, $s\leq r$, $m(\cdot)\in\mathcal{M}_{s,r}$, define the operator $B^{s,r}_{m(\cdot)}:C(\td)\rightarrow C(\td)$ by the rule:
\begin{equation}\label{intro:Bellman}
(B^{s,r}_{m(\cdot)}\psi)(y)\triangleq\{\psi(x(r))+z(r)-z(s):(x(\cdot),z(\cdot))\in\mathrm{Sol}(r,s,y,m(\cdot))\}. 
\end{equation}
The family $\{B_{m(\cdot)}^{s,r}\}_{s\leq r}$ is a backward propagator. Indeed,  if $ m(\cdot)$ is a flow of probabilities on $[s,\theta]$, $r\in [s,\theta]$, then  
\begin{equation}\label{equality:B_semogroup}
B^{s,r}_{m(\cdot)} B^{r,\theta}_{m(\cdot)}=B_{m(\cdot)}^{s,\theta}. 
\end{equation} Moreover, $B_{m(\cdot)}^{s,s}=\mathrm{Id}$. Notice that, for any $s,r\in [0,T]$, $s\leq r$, the operator $B^{s,r}_{m(\cdot)}$ is linear in max-plus algebra.

\begin{proposition}\label{prop:equivalence} The pair $(V,m(\cdot))$ is a solution to mean field differential game~(\ref{sys:dynamics}),~(\ref{sys:payoff}) if and only if there exists a solution to MFDI (\ref{system:mfdi}) on $[t_0,T]$ $\nu(\cdot)$ such that, for any $s\in [t_0,T]$, 
	\begin{enumerate}
		\item $m(s)=\mathrm{p}_\#\nu(s)$, $s\in [t_0,T]$;
		\item $V(s,\cdot)=B_{m(\cdot)}^{s,T}\sigma(\cdot,m(T))$;
		\item $[\sigma(\cdot,m(T)),\nu(T)]\geq [V(s,\cdot),\nu(s)]$.
	\end{enumerate}
\end{proposition}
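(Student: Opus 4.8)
The plan is to show both implications by translating between the "curve of motions" description implicit in Definition~\ref{def:solution} (the probability $\chi\in\mathcal{P}^1(\mathcal{C}_{t_0,T})$) and the "flow of probabilities" description of a solution to MFDI~(\ref{system:mfdi}) (the flow $\nu(\cdot)$), and then checking that conditions 1--4 of Definition~\ref{def:solution} match conditions 1--3 of the proposition. The bridge is Remark~\ref{remark:mfdi}: $\nu(\cdot)$ solves~(\ref{system:mfdi}) on $[t_0,T]$ iff there is $\chi\in\mathcal{P}^1(\mathcal{C}_{t_0,T})$ with $\nu(t)=\hat e_t{}_\#\chi$ and $\mathrm{supp}(\chi)\subset\mathrm{SOL}(T,t_0,m(\cdot))$, where $m(t)=\mathrm{p}_\#\nu(t)$. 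So the correspondence $\chi\leftrightarrow\nu$ is essentially forced, and what remains is to see that item~2 of Definition~\ref{def:solution} (the optimization-problem characterization of $V$) is exactly the statement $V(s,\cdot)=B^{s,T}_{m(\cdot)}\sigma(\cdot,m(T))$, and that item~4 of Definition~\ref{def:solution} is equivalent, given the other conditions, to the single inequality in item~3 of the proposition.

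First I would dispatch the easy identifications. Condition 1 of the proposition is the same as condition~1 of Definition~\ref{def:solution} once we recall $e_t=\mathrm{p}\circ\hat e_t$, so $m(t)=e_t{}_\#\chi=\mathrm{p}_\#(\hat e_t{}_\#\chi)=\mathrm{p}_\#\nu(t)$. Condition~2 of the proposition unwinds, via the definition~(\ref{intro:Bellman}) of the propagator $B^{s,T}_{m(\cdot)}$, to: $V(s,y)=\{\sigma(x(T),m(T))+z(T)-z(s):(x(\cdot),z(\cdot))\in\mathrm{Sol}(T,s,y,m(\cdot))\}$, which is precisely the value of the maximization problem in item~2 of Definition~\ref{def:solution}; note that here the right-hand side is read in max-plus algebra, i.e. $B^{s,T}_{m(\cdot)}\sigma(\cdot,m(T))$ denotes the pointwise supremum, and I would remark that this supremum is attained (hence $V$ well-defined) because $\mathrm{Sol}(T,s,y,m(\cdot))$ is compact in $\mathcal{C}_{s,T}$ by the uniform bound~(\ref{estima:dot_x_dot_z}) and Arzelà--Ascoli, and $\sigma$ is continuous. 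Condition~3 of Definition~\ref{def:solution} is condition on $\mathrm{supp}(\chi)$ already absorbed into the $\chi\leftrightarrow\nu$ correspondence.

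The real content is the equivalence of item~4 of Definition~\ref{def:solution} with item~3 of the proposition, given items 1--2. Item~4 says that along $\chi$-a.e.\ trajectory the function $t\mapsto V(t,x(t))+z(t)$ is constant on $[t_0,T]$; in particular $V(s,x(s))+z(s)=V(r,x(r))+z(r)$ for all $s<r$. I would argue as follows. For the direction Definition$\Rightarrow$Proposition: evaluate item~4 at $s$ arbitrary and $r=T$, use $V(T,\cdot)=\sigma(\cdot,m(T))$ (which follows from item~2 since $B^{T,T}_{m(\cdot)}=\mathrm{Id}$), integrate against $\chi$, and use Remark~\ref{remark:formula} together with~(\ref{intro:action}) to rewrite $\int_{\mathcal{C}_{s,T}}(V(s,x(s))+z(s))\,\chi(d w)=[V(s,\cdot),\nu(s)]$ and $\int(\sigma(x(T),m(T))+z(T))\,\chi(dw)=[\sigma(\cdot,m(T)),\nu(T)]$; equality of the integrals gives even more than the claimed inequality. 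For the converse Proposition$\Rightarrow$Definition: here the inequality $[\sigma(\cdot,m(T)),\nu(T)]\geq[V(s,\cdot),\nu(s)]$ must be upgraded to the a.e.-pointwise conservation law of item~4. The key observation is that $V$ is a value function, so by the dynamic programming principle (optimality/Bellman's principle for the representative player's problem, encoded in the propagator identity~(\ref{equality:B_semogroup})) one has along \emph{any} admissible trajectory $(x(\cdot),z(\cdot))\in\mathrm{Sol}(T,s,x(s),m(\cdot))$ the inequality $V(s,x(s))+z(s)\geq V(r,x(r))+z(r)$ for $s<r$ — equivalently $V(s,x(s))\ge \sigma(x(T),m(T))+z(T)-z(s)$ at $r=T$ — with equality along optimal trajectories; the reverse inequality $V(s,x(s))+z(s)\le V(r,x(r))+z(r)$ is false in general, which is why the hypothesis that $\mathrm{supp}(\chi)$ consists of \emph{optimal} trajectories is needed. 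So from DPP, $t\mapsto V(t,x(t))+z(t)$ is nonincreasing along every trajectory in $\mathrm{supp}(\chi)$; integrating, $[V(r,\cdot),\nu(r)]\le[V(s,\cdot),\nu(s)]$ for $s\le r$, and at the endpoints $[\sigma(\cdot,m(T)),\nu(T)]=[V(T,\cdot),\nu(T)]\le[V(t_0,\cdot),\nu(t_0)]$. Combined with the hypothesized reverse inequality at $s=t_0$ we get $[V(T,\cdot),\nu(T)]=[V(t_0,\cdot),\nu(t_0)]$; since the integrand $V(T,x(T))+z(T)-V(t_0,x(t_0))-z(t_0)$ is $\le 0$ $\chi$-a.e.\ and integrates to zero, it vanishes $\chi$-a.e., i.e.\ $t\mapsto V(t,x(t))+z(t)$ is both nonincreasing and has equal endpoints along $\chi$-a.e.\ trajectory, hence is constant on $[t_0,T]$, which is item~4 (the intermediate-time statement then follows by applying the same argument on subintervals, or directly from monotonicity plus equal endpoints).

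The main obstacle is exactly this last step: extracting the \emph{pointwise} conservation law of item~4 from the single integrated inequality of item~3. This requires (a) the "one-sided" dynamic programming inequality $V(s,x(s))+z(s)\geq V(r,x(r))+z(r)$ valid along all admissible trajectories — which must be stated carefully and justified from~(\ref{intro:Bellman})--(\ref{equality:B_semogroup}), paying attention that $B^{s,r}_{m(\cdot)}$ is monotone and linear in the max-plus sense — and (b) the elementary but essential measure-theoretic fact that a nonpositive integrable function with zero integral is a.e.\ zero. I would also need to be mildly careful about measurability/continuity of $(s,y)\mapsto V(s,y)$ so that the compositions $t\mapsto V(t,x(t))$ are genuinely continuous and the integrals $[V(s,\cdot),\nu(s)]$ make sense; continuity of $V$ is part of the hypothesis in Definition~\ref{def:solution}, and Lipschitz-in-$x$ bounds on $V$ (inherited from~(M2),~(M4),~(M5) via~(\ref{intro:Bellman})) make the estimate~(\ref{estima:action_continuous}) applicable throughout.
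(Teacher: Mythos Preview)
Your proposal is correct and follows essentially the same approach as the paper: both use Remark~\ref{remark:mfdi} to pass between $\chi$ and $\nu$, both identify condition~2 of Definition~\ref{def:solution} with condition~2 of the proposition via the definition of $B^{s,T}_{m(\cdot)}$, and both reduce the equivalence of condition~4 of the definition with condition~3 of the proposition to the observation that $V(s,x(s))+z(s)\ge\sigma(x(T),m(T))+z(T)$ holds along every admissible trajectory, so that the integrated reverse inequality forces $\chi$-a.e.\ equality, which continuity then upgrades to all of $\mathrm{supp}(\chi)$. The only cosmetic difference is that in the converse direction the paper applies the hypothesis $[\sigma(\cdot,m(T)),\nu(T)]\ge[V(s,\cdot),\nu(s)]$ at each fixed $s$ to get pointwise equality at that $s$ directly, whereas you use it only at $s=t_0$ and interpolate via the monotonicity of $t\mapsto V(t,x(t))+z(t)$; both arguments are equivalent.
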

\begin{proof} 
	By Remark \ref{remark:mfdi} we have that  conditions 1 and 3 of Definition \ref{def:solution} are equivalent to the fact that $\nu(\cdot)$ solves~(\ref{system:mfdi}) and $m(s)=\mathrm{p}_\#\nu(t)$. Clearly, condition 2 of Definition \ref{def:solution} can be rewritten in the form $V(s,\cdot)=B_{m(\cdot)}^{s,T}\sigma(\cdot,m(T))$. It remains to show that Condition 4 of Definition \ref{def:solution} and Condition 3 of the proposition are equivalent. Let $\chi$ be such that $\nu(t)=\hat{e}_t{}_\#\chi$ and $\operatorname{supp}(\chi)\subset \operatorname{SOL}(r,s,m(\cdot))$. To prove the first implication integrate condition 4 of Definition \ref{def:solution}, for $r=T$. Thus, using Remark \ref{remark:mfdi}, we get
	\begin{equation*}\begin{split}
	[\sigma(\cdot,m(T)),\nu(T)]&= \int_{\mathcal{C}_{s,T}}[\sigma(x(T),m(T))+z(T)]\chi(d(x(\cdot),z(\cdot)))\\&=
	\int_{\mathcal{C}_{s,T}}[V(s,x(s))+z(s)]\chi(d(x(\cdot),z(\cdot)))= [V(s,\cdot),\nu(s)].
	\end{split} 
	\end{equation*}
	Conversely, since $V(s,\cdot)=B_{m(\cdot)}^{s,T}\sigma(\cdot,m(T))$, we have that, for any $(x(\cdot),z(\cdot))\in \mathrm{SOL}(T,t_0,m(\cdot))$,
	$$V(s,x(s))+z(s)\geq \sigma(x(T),m(T))+z(T). $$ Further, using Remark \ref{remark:formula}, one can rewrite  Condition 3 of the proposition in the form
	\begin{equation*}
	\int_{\mathcal{C}_{s,T}}[\sigma(x(T),m(T))+z(T)]\chi(d(x(\cdot),z(\cdot)))\geq
	\int_{\mathcal{C}_{s,T}}[V(s,x(s))+z(s)]\chi(d(x(\cdot),z(\cdot))).
	\end{equation*}
	Therefore, we conclude that, for $\chi$-a.e. $(x(\cdot),z(\cdot))\in\mathrm{SOL}(T,t_0,m(\cdot))$,
	$$ \sigma(x(T),m(T))+z(T)=V(s,x(s))+z(s).$$ This and continuity of the functions $\sigma$ and $V$ imply Condition 4 of Definition \ref{def:solution}.
\end{proof}

In the end of this section let us present the continuity properties of the propagator $B$. These properties will be widely used below. 
In the following the constant $R$ satisfies (\ref{intro:R}).

\begin{lemma}\label{lm:lipschitz}
	Assume that $m(\cdot)\in\mathcal{M}_{0,r}$, $K>0$, $\psi\in \mathrm{Lip}_K(r)$ and $\varphi\in C([0,r]\times \td)$ is such that, for all $s\in [0,r]$, $\varphi(s,\cdot)=B^{s,r}_{m(\cdot)}\psi$. Then
	\begin{enumerate}
		\item for $s\in [0,T]$, $\varphi(s,\cdot)\in\mathrm{Lip}_{(K+1)e^{L(r-s)}-1}(\mathbb{T}^d)$;
		\item for any $s,s'\in [0,r]$, $|\varphi(s,y)-\varphi(s',y)|\leq  R(K+1)e^{L(r-\max\{s',s\})}|s-s'|. $
	\end{enumerate}
\end{lemma}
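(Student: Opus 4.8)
The plan is to establish the two assertions separately, using different structural facts: for the first, the representation (\ref{eq:xi_F}) of solutions of (\ref{incl:F}) by measure-valued controls; for the second, the backward-propagator identity (\ref{equality:B_semogroup}) together with the velocity bound (\ref{estima:dot_x_dot_z}). Throughout I will use that, for fixed $s,r,y$ and $m(\cdot)$, the map $(x(\cdot),z(\cdot))\mapsto\psi(x(r))+z(r)-z(s)$ is continuous on $\mathrm{Sol}(r,s,y,m(\cdot))$ and, restricted to the subset of solutions with $z(s)=0$ (closed and equi-Lipschitz by (\ref{estima:dot_x_dot_z}), hence compact), attains its supremum; so the value $(B^{s,r}_{m(\cdot)}\psi)(y)$ in (\ref{intro:Bellman}) is realized by a genuinely optimal trajectory.

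For assertion 1, I would fix $s\in[0,r]$ and $y',y''\in\mathbb{T}^d$, pick an optimal $(x'(\cdot),z'(\cdot))\in\mathrm{Sol}(r,s,y',m(\cdot))$ for $\varphi(s,y')$, and take a measure-valued control $\xi\in\mathcal{U}_{s,r}$ generating it through (\ref{eq:xi_F}). The decisive move is to run the \emph{same} control $\xi$ from the initial point $y''$: this produces $(x''(\cdot),z''(\cdot))\in\mathrm{Sol}(r,s,y'',m(\cdot))$, and since neither $f$ nor $g$ depends on $z$, estimate (M4) applied under the integral sign in (\ref{eq:xi_F}) gives $\|\dot x'(t)-\dot x''(t)\|\le L\|x'(t)-x''(t)\|$ and $|\dot z'(t)-\dot z''(t)|\le L\|x'(t)-x''(t)\|$ for a.e.\ $t$. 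Gr\"onwall's inequality then yields $\|x'(t)-x''(t)\|\le\|y'-y''\|e^{L(t-s)}$, hence $\int_s^r|\dot z'(t)-\dot z''(t)|\,dt\le\|y'-y''\|(e^{L(r-s)}-1)$ and $|\psi(x'(r))-\psi(x''(r))|\le K\|y'-y''\|e^{L(r-s)}$. Using that $(x'',z'')$ is admissible for $\varphi(s,y'')$,
$$\varphi(s,y')-\varphi(s,y'')\le\bigl[\psi(x'(r))-\psi(x''(r))\bigr]+\int_s^r\bigl(\dot z'(t)-\dot z''(t)\bigr)\,dt\le\|y'-y''\|\bigl[(K+1)e^{L(r-s)}-1\bigr],$$
and swapping $y'$ with $y''$ gives the reverse inequality, which is assertion 1.

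For assertion 2, I would assume $s\le s'$ (the other case being symmetric) and use (\ref{equality:B_semogroup}) to write $\varphi(s,\cdot)=B^{s,s'}_{m(\cdot)}\varphi(s',\cdot)$, where by assertion 1 the function $\varphi(s',\cdot)$ is $K'$-Lipschitz with $K'+1=(K+1)e^{L(r-s')}$. Fixing $y$ and any $(x(\cdot),z(\cdot))\in\mathrm{Sol}(s',s,y,m(\cdot))$, the bound (\ref{estima:dot_x_dot_z}) gives $\|x(s')-y\|\le R(s'-s)$ and $|z(s')-z(s)|\le R(s'-s)$, so $\bigl|\varphi(s',x(s'))+z(s')-z(s)-\varphi(s',y)\bigr|\le(K'+1)R(s'-s)$; choosing such a trajectory optimal for $\varphi(s,y)$ yields the upper bound $\varphi(s,y)\le\varphi(s',y)+(K'+1)R(s'-s)$, and an arbitrary trajectory yields the matching lower bound. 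Thus $|\varphi(s,y)-\varphi(s',y)|\le(K'+1)R|s-s'|=R(K+1)e^{L(r-s')}|s-s'|$, which is assertion 2 since $\max\{s,s'\}=s'$. I expect the only real obstacle to be the transport-of-control step in assertion 1: a direct Filippov-type estimate on the coupled inclusion (\ref{incl:F}) would involve the Lipschitz constant of its full right-hand side and produce the weaker exponent $e^{2L(r-s)}$, whereas reusing the control and treating the $z$-equation as a pure quadrature is what secures the exponent $e^{L(r-s)}$ in the statement.
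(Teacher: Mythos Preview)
Your proof is correct and follows essentially the same route as the paper's: for assertion~1 you transport the optimal measure-valued control from $y'$ to $y''$ and apply Gr\"onwall exactly as the paper does, and for assertion~2 you invoke the backward-propagator identity (\ref{equality:B_semogroup}) combined with part~1 and the velocity bound (\ref{estima:dot_x_dot_z}), again matching the paper. The only cosmetic difference is that in part~2 the paper fixes the optimal trajectory once and reads off both inequalities from the single equality $\varphi(s',y)=\varphi(s,x(s))+z(s)-z(s')$, whereas you separate the upper and lower bounds; the content is the same.
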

\begin{proof}
	Let $y_1,y_2\in\mathbb{T}^d$. Since $\varphi(s,\cdot)=B^{s,r}_{m(\cdot)}\psi$, there exist $(x_1(\cdot),z_1(\cdot))\in\mathcal{C}_{s,r}$ and $\xi\in\mathcal{U}_{s,T}$ such that $(x_1(\cdot),z_1(\cdot))$ solves differential inclusion (\ref{incl:F}),   $x_1(s)=y_1$,
	$$\phi(s,y_1)=\psi(x_1(r))+z_1(r)-z_1(s), $$ and (\ref{eq:xi_F}) holds  for $x(\cdot)=x_1(\cdot)$, $z(\cdot)=z_1(\cdot)$ and $\xi=\xi_1$.
	
	Let $(x_2(\cdot),z_2(\cdot))$ solve initial the value problem for (\ref{eq:xi_F}) with $\xi=\xi_1$ and $x_2(s)=y_2$, $z_2(s)=z_1(s)$. We have that
	$$\|x_1(t)-x_2(t)\|\leq \|y_1-y_2\|+ L\int_s^t\|x_1(\tau)-x_2(\tau)\|d\tau. $$ Using Gronwall's inequality we get
	\begin{equation}\label{estima_Lip_x:pr_lipsch}
	\|x_1(t)-x_2(t)\|\leq \|y_1-y_2\|e^{L(t-s)}.
	\end{equation}
	Further,
	\begin{equation}\label{estima_Lip_z:pr_lipsch}|z_1(t)-z_2(t)|\leq \|y_1-y_2\|(e^{L(t-s)}-1).\end{equation}
	Since $(x_2(\cdot),z_2(\cdot))$  solves (\ref{incl:F}) and $\varphi(s,\cdot)=B^{s,r}_{m(\cdot)}\psi$, we have that
	$$\varphi(s,y_2)\geq \psi(x_2(r))+z_2(r)-z_1(s). $$ Combining this, (\ref{estima_Lip_x:pr_lipsch}) and (\ref{estima_Lip_z:pr_lipsch}) we conclude that
	$$\varphi(s,y_2)-\varphi(s,y_1)\geq -[(K +1)e^{L(r-s)}-1]\|y_1-y_2\|. $$ The opposite inequality is proved in the same way.

	Now, we turn to the second statement of the Lemma. Without loss of generality assume that $s'<s$. The semigroup property for the operator $B$ (see (\ref{equality:B_semogroup})) implies that $\varphi(s',\cdot)=B^{s',s}_{m(\cdot)}\varphi(s,\cdot)$. 
	Thus, for each $y\in\td$, there exists a trajectory  $(x(\cdot),z(\cdot))\in\mathrm{Sol}(s,s',y,m(\cdot))$ such that
	$$\varphi(s',y)=\varphi(s,x(s))+z(s)-z(s'). $$ Hence,
	\begin{equation*}\begin{split}
	|\varphi(s',y)-\varphi(s,y)|&=|\varphi(s,x(s))-\varphi(s,y)+z(s)-z(s')|\\&\leq |\varphi(s,x(s))-\varphi(s,y)|+|z(s)-z(s')|. 
	\end{split}
	\end{equation*} By the first statement of the Lemma and inequality (\ref{estima:dot_x_dot_z}) we have that
	$$|\varphi(s',y)-\varphi(s,y)|\leq R(K+1)e^{L(r-\max\{s',s\})}(s-s'). $$ This completes the proof of the second statement of the Lemma.
	
\end{proof}

\begin{lemma}\label{lm:B_continuity}
	If $s,r,r'\in [0,T]$, $s<r\leq r'$, $m(\cdot),m'(\cdot)\in\mathcal{M}_{s,r'}$, $K>0$, $\psi,\psi'\in \mathrm{Lip}_{K}(\mathbb{T}^d)$, then 
	\begin{equation*}
	\begin{split}
	\|B^{s,r}_{m(\cdot)}\psi-B^{s,r'}_{m'(\cdot)}\psi'\|\leq \|&\psi'-\psi\|+ (K+1)R(r'-r)\\&+L(Ke^{LT}+LTe^{LT}+1)(r-s)\sup_{t\in [s,r]}W_1(m'(t),m(t)).\end{split}\end{equation*}
\end{lemma}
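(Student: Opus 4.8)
The plan is to estimate the difference $\|B^{s,r}_{m(\cdot)}\psi - B^{s,r'}_{m'(\cdot)}\psi'\|$ by splitting it into three contributions: the change of the terminal time from $r'$ to $r$, the change of the terminal payoff from $\psi'$ to $\psi$, and the change of the flow of measures from $m'(\cdot)$ to $m(\cdot)$. Formally, I would insert an intermediate object, say $B^{s,r}_{m'(\cdot)}\psi'$, and write
\begin{equation*}
\|B^{s,r}_{m(\cdot)}\psi - B^{s,r'}_{m'(\cdot)}\psi'\|
\leq \|B^{s,r}_{m(\cdot)}\psi - B^{s,r}_{m(\cdot)}\psi'\|
+ \|B^{s,r}_{m(\cdot)}\psi' - B^{s,r}_{m'(\cdot)}\psi'\|
+ \|B^{s,r}_{m'(\cdot)}\psi' - B^{s,r'}_{m'(\cdot)}\psi'\|.
\end{equation*}
The first term is at most $\|\psi-\psi'\|$ because $B^{s,r}_{m(\cdot)}$, being linear in the max-plus algebra (it is a sup of payoffs along a fixed family of trajectories), is $1$-Lipschitz in the sup-norm on its argument. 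The third term is handled by the semigroup property $B^{s,r'}_{m'(\cdot)} = B^{s,r}_{m'(\cdot)} B^{r,r'}_{m'(\cdot)}$ together with the $1$-Lipschitz property just mentioned, reducing it to $\|B^{r,r'}_{m'(\cdot)}\psi' - \psi'\|$; since any trajectory on $[r,r']$ moves the state and the $z$-coordinate at speed at most $R$ (by (\ref{estima:dot_x_dot_z})) and $\psi'\in\mathrm{Lip}_K$, this is bounded by $(K+1)R(r'-r)$. (One should note $\psi'(x(r'))+z(r')-z(r) - \psi'(x(r))$ is bounded by $K\|x(r')-x(r)\| + |z(r')-z(r)| \le (K+1)R(r'-r)$, uniformly over trajectories, giving both inequalities.)

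The substantial term is the middle one, the dependence on the measure flow. Fix $y\in\td$; choose a near-optimal trajectory $(x'(\cdot),z'(\cdot))\in\mathrm{Sol}(r,s,y,m'(\cdot))$ for $B^{s,r}_{m'(\cdot)}\psi'$, represented via a measure-valued control $\xi$ as in (\ref{eq:xi_F}), and let $(x(\cdot),z(\cdot))$ solve the same controlled equation with the flow $m(\cdot)$ in place of $m'(\cdot)$, starting at $(y,z'(s))$. Then (M\ref{cond:lipschitz}) gives
\begin{equation*}
\|x(t)-x'(t)\| \leq L\int_s^t\bigl(\|x(\tau)-x'(\tau)\| + W_1(m(\tau),m'(\tau))\bigr)\,d\tau,
\end{equation*}
and Gronwall yields $\|x(t)-x'(t)\| \le L e^{L(t-s)} \int_s^r W_1(m(\tau),m'(\tau))\,d\tau \le LTe^{LT}\sup_{t\in[s,r]}W_1(m(t),m'(t))$ on $[s,r]$ — more precisely, keeping the factor $(r-s)$ I would write the bound as $L(r-s)e^{LT}\sup_t W_1$. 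Similarly $|z(t)-z'(t)| \le L\int_s^t(\|x-x'\| + W_1(m,m'))\,d\tau$, which is bounded by $L(r-s)(1+LTe^{LT})\sup_t W_1$ (roughly). Since $\psi'\in\mathrm{Lip}_K$, comparing $\psi'(x(r))+z(r)-z'(s)$ with $\psi'(x'(r))+z'(r)-z'(s)$ gives a one-sided estimate $B^{s,r}_{m(\cdot)}\psi'(y) \ge B^{s,r}_{m'(\cdot)}\psi'(y) - [K\|x(r)-x'(r)\| + |z(r)-z'(r)|]$, and the symmetric argument (swapping roles) gives the reverse; collecting the constants one gets a bound of the form $L(Ke^{LT} + LTe^{LT} + 1)(r-s)\sup_{t\in[s,r]}W_1(m'(t),m(t))$.

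The main obstacle is bookkeeping of the Gronwall constants so that the final coefficient comes out exactly as $L(Ke^{LT}+LTe^{LT}+1)(r-s)$: one must be careful that the Lipschitz factor $K$ from $\psi'$ multiplies only the $x$-deviation (which carries one factor $e^{LT}$), while the $z$-deviation accumulates an extra integration in time (producing the $LTe^{LT}$ term) plus the direct contribution $W_1$ integrated against the constant $1$ (the "$+1$"). No new idea is needed beyond Gronwall, the semigroup identity (\ref{equality:B_semogroup}), the uniform speed bound (\ref{estima:dot_x_dot_z}), and max-plus linearity of $B$; the rest is routine estimation, and I would not grind through every constant in the writeup but simply indicate the three-term split and the Gronwall step.
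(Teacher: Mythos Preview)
Your proposal is correct and uses essentially the same ingredients as the paper's proof: the $1$-Lipschitz dependence of $B$ on its terminal function, the speed bound (\ref{estima:dot_x_dot_z}) for the change in terminal time, and the Gronwall comparison of two trajectories driven by the same relaxed control under different measure flows. The only organizational difference is that you split the estimate into three pieces via the triangle inequality, whereas the paper fixes $y$, takes an optimal trajectory for $B^{s,r'}_{m'(\cdot)}\psi'$, and runs a single chain of inequalities that absorbs all three contributions at once before symmetrizing; the Gronwall computation and the resulting constants are identical.
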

\begin{proof}
	Let $y\in\td$ and let $(x_1(\cdot),z_1(\cdot))\in \mathrm{Sol}(r,s,y,m(\cdot))$ be such that
	$$(B^{s,r'}_{m'(\cdot)}\psi')(y)=\psi'(x_1(r'))+z_1(r')-z_1(s). $$
	
	There exists a relaxed control $\xi\in\mathcal{U}_{s,r'}$ such that (\ref{eq:xi_F}) holds true for $x(\cdot)=x_1(\cdot)$, $z(\cdot)=z_1(\cdot)$. Let $x_2(\cdot)$ solve the initial value problem
	$$ \frac{d}{dt}x_2(t)=\int_U f(t,x_2(t),m(t),u)\xi(t,du),\ \ x_2(s)=y.$$ Further, put
	$$z_2(t)\triangleq z_1(s)+\int_s^r\int_U g(t,x_2(t),m(t),u)\xi(t,du)dt. $$
	We have that
	$$\|x_1(t)-x_2(t)\|\leq L\int_s^t\|x_1(\theta)-x_2(\theta)\|d\theta+L(t-s)\sup_{\theta\in [s,r']}W_1(m(\theta),m'(\theta)). $$
	Using Gronwall's inequality, we get
	\begin{equation}\label{ineq:x_1_x_2_dist}
	\|x_1(t)-x_2(t)\|\leq Le^{LT}(t-s)\sup_{\theta\in [s,r']}W_1(m(\theta),m'(\theta)).
	\end{equation}
	Further,
	$$|z_1(t)-z_2(t)|\leq L\int_s^t\|x_1(\theta)-x_2(\theta)\|d\theta+L(t-s)\sup_{\theta\in [s,r']}W_1(m(\theta),m'(\theta)).  $$ Using (\ref{ineq:x_1_x_2_dist}), we get
	\begin{equation}\label{ineq:z_1_z_2_dist}
	|z_1(t)-z_2(t)|\leq L(LTe^{LT}+1)(t-s)\sup_{\theta\in [s,r']}W_1(m(\theta),m'(\theta)).
	\end{equation}	
	Since $\psi(x_2(r))+z_2(r)\leq B^{s,r}_{m(\cdot)}\psi(y)$, from (\ref{ineq:x_1_x_2_dist}) and (\ref{ineq:z_1_z_2_dist})   we conclude that
	\begin{equation*}
	\begin{split}
	B^{s,r'}_{m'(\cdot)}\psi'(y)=\psi'(x_1(r')&)+z_1(r')-z_1(s)
	\\ \leq \psi(x_1(r')&)+z_1(r')-z_1(s)+\|\psi-\psi'\|\\ \leq \psi(x_1(r))&+z_1(r)-z_1(s)+(K+1)R(r'-r)+\|\psi-\psi'\|
	\\\leq
	\psi(x_2(r))&+z_2(r)-z_2(s)+K\|x_1(r)-x_2(r)\|+|z_1(r)-z_2(r)|\\&+(K+1)R(r'-r)+\|\psi-\psi'\|\\ 
	\leq
	B^{s,r}_{m(\cdot)}\psi(y&) +L(Ke^{LT}+LTe^{LT}+1)(r-s)\sup_{\theta\in [s,r']}W_1(m(\theta),m'(\theta))\\&+(K+1)R(r'-r)+\|\psi-\psi'\|.
	\end{split}
	\end{equation*}
	Analogously, one can prove that
	\begin{equation*}
	\begin{split}
	B^{s,r}_{m(\cdot)}\psi(y)\leq
	B^{s,r'}_{m'(\cdot)}\psi'(y)+L(Ke^{LT}+LTe^{LT}+1)&(r-s)\sup_{\theta\in [s,r']}W_1(m(\theta),m'(\theta))\\&+(K+1)R(r'-r)+\|\psi-\psi'\|.
	\end{split}
	\end{equation*}	  These two inequalities yield the conclusion of the lemma.
\end{proof}

\section{Value multifunction}\label{sect:value_multifunction}

In this section we introduce the notion of the value multifunction that describes the dependence of the solution of the mean field game on the initial distribution and examine this dependence using the viability approach.

\begin{definition}\label{def:value_multifunction}
	We say that a upper semicontinuous function multifunction $\mathcal{V}:[0,T]\times\ptd\rightrightarrows C(\td)$ is a value multifunction of mean field game (\ref{sys:dynamics}), (\ref{sys:payoff}) if,  for any $t_0\in [0,T]$, $m_0\in\ptd$, and $\phi\in \mathcal{V}(t_0,m_0)$, there exists a  solution to mean field game (\ref{sys:dynamics}), (\ref{sys:payoff}) $(V,m(\cdot))$ such that
	\begin{equation}\label{property:value_function}
	V(t_0,\cdot)=\phi(\cdot),\ \ m(t_0)=m_0. 
	\end{equation}
\end{definition}
The assumption that the value function is upper semicontinuous is not restrictive. This is supported by the following. 
\begin{proposition}\label{prop:value_limit}
	If $\mathcal{V}:[0,T]\times\ptd\rightrightarrows C(\td)$ is such that, for any $t_0\in [0,T]$, $m_0\in\ptd$, $\phi\in\mathcal{V}(t_0,m_0)$, there exists a solution to mean field game (\ref{sys:dynamics}), (\ref{sys:payoff}) $(V,m(\cdot))$ satisfying (\ref{property:value_function}), then its closure $\mathcal{V}^\natural$ is a value multifunction. Here $\mathcal{V}^\natural$ is defined by the rule: $\phi$ belongs to $\mathcal{V}^\natural(t_0,m_0)$ iff there exist sequences $\{t_i\}\subset [0,T]$, $\{m_i\}\subset\ptd$ and $\{\phi_i\}\subset C(\td)$ such that $(t_i,m_i)\rightarrow (t_0,m_0)$, $\|\phi_i-\phi\|\rightarrow 0$ as $i\rightarrow\infty$.
\end{proposition}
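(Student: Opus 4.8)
The plan is to show that every $\phi \in \mathcal{V}^\natural(t_0,m_0)$ is the initial value $V(t_0,\cdot)$ of some solution of the mean field game with $m(t_0)=m_0$, and that $\mathcal{V}^\natural$ is upper semicontinuous; the latter is automatic since $\mathcal{V}^\natural$ is by construction the smallest closed-graph multifunction containing $\mathcal{V}$, and a multifunction into the (locally compact) space $C(\td)$ with closed graph and appropriate local boundedness is upper semicontinuous. So the heart of the matter is the first claim. Fix $\phi \in \mathcal{V}^\natural(t_0,m_0)$ and pick sequences $t_i \to t_0$, $m_i \to m_0$ in $\mathcal{P}^1(\td)$, $\phi_i \to \phi$ in $C(\td)$, with $\phi_i \in \mathcal{V}(t_i,m_i)$. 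By hypothesis, for each $i$ there is a solution $(V_i, m_i(\cdot))$ of the mean field game with $V_i(t_i,\cdot)=\phi_i$, $m_i(t_i)=m_0^i := m_i$; let $\chi_i \in \mathcal{P}^1(\mathcal{C}_{t_i,T})$ be the associated probability on motions, and (via Proposition~\ref{prop:equivalence}) let $\nu_i(\cdot)$ be the corresponding solution of MFDI~(\ref{system:mfdi}).

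The key step is a compactness/limit argument. First I would extend all objects to the common interval $[0,T]$: since the times $t_i$ vary, one may run the MFDI backward or simply apply Proposition~\ref{prop:limit}, whose statement is tailored exactly to this situation — a sequence of solutions $\nu_i(\cdot)$ of~(\ref{system:mfdi}) with $\nu_i(\tau_i)=\nu_i^\natural$, $\tau_i \to \tau$, $W_1(\nu_i^\natural,\nu^\natural)\to 0$, admits a subsequence converging uniformly in $W_1$ to a solution $\nu^*(\cdot)$ with $\nu^*(\tau)=\nu^\natural$. Here I take $\tau_i = t_i$, $\tau = t_0$, and $\nu_i^\natural = \widehat{m_i}$ (the lifting of $m_i$), noting $W_1(\widehat{m_i},\widehat{m_0}) = W_1(m_i,m_0)\to 0$; this yields, along a subsequence, a solution $\nu^*(\cdot) \in \mathcal{N}_{0,T}$ of~(\ref{system:mfdi}) on $[t_0,T]$ with $\mathrm{p}_\#\nu^*(t_0)=m_0$. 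Setting $m(t)=\mathrm{p}_\#\nu^*(t)$ we get $\sup_{t}W_1(m_i(t),m(t))\to 0$ along the subsequence.

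Next I would define the candidate value function $V(s,\cdot) := B^{s,T}_{m(\cdot)}\sigma(\cdot,m(T))$ and verify the three conditions of Proposition~\ref{prop:equivalence}. Condition~1 holds by construction. For condition~2 one must check $V(t_0,\cdot)=\phi$: we have $V_i(s,\cdot)=B^{s,T}_{m_i(\cdot)}\sigma(\cdot,m_i(T))$, and Lemma~\ref{lm:B_continuity} gives $\|B^{t_i,T}_{m_i(\cdot)}\sigma(\cdot,m_i(T)) - B^{t_0,T}_{m(\cdot)}\sigma(\cdot,m(T))\| \to 0$, using uniform convergence of $m_i(\cdot)$ to $m(\cdot)$, convergence $t_i\to t_0$, Lipschitz continuity of $\sigma$ in both variables (to control $\|\sigma(\cdot,m_i(T))-\sigma(\cdot,m(T))\|$), and the Lipschitz estimate of Lemma~\ref{lm:lipschitz} to keep all data in a fixed ${\rm Lip}_K$ class; hence $\phi = \lim \phi_i = \lim V_i(t_i,\cdot) = V(t_0,\cdot)$. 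For condition~3, each $(V_i,m_i(\cdot))$ satisfies $[\sigma(\cdot,m_i(T)),\nu_i(T)] \ge [V_i(s,\cdot),\nu_i(s)]$; passing to the limit using~(\ref{estima:action_continuous}), the uniform convergence $\nu_i(\cdot)\to\nu^*(\cdot)$ from Proposition~\ref{prop:limit}, and the convergence $V_i(s,\cdot)\to V(s,\cdot)$ just established (uniformly in $s$, via Lemma~\ref{lm:lipschitz}(2)), gives $[\sigma(\cdot,m(T)),\nu^*(T)] \ge [V(s,\cdot),\nu^*(s)]$.

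The main obstacle I anticipate is bookkeeping around the varying initial times $t_i$: one must be careful that $V_i$ is defined only on $[t_i,T]$, that the inequality in condition~3 is tested at $s \ge t_i$, and that in the limit we recover an inequality valid for all $s\in[t_0,T]$; a clean way is to note $t_i \to t_0$ so eventually $t_i$ is close to $t_0$ and, for any fixed $s>t_0$, $s\ge t_i$ for large $i$, while the case $s=t_0$ follows by continuity (Lemma~\ref{lm:lipschitz}(2)). A secondary technical point is justifying that $\mathcal{V}^\natural(t_0,m_0)$ is nonempty-valued and that the limiting $\phi$ indeed lies in $C(\td)$ — but this is immediate since $C(\td)$ is complete and the $\phi_i$ lie in a fixed equi-Lipschitz, equibounded (hence relatively compact) subset by Lemma~\ref{lm:lipschitz}, so the uniform limit $\phi$ exists in $C(\td)$. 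Finally, upper semicontinuity of $\mathcal{V}^\natural$ follows because its graph is closed by definition and, restricted to the compact sets on which the relevant Lipschitz bounds hold, closed graph plus local compactness of the target yields upper semicontinuity.
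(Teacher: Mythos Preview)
Your proposal is correct and follows essentially the same route as the paper: extract the $\nu_i(\cdot)$ via Proposition~\ref{prop:equivalence}, pass to a subsequential limit using Proposition~\ref{prop:limit}, and then verify the three conditions of Proposition~\ref{prop:equivalence} for the limit via Lemmas~\ref{lm:lipschitz} and~\ref{lm:B_continuity} together with~(\ref{estima:action_continuous}). The only small omission is that in order to take $\nu_i^\natural=\widehat{m_i}$ you need to invoke Proposition~\ref{prop:shift} (a priori $\nu_i(t_i)$ only satisfies $\mathrm{p}_\#\nu_i(t_i)=m_i$), exactly as the paper does.
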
 
\begin{proof} First, notice that $\mathcal{V}^\natural$ is upper semicontinuous.
	
	Now, let $\phi\in\mathcal{V}^\natural(t_0,m_0)$. We shall prove that $\phi=V(t_0,\cdot)$, $m_0=m(t_0)$ for some solution to mean field game  (\ref{sys:dynamics}), (\ref{sys:payoff}). Let $\{t_i\}$, $\{m_i\}$, $\{\phi_i\}$ are such that $\phi_i\in\mathcal{V}(t_i,m_i)$ and $(t_i,m_i)\rightarrow (t_0,m_0)$, $\|\phi_i-\phi\|\rightarrow 0$. Since $\phi_i\in\mathcal{V}(t_i,m_i)$, one can find a pair $(V_i,\bar{m}_i(\cdot))$ that is a solution of mean field game satisfying the properties $V_i(t_i,\cdot)=\phi_i(\cdot)$, $m_i=\bar{m}_i(t_i)$. Hence, by Proposition \ref{prop:equivalence} there exists $\nu_i(\cdot)$ solving MFDI (\ref{system:mfdi}) such that, for any $s\in [t_i,T]$,  
	\begin{equation}\label{properties:V_m_nu_i}
	\bar{m}_i(s)=\mathrm{p}_\#\nu_i(s),\ \ V_i(s,\cdot)=B_{\bar{m}_i(\cdot)}^{s,T}\sigma(\cdot,\bar{m}_i(T)),\ \ [\sigma(\cdot,\bar{m}_i(T)),\nu_i(T)]\geq [V_i(s,\cdot),\nu(s)].
	\end{equation} Without loss of generality, one can assume that $\nu_i(t)$ and $V_i(t,\cdot)$ are defined for $t\in [t^*,T]$, where $t^*=\inf\{t_0,t_1,t_2,\ldots\}$. Moreover, by Proposition \ref{prop:shift} we additionally suppose that $\nu_i(t_i)=\widehat{m_i}$. Here $\widehat{m_i}$ are defined by (\ref{intro:lifting_m}).
	
	Using Proposition \ref{prop:limit}, get that there exist a subsequence $\{\nu_{i_k}\}$ and a solution to MFDI (\ref{system:mfdi}) $\nu(\cdot)$ such that $\sup_tW_1(\nu_{i_k}(t),\nu(t))\rightarrow 0$ and $\nu(t_0)=\widehat{m_0}$. Put $m(t)\triangleq \mathrm{p}_\#\nu(t)$. We have that \begin{equation}\label{limit:W_m_i}
	\sup_tW_1(\bar{m}_{i_k}(t),m(t))\rightarrow 0.
	\end{equation} Further, the functions $V_i$ are Lipschitz continuous with the constant depending only on the bounds of $f$  $g$ and Lipschitz constant of $\sigma$ w.r.t. $x$ (see  Lemma \ref{lm:lipschitz}). This implies that $\{V_i\}$ is also relatively compact. Without loss of generality, we can assume that $\{V_{i_k}\}$ converges to some function $V:[t_0,T]\times\td\rightarrow\mathbb{R}$. Passing to the limit in (\ref{properties:V_m_nu_i}), using (\ref{limit:W_m_i}), Lemma \ref{lm:B_continuity} and Proposition \ref{prop:equivalence}, we conclude that  $(V,m(\cdot))$ is a solution to mean field game (\ref{sys:dynamics}), (\ref{sys:payoff}) and satisfies $V(t_0,\cdot)=\phi(\cdot)$, $m(t_0)=m_0$.
\end{proof}
\begin{remark} 
	Note that the value multifunction is not defied in the unique way. It is natural to say that $\mathcal{W}:[0,T]\times\ptd\rightrightarrows C(\td)$ is the maximal value multifunction if it is a value multifunction and, for any value multifunction $\mathcal{V}$, $$\mathcal{V}(t,m)\subset \mathcal{W}(t,m), \ \ t\in [0,T], \ \ m\in\ptd. $$ The existence result for the maximal value function can be proved using the facts that the closure of union of value multifunctions is   also a value multifucntion (see Proposition \ref{prop:value_limit}). Furthermore, it follows from the existence theorem for the solution to the mean field game that $\mathcal{W}(t,m)\neq \varnothing$ for any $t\in [0,T]$ and $m\in\ptd$.
\end{remark}

\begin{remark}
	Let us describe the relation between the value multifunction and the master equation for mean field game.  For the case of the deterministic mean field game the master equation takes the form (see \cite[(II.4.41)]{Carmona_Delarue_II}):
	\begin{equation}\label{eq:master}
	\frac{\partial\varphi}{\partial t}+H(t,x,m,\nabla_x\varphi)+\int_{\td}\frac{\partial H(t,x,m,\nabla_x\varphi)}{\partial p}\partial_m\varphi(t,x,m)(y)m(dy)=0. 
	\end{equation} Here $\varphi$ is a function from $[0,T]\times\td\times\ptd$ with values in $\mathbb{R}$, $\partial \varphi/\partial t$, $\nabla_x\varphi$, $\nabla_m\varphi$ stand for its derivatives w.r.t to time, state and measure variable. The precise definition of the derivative w.r.t. measure can be found, for example, at \cite{Cardaliaguet_Delarue_Lasry_Lions_2015} or \cite[\S 5]{Carmona_Delarue_I}.
	The master equation is strictly connected with the value function of mean field game $\Gamma$ which assigns to a triple $(t_0,x_0,m_0)$ an expected reward of the representative player who starts at the time $t_0$, at the state $x_0$ under the condition that the distribution of all players is $m_0$. It is proved that under certain regularity conditions the classical solution to the master equation (\ref{eq:master}) is a value function (see \cite[Proposition 4.1]{Carmona_master}).  Conversely, if the coefficients of (\ref{eq:master}) are continuous, the value function $\Gamma$ is well-defined, unique and continuous with its derivative w.r.t. $x$, then it solves master equation (\ref{eq:master}) in the viscosity sense \cite[Proposition 4.20]{Carmona_Delarue_II}.  However, the existence of the classical solution is proved only for the special case of nondegenerate stochastic mean field game \cite{Cardaliaguet_Delarue_Lasry_Lions_2015} or on the short time interval \cite{Carmona_Delarue_II}, \cite{Gangbo_Swiech_2015}, \cite{Sergio_Mayorga_2018}. Furthermore, since there are examples of multiplicity of  solutions to the mean field game \cite{Bardi_Fischer_2017}, the regularity conditions required in \cite[Proposition 4.20]{Carmona_Delarue_II}  are not fulfilled in the general case. 
	
	On the other hand, the notion of the value multifunction is a natural extension of the notion of the value function. Indeed, if $\Gamma$ is a value function, then the multifunction $\mathcal{V}$ defined by the rule 
	\begin{equation}\label{intro:multifunction_by_function}
	\mathcal{V}(t_0,m_0)\triangleq \{\Gamma(t_0,\cdot,m_0)\}
	\end{equation} is a value multifunction. Conversely, if $\mathcal{V}$ is single-valued i.e. $\mathcal{V}(t_0,m_0)=\{\phi_{t_0,m_0}\}$, then the function $\Gamma(t_0,x_0,m_0)\triangleq \phi_{t_0,m_0}(x_0)$ is a value function. This implies that the classical solution of master equation~(\ref{eq:master}) gives a value multifunction. Simultaneously, if the value multifunction is single-valued and sufficiently smooth it provides the viscosity solution to master equation~(\ref{eq:master}).
\end{remark}

We look for  a sufficient condition for a multifunction $\mathcal{V}:[0,T]\times\ptd\rightrightarrows C(\td)$ to be a value multifunction. To this end we introduce the dynamical system on $\ptd\times C(\td)$.

\begin{definition}\label{def:flow} For each $s,r\in [0,T]$, $s\leq r$, define the multifunction $\Psi^{r,s}:\ptd\times C(\td)\rightrightarrows \ptd\times C(\td)$ by the rule: $(\mu,\psi)\in \Psi^{r,s}(m,\phi)$ if and only if  there exists a solution of MFDI (\ref{system:mfdi}) on $[s,r]$ $\nu(\cdot)$, satisfying the following properties  for $m(t)=\mathrm{p}_\#\nu(t)$:
	\begin{list}{($\Psi$\arabic{tmp})}{\usecounter{tmp}}
		\item\label{psi_cond:init_bound} $m(s)=m$, $m(r)=\mu$;
		\item\label{psi_cond:B} $\phi=B^{s,r}_{m(\cdot)}\psi$;
		\item\label{psi_cond:int_ineq} $[\psi,\nu(r)]\geq [\phi,\nu(s)]$.
	\end{list} 
\end{definition} 
We shall show that the family of set-valued mappings $\Psi=\{\Psi^{r,s}\}_{s\leq r}$ provides  a dynamics in the space $\ptd\times C(\td)$. First, notice that $\Psi^{s,s}(m,\phi)=\{(m,\phi)\}$.

\begin{proposition}\label{prop:semigroup} $\Psi$ satisfies the semigroup property i.e., for any $s_0,s_1,s_2\in [0,T]$, $s<r<\theta$,
	$$\Psi^{s_2,s_0}=\Psi^{s_2,s_1}\circ\Psi^{s_1,s_0}. $$
\end{proposition}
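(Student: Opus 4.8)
The plan is to prove the two inclusions $\Psi^{s_2,s_1}\circ\Psi^{s_1,s_0}\subset \Psi^{s_2,s_0}$ and $\Psi^{s_2,s_0}\subset\Psi^{s_2,s_1}\circ\Psi^{s_1,s_0}$ separately, using the concatenation and restriction properties of solutions to MFDI~(\ref{system:mfdi}). For the first inclusion, I would take $(\mu,\psi)\in(\Psi^{s_2,s_1}\circ\Psi^{s_1,s_0})(m,\phi)$, so there is an intermediate pair $(\lambda,\eta)$ with $(\lambda,\eta)\in\Psi^{s_1,s_0}(m,\phi)$ and $(\mu,\psi)\in\Psi^{s_2,s_1}(\lambda,\eta)$. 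This gives two flows $\nu_1(\cdot)\in\mathcal{N}_{s_0,s_1}$ and $\nu_2(\cdot)\in\mathcal{N}_{s_1,s_2}$ solving~(\ref{system:mfdi}), and one needs to glue them into a single solution on $[s_0,s_2]$. The subtlety is that $(\Psi2)$ pins down only the \emph{projection} $m(s_1)=\mathrm{p}_\#\nu_1(s_1)=\lambda=\mathrm{p}_\#\nu_2(s_1)$, not the full measure on $\tdr$; so $\nu_1(s_1)$ and $\nu_2(s_1)$ need not coincide. I would resolve this by invoking Proposition~\ref{prop:shift}: replace $\nu_2(\cdot)$ by a shifted flow $\bar\nu_2(\cdot)$ solving~(\ref{system:mfdi}) with the same projection, with $\bar\nu_2(s_1)=\nu_1(s_1)$, and with the $z$-average corrected according to item~4 of Proposition~\ref{prop:shift}. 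Then Proposition~\ref{prop:concatination} yields a solution $\nu(\cdot)$ on $[s_0,s_2]$ with $m(t)=\mathrm{p}_\#\nu(t)$ agreeing with the projections of $\nu_1$ and $\nu_2$ on the respective subintervals.

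With this glued flow in hand I would verify the three defining conditions of $\Psi^{s_2,s_0}$. Condition $(\Psi1)$, $m(s_0)=m$, $m(s_2)=\mu$, is immediate. Condition $(\Psi2)$, $\phi=B^{s_0,s_2}_{m(\cdot)}\psi$, follows from the backward propagator identity~(\ref{equality:B_semogroup}): $\phi=B^{s_0,s_1}_{m(\cdot)}\eta=B^{s_0,s_1}_{m(\cdot)}B^{s_1,s_2}_{m(\cdot)}\psi=B^{s_0,s_2}_{m(\cdot)}\psi$, using that the flow $m(\cdot)$ restricts correctly on each piece. For condition $(\Psi3)$, $[\psi,\nu(s_2)]\ge[\phi,\nu(s_0)]$, I would chain the two given inequalities: $[\psi,\bar\nu_2(s_2)]\ge[\eta,\bar\nu_2(s_1)]$ and $[\eta,\nu_1(s_1)]\ge[\phi,\nu_1(s_0)]$. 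Here one must be careful with the $z$-bookkeeping: the shift in Proposition~\ref{prop:shift} changes $[\eta,\bar\nu_2(s_1)]$ relative to $[\eta,\nu_2(s_1)]$ by exactly the additive constant $-\int z\,\nu_2(s_1,d(x,z))+\int z\,\nu_1(s_1,d(x,z))$, and the same constant shifts $[\psi,\bar\nu_2(s_2)]$ relative to $[\psi,\nu_2(s_2)]$, so the inequality $(\Psi3)$ for the $\Psi^{s_2,s_1}$-step is preserved under the shift; then $\bar\nu_2(s_1)=\nu_1(s_1)$ lets the two inequalities compose into $[\psi,\nu(s_2)]\ge[\phi,\nu(s_0)]$.

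For the reverse inclusion, I would start from $(\mu,\psi)\in\Psi^{s_2,s_0}(m,\phi)$ with a witnessing flow $\nu(\cdot)\in\mathcal{N}_{s_0,s_2}$ solving~(\ref{system:mfdi}). The restrictions $\nu_1(\cdot)=\nu(\cdot)|_{[s_0,s_1]}$ and $\nu_2(\cdot)=\nu(\cdot)|_{[s_1,s_2]}$ each solve~(\ref{system:mfdi}) (this is essentially the restriction of the representing $\chi$ to the corresponding path space, via Remark~\ref{remark:mfdi}). Setting $\lambda=\mathrm{p}_\#\nu(s_1)$ and $\eta=B^{s_1,s_2}_{m(\cdot)}\psi$, I claim $(\lambda,\eta)\in\Psi^{s_2,s_1}(\lambda,\eta)$... wait — I claim $(\mu,\psi)\in\Psi^{s_2,s_1}(\lambda,\eta)$ and $(\lambda,\eta)\in\Psi^{s_1,s_0}(m,\phi)$. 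The propagator conditions $(\Psi2)$ split by~(\ref{equality:B_semogroup}) and the definition of $\eta$. The inequality $(\Psi3)$ for the second step, $[\psi,\nu(s_2)]\ge[\eta,\nu(s_1)]$, requires the intermediate estimate $[\eta,\nu(s_1)]\ge[\phi,\nu(s_0)]$ for the first step; I would obtain both from the single given inequality $[\psi,\nu(s_2)]\ge[\phi,\nu(s_0)]$ together with the pointwise domination $B^{s_1,s_2}_{m(\cdot)}\psi(x(s_1))+z(s_1)\ge \psi(x(s_2))+z(s_2)$ valid along every trajectory in $\mathrm{SOL}$, integrated against $\chi$ (the same argument as in the proof of Proposition~\ref{prop:equivalence}). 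The main obstacle, and the place demanding the most care, is precisely this $z$-coordinate bookkeeping together with the fact that $(\Psi2)$ only controls the $\td$-projection of the flow: one must check that the shift in Proposition~\ref{prop:shift} is compatible with both $(\Psi2)$ (which depends only on $m(\cdot)=\mathrm{p}_\#\nu(\cdot)$, hence is unaffected) and $(\Psi3)$ (which is affected, but only by a cancelling additive constant).
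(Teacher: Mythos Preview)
Your approach is essentially the paper's: glue via Proposition~\ref{prop:shift} and Proposition~\ref{prop:concatination} for the inclusion $\Psi^{s_2,s_1}\circ\Psi^{s_1,s_0}\subset\Psi^{s_2,s_0}$, and restrict the witnessing $\chi$ for the converse, with the intermediate function $\eta=B^{s_1,s_2}_{m(\cdot)}\psi$. Your $z$-bookkeeping for the shift is correct and more explicit than the paper's ``one may assume $\nu_1(s_1)=\nu_2(s_1)$''.

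There is one small gap in the reverse inclusion. From the single pointwise domination you wrote, namely $\eta(x(s_1))+z(s_1)\ge\psi(x(s_2))+z(s_2)$ along $\mathrm{supp}(\chi)$, you obtain after integration only $[\eta,\nu(s_1)]\ge[\psi,\nu(s_2)]$; together with the given $[\psi,\nu(s_2)]\ge[\phi,\nu(s_0)]$ this yields the first-step inequality $[\eta,\nu(s_1)]\ge[\phi,\nu(s_0)]$, but not the second-step inequality $[\psi,\nu(s_2)]\ge[\eta,\nu(s_1)]$. You need to also use the $[s_0,s_1]$ domination $\phi(x(s_0))+z(s_0)\ge\eta(x(s_1))+z(s_1)$ (which holds because $\phi=B^{s_0,s_1}_{m(\cdot)}\eta$ by~(\ref{equality:B_semogroup})); integrating gives $[\phi,\nu(s_0)]\ge[\eta,\nu(s_1)]$, and then the three inequalities close into a cycle, forcing all of them to be equalities. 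The paper phrases the same step as ``$\chi$ is concentrated on the set of optimal motions'' and then invokes dynamic programming; your sandwich argument is the same content once you include both Bellman dominations.
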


Here we define the composition of multivalued maps in the usual manner, i.e. if $(X,\rho_X)$ is a metric space, then  the composition of multivalued mappings $\Phi_1,\Phi_2:X\rightrightarrows X$ is 
$$(\Phi_2\circ\Phi_1)(x)\triangleq \bigcup_{y\in \Phi_1(x)}\Phi_2(y).$$ 
\begin{proof}[Proof of Proposition \ref{prop:semigroup}]
	First, let us prove that $\Psi^{s_2,s_1}\circ\Psi^{s_1,s_0}\subset \Psi^{s_2,s_0}$.  Pick $(m^0,\phi^0),(m^1,\phi^1),(m^2,\phi^2)\in\ptd\times C(\td)$ such that
	$(m^1,\phi^1)\in \Psi^{s_1,s_0}(m^0,\phi^0)$, $(m^2,\phi^2)\in \Psi^{s_2,s_1}(m^1,\phi^1)$. We are to prove that $(m^2,\phi^2)\in \Psi^{s_2,s_0}(m^0,\phi^0)$. For $i=1,2$, let $\nu_i(\cdot)\in\mathcal{N}_{s_{i-1},s_i}$, $m_i(\cdot)\in\mathcal{M}_{s_{i-1},s_{i}}$ be such that
	$m_i(t)=\mathrm{p}_\# \nu_i(t)$, $\nu_i(\cdot)$ solves MFDI~(\ref{system:mfdi}) on $[s_{i-1},s_i]$, $m_i(s_{i-1})=m^{i-1}$, $m_i(s_i)=m^i$, $\phi^{i-1}=B^{s_{i-1},s_{i}}_{m_i(\cdot)}\phi^{i}$, $[\phi^i,\nu_i(s_i)]\geq [\phi^{i-1},\nu_i(s_{i-1})]$. Due to Proposition \ref{prop:shift}, one may assume that $\nu_1(s_1)=\nu_2(s_1)$. Let 
	$$\nu(t)\triangleq\left\{\begin{array}{cc}
	\nu_1(t), & t\in [s_0,s_1],\\
	\nu_2(t), & t\in [s_1,s_2],
	\end{array}
	\right. $$
	$m(t)\triangleq \mathrm{p}_\#\nu(t)$. Notice that $m(t)$ coincides with $m_1(t)$ on $[s_0,s_1]$ and with $m_2(t)$ on $[s_1,s_2]$. By Proposition \ref{prop:concatination} we have that $\nu(\cdot)$ is a solution to (\ref{system:mfdi}) on $[s_0,s_2]$. Using the semigroup property for the family of operators $B$ (see (\ref{equality:B_semogroup})), we get
	$$\phi^0=B^{s_0,s_2}_{m(\cdot)}\phi^2. $$ Further, $$[\phi^2,\nu(s_2)]=[\phi^2,\nu_2(s_2)]\geq  [\phi^1,\nu_2(s_1)]=[\phi^1,\nu_1(s_1)]\geq [\phi^0,\nu_1(s_0)]=[\phi^0,\nu(s_0)].$$
	Thus, we obtain the inclusion $\Psi^{s_2,s_1}\circ\Psi^{s_1,s_0}\subset \Psi^{s_2,s_0}$.
	
	Now we turn to the opposite inclusion. Pick $(m^0,\phi^0)$, $(m^2,\phi^2)$ such that 
	$(m^2,\phi^2)\in\Psi^{s_2,s_0}(m^0,\phi^0)$. We shall prove that there exists $(m^1,\phi^1)$ such that
	$(m^1,\phi^1)\in\Psi^{s_1,s_0}(m^0,\phi^0)$ and $(m^2,\phi^2)\in\Psi^{s_2,s_1}(m^1,\phi^1)$. Let $\nu(\cdot)\in\mathcal{N}_{s_0,s_2}$, $m(\cdot)\in \mathcal{M}_{s_0,s_2}$ satisfy conditions ($\Psi$1)--($\Psi$3) for $s=s_0$, $r=s_2$, $m=m^0$, $\phi=\phi^0$, $\mu=m^2$, $\psi=\phi^2$. There exists $\chi\in \mathcal{P}^1(\mathcal{C}_{s_0,s_2})$  such that 
	$\nu(t)=\hat{e}_t{}_\#\chi $ and $\mathrm{supp}(\chi)\subset \mathrm{SOL}(s_2,s_0,m(\cdot))$. Notice that the function
	$\phi^0$ is a value  of the problem
	\begin{equation}\label{problem:max_phi_2}
	\text{maximize }\phi^2(x(s_2))+z(s_2)-z(s_0) 
	\end{equation}
	\begin{equation}\label{problem:constrains_phi_2}
	\text{subject to }(x(\cdot),z(\cdot))\in\mathrm{SOL}(s_2,s_0,m(\cdot)). 
	\end{equation} Additionally, one can rewrite ($\Psi$3) in the form
	$$\int_{\mathcal{C}_{s_0,s_2}}(\phi^2(x(s_2))+(z(s_2)-z(s_0))\chi(d(x(\cdot),z(\cdot)))\geq \int_{\mathcal{C}_{s_0,s_2}} \phi^0(x(s_0))\chi(d(x(\cdot),z(\cdot))). $$ Using inclusion $\mathrm{supp}(\chi)\subset \mathrm{SOL}(s_2,s_0,m(\cdot))$ we conclude that $\chi$ is concentrated on the set of optimal motions to  problem (\ref{problem:max_phi_2}), (\ref{problem:constrains_phi_2}).
	
	Now set $m^1=m(s_1)$, $\phi^1=B^{s_1,s_2}_{m(\cdot)}\phi^2$. Further, for $t\in [s_{i-1},s_{i}]$, put  $\nu_i(t)$ be equal to $\nu(t)$, $m_i(t)\triangleq \mathrm{p}_\#\nu_i(t)$. The dynamic programming gives that
	$$\phi^{i-1}=B^{s_{i-1},s_i}_{m_i(\cdot)}\phi^{i}, \ \ i=1,2. $$ It remains to prove that \begin{equation}\label{ineq:prop_Psi_phi}
	[\phi^i,\nu_i(s_i)]\geq [\phi^{i-1},\nu_i(s_{i-1})].
	\end{equation} Let $\chi_i$ be a projection of $\chi$ on $\mathrm{C}_{s_{i-1},s_i}$. We have that $\nu_i(t)=\hat{e}_t{}_\#\chi_i$, $\mathrm{supp}(\chi_i)\subset \mathrm{SOL}(s_i,s_{i-1},m_i(\cdot))$. Moreover, since $\chi$ is concentrated on optimal motions to~(\ref{problem:max_phi_2}),~(\ref{problem:constrains_phi_2}), we have that each motion $(x(\cdot),z(\cdot))\in\mathrm{supp}(\chi_i)$ provides the solution to the problem 
	\begin{equation*}\label{problem:max_phi_i}
	\text{maximize }\phi^i(x(s_i))+z(s_i)-z(s_{i-1}) 
	\end{equation*}
	\begin{equation*}\label{problem:constrains_phi_i}
	\text{subject to }(x(\cdot),z(\cdot))\in\mathrm{SOL}(s_i,s_{i-1},m_i(\cdot)). 
	\end{equation*} Integrating this and taking into account the property $\mathrm{supp}(\chi_i)\subset \mathrm{SOL}(s_i,s_{i-1},m_i(\cdot))$, we get
	\begin{equation*}
	\int_{\mathcal{C}_{s_{i-1},s_i}}(\phi^i(x(s_i))+z(s_i))\chi_i(d(x(\cdot),z(\cdot)))\geq \int_{\mathcal{C}_{s_{i-1},s_i}} (\phi^{i-1}(x(s_{i-1}))+z(s_{i-1}))\chi_i(d(x(\cdot),z(\cdot))).
	\end{equation*}
	This implies (\ref{ineq:prop_Psi_phi}).
\end{proof}

Recall (see the Introduction) that we call the dynamics generated by the family $\{\Psi^{r,s}\}_{s\leq r}$ the mean field game dynamics.

\begin{definition}\label{def:dpp}
	We say that a upper semicontinuous   multifunction $\mathcal{V}:[0,T]\times \ptd\rightrightarrows C(\mathbb{T}^d)$ is viable with respect to the mean field game dynamics if, for any $s,r\in [0,T]$, $s\leq r$, $m\in\ptd$, $\phi\in \mathcal{V}(s,m)$, there exist $\mu\in \td$ and $\psi\in C(\td)$ such that 
	\begin{itemize}
		\item $(\mu,\psi)\in \Psi^{r,s}(m,\phi)$;
		\item $\psi\in \mathcal{V}(r,\mu)$.
	\end{itemize}
\end{definition}

\begin{remark}\label{remark:viability_concept} It is more accurate to say that the graph of the multifunction $\mathcal{V}$ is viable with respect to the mean field game dynamics. However, for the sake of shortness we will say that the multifunction $\mathcal{V}$ itself is viable.
\end{remark}

\begin{remark}\label{remark:max_viability}
	One can prove that the maximal value function is viable with respect to the mean field game dynamics.
\end{remark}

The link between viability property and value function is given in the following statement.

\begin{theorem}\label{th:DPP}
	Assume that a upper semicontinuous multifunction $\mathcal{V}:\ir\rightrightarrows C(\mathbb{T}^d)$  is viable with respect to the mean field game dynamics  and $\mathcal{V}(T,m)=\{\sigma(\cdot,m)\}$. Then $\mathcal{V}$ is a value multifunction.
\end{theorem}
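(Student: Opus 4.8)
The plan is to start from an arbitrary initial datum $(t_0, m_0)$ with $\phi \in \mathcal{V}(t_0, m_0)$ and to iterate the viability property along a partition of $[t_0, T]$ to build, in the limit, an honest solution of the mean field game that realizes $\phi$ at time $t_0$. First I would fix a sequence of partitions $t_0 = t^n_0 < t^n_1 < \dots < t^n_{k_n} = T$ with mesh tending to zero. Applying Definition~\ref{def:dpp} successively on each subinterval $[t^n_j, t^n_{j+1}]$, I obtain points $(m^n_j, \phi^n_j)$ with $(m^n_{j+1}, \phi^n_{j+1}) \in \Psi^{t^n_{j+1}, t^n_j}(m^n_j, \phi^n_j)$ and $\phi^n_j \in \mathcal{V}(t^n_j, m^n_j)$, where $(m^n_0, \phi^n_0) = (m_0, \phi)$. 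The semigroup property of Proposition~\ref{prop:semigroup} then lets me concatenate these into a single transition $(m^n_{k_n}, \phi^n_{k_n}) \in \Psi^{T, t_0}(m_0, \phi)$, which by Definition~\ref{def:flow} is witnessed by a solution $\nu^n(\cdot)$ of MFDI~(\ref{system:mfdi}) on $[t_0, T]$ with $\mathrm{p}_\#\nu^n(t_0) = m_0$, together with the identity $\phi = B^{t_0,T}_{m^n(\cdot)}\phi^n_{k_n}$ and the inequality $[\phi^n_{k_n}, \nu^n(T)] \geq [\phi, \nu^n(t_0)]$.

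Next I would normalize via Proposition~\ref{prop:shift} so that $\nu^n(t_0) = \widehat{m_0}$, then invoke Proposition~\ref{prop:limit} to extract a subsequence along which $\nu^n(\cdot)$ converges uniformly in $W_1$ to a solution $\nu(\cdot)$ of~(\ref{system:mfdi}) with $\nu(t_0) = \widehat{m_0}$; set $m(t) = \mathrm{p}_\#\nu(t)$. The terminal functions $\phi^n_{k_n} = B^{t^n_{k_n-1}, T}_{m^n(\cdot)}(\cdot)$ lie in $\mathcal{V}(t^n_{k_n}, m^n_{k_n}) = \mathcal{V}(T, m^n_{k_n}) = \{\sigma(\cdot, m^n_{k_n})\}$, hence $\phi^n_{k_n} = \sigma(\cdot, m^n(T))$, and by Lemma~\ref{lm:lipschitz} these are equi-Lipschitz; since $m^n(T) \to m(T)$ in $W_1$ and $\sigma$ is continuous, $\phi^n_{k_n} \to \sigma(\cdot, m(T))$ uniformly. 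Passing to the limit in $\phi = B^{t_0, T}_{m^n(\cdot)}\phi^n_{k_n}$ using the continuity estimate of Lemma~\ref{lm:B_continuity} gives $\phi = B^{t_0, T}_{m(\cdot)}\sigma(\cdot, m(T))$, and passing to the limit in $[\phi^n_{k_n}, \nu^n(T)] \geq [\phi, \nu^n(t_0)]$ using~(\ref{estima:action_continuous}) gives $[\sigma(\cdot, m(T)), \nu(T)] \geq [\phi, \nu(t_0)]$. Defining $V(s, \cdot) = B^{s,T}_{m(\cdot)}\sigma(\cdot, m(T))$ for $s \in [t_0, T]$, Proposition~\ref{prop:equivalence} then certifies that $(V, m(\cdot))$ is a solution of the mean field game with $V(t_0, \cdot) = \phi$ and $m(t_0) = m_0$, which is exactly Definition~\ref{def:value_multifunction}.

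The main obstacle I anticipate is the limiting argument for condition ($\Psi$3) / condition~3 of Proposition~\ref{prop:equivalence}: I need the inequality $[\phi^n_{k_n}, \nu^n(T)] \geq [\phi, \nu^n(t_0)]$ to survive passage to the limit, which requires the equi-Lipschitz bound on $\phi$ and $\phi^n_{k_n}$ (available from Lemma~\ref{lm:lipschitz}, noting $\sigma(\cdot, m)$ is $\varkappa$-Lipschitz by (M\ref{cond:lip+sigma})) so that~(\ref{estima:action_continuous}) applies with a uniform constant $K$. A subtler point is bookkeeping across the $n$-dependent partitions: the intermediate data $(m^n_j, \phi^n_j)$ and the witnessing flows need to be glued compatibly, and one must check that the concatenated flow's restriction to $[t_0, T]$ genuinely solves~(\ref{system:mfdi}) with the right projected marginals — this is where Propositions~\ref{prop:concatination} and~\ref{prop:shift} do the work, exactly as in the proof of Proposition~\ref{prop:semigroup}. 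Everything else is a routine compactness-and-continuity passage of the kind already carried out in the proof of Proposition~\ref{prop:value_limit}.
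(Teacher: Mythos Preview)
Your argument has one genuine gap. Proposition~\ref{prop:equivalence} requires condition~3, namely $[\sigma(\cdot,m(T)),\nu(T)]\geq [V(s,\cdot),\nu(s)]$, to hold for \emph{every} $s\in[t_0,T]$, whereas your limiting procedure only delivers it at $s=t_0$ (since $(\Psi3)$ for the collapsed map $\Psi^{T,t_0}$ is an inequality at the endpoints only). The fix is short: for any $(x(\cdot),z(\cdot))\in\mathrm{supp}(\chi)\subset\mathrm{SOL}(T,t_0,m(\cdot))$ and any $t_0\leq s\leq r\leq T$, the semigroup property~(\ref{equality:B_semogroup}) gives $V(s,\cdot)=B^{s,r}_{m(\cdot)}V(r,\cdot)$, hence $V(s,x(s))+z(s)\geq V(r,x(r))+z(r)$. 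Integrating against $\chi$ shows that $s\mapsto[V(s,\cdot),\nu(s)]$ is non-increasing. Together with your inequality at $s=t_0$ and the identity $V(T,\cdot)=\sigma(\cdot,m(T))$, this forces $[V(s,\cdot),\nu(s)]=[\sigma(\cdot,m(T)),\nu(T)]$ for all $s$, which is what Proposition~\ref{prop:equivalence} asks.

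Once you notice this, your partitioning and limiting machinery becomes superfluous: Definition~\ref{def:dpp} allows any $r\in[s,T]$, so you may apply viability \emph{once} with $s=t_0$, $r=T$ to obtain $(\mu,\psi)\in\Psi^{T,t_0}(m_0,\phi)$ with $\psi\in\mathcal{V}(T,\mu)=\{\sigma(\cdot,\mu)\}$; the witnessing $\nu(\cdot)$ from Definition~\ref{def:flow} already solves~(\ref{system:mfdi}) on the whole of $[t_0,T]$, and no concatenation or compactness argument is needed. The sequence of partitions, the invocation of Proposition~\ref{prop:limit}, and the equi-Lipschitz compactness of the $\phi^n_{k_n}$ are all unnecessary. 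By contrast, the paper's proof does not invoke the semigroup property of $\Psi$; it keeps the partition explicit, chains the step inequalities $[\phi_N^i,\nu_N(t_N^i)]\geq[\phi_N^{i-1},\nu_N(t_N^{i-1})]$ to obtain~(\ref{ineq:action_V_nu}) at partition points, and then passes to the limit to reach arbitrary $s$. Your route via Proposition~\ref{prop:semigroup} plus the monotonicity observation above is shorter.
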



\begin{proof}
	We shall prove that if $\mathcal{V}$ is viable with respect to the mean field game dynamics, then, for any $t_*\in [0,T]$, $m_*\in\ptd$ and $\phi_*\in\mathcal{V}(t_*,m_*)$, there exists a  solution to (\ref{sys:dynamics}), (\ref{sys:payoff}) $(V,m(\cdot))$ such that $V(t_*,\cdot)=\phi_*(\cdot)$ and $m(t_*)=m_*$.
	
	Let $N$ be a natural number, and let $t^i_N\triangleq t_*+(T-t_*)/N$. By assumption we have that there exists a sequence of pairs $\{(\mu_N^i,\phi_N^i)\}_{i=0}^N\subset \mathcal{P}^1(\td)\times C(\td)$ satisfying the following conditions:
	\begin{itemize}
		\item $\phi^i_N\in\mathcal{V}(t_N^i,\mu_N^i)$, $i=0,1,\ldots,N$;
		\item $\phi^0_N=\phi_*$, $\mu^0_N=m_*$;
		\item $(\mu_N^i,\phi_N^{i})\in \Psi^{t_N^i,t_N^{i-1}}(\mu_N^{i-1},\phi_N^{i-1})$, $i=1,\ldots,N$;
		\item $\phi_N^N(\cdot)=\sigma(\cdot,\mu_N^N)$.
	\end{itemize} 
	Since $(\mu_N^i,\psi_N^i)\in\Psi^{t_N^i,t_N^{i-1}}(\mu_N^{i-1},\psi_N^{i-1})$, there exist $\nu^i_N(\cdot)\in\mathcal{N}_{t_N^{i-1},t_N^i}$ and $m^i_N(\cdot)\in\mathcal{M}_{t_N^{i-1},t_N^i}$  such that
	\begin{itemize}
		\item $\nu_N^i(\cdot)$ is a solution of (\ref{system:mfdi}) on $[t^{i-1}_N,t^i_N]$;
		\item $m_N^i(t)=\mathrm{p}_\#\nu_N^i(t)$ when $t\in [t^{i-1}_N,t^i_N]$;
		\item $m_N^i(t_N^{i-1})=\mu_N^{i-1}$, $m_N^i(t_N^i)=\mu_N^i$;
		\item $\phi_N^i=B^{t_N^i,t_N^{i-1}}_{m_N^i(\cdot)}\phi_N^{i-1}$;
		\item $[\phi^i_N,\nu_N^i(t_N^i)]\geq [\phi^{i-1}_N,\nu_N^i(t_N^{i-1})]$.
	\end{itemize}
	By Proposition \ref{prop:shift} we can assume without loss of generality that $\nu_N^{i-1}(t_N^{i-1})=\nu_N^{i}(t_N^{i-1})$. Define $\nu_N(\cdot)\in \mathcal{N}_{t_*,T}$ by the rule: if $t\in [t_N^{i-1},t_N^i]$, then
	$$\nu_N(t)\triangleq \nu_N^i(t). $$  Further, let $m_N(t)\triangleq \mathrm{p}_\#\nu_N(t)$.  By Proposition \ref{prop:concatination} we have that $\nu_N(\cdot)$ solves (\ref{system:mfdi}). Moreover,  $m_N(t_*)=m_*$.  Proposition \ref{prop:limit} implies that there exist a flow of probabilities $\nu(\cdot)\in \mathcal{N}_{t_*,T}$ and a sequence $\{N_k\}_{k=1}^\infty$ such that
	$\nu(\cdot)$ is a solution of (\ref{system:mfdi}), and $$\lim_{k\rightarrow\infty}\sup_{t\in [t_*,T]}W_1(\nu_{N_k}(t),\nu(t))=0. $$ Put \begin{equation}\label{intro:m_limit}
	m(t)\triangleq \mathrm{p}_\#\nu(t).
	\end{equation} 
	
	For $s\in [t_*,T]$, set 
	\begin{equation}\label{intro:V_N}
	V_N(s,\cdot)=B^{s,T}_{m_N(\cdot)}\sigma(\cdot,m_N(T)).
	\end{equation}
	By Lemma \ref{lm:lipschitz} the functions $ V_N$  are $C_1$-Lipschitz continuous functions, where the constant $C_1$ does not depend on $N$. Hence, without loss of generality, one can assume that the sequence $\{V_{N_k}\}$ converges to some function $V\in C([t_*,T]\times\td)$. 
	
	Let us prove that $(V,m(\cdot))$ is a solution of mean field game (\ref{sys:dynamics}), (\ref{sys:payoff}). To this end we check the conditions of Proposition \ref{prop:equivalence}. 
	By construction we have that $m(t)=\mathrm{p}_\#\nu(t)$, where $\nu(\cdot)$ solves MFDI (\ref{system:mfdi}).
	Notice that $W_1(m_{N_k}(t),m(t))\leq W_1(\nu_{N_k}(t),\nu(t))$. Thus,
	\begin{equation}\label{converge:m_N_k_m}
	\lim_{k\rightarrow\infty}\sup_{t\in [t_*,T]}W_1(m_{N_k}(t),m(t))=0.
	\end{equation} It follows from (\ref{intro:V_N}) that, for any $k$,
	\begin{equation*}
	\|V(s,\cdot)-B_{m(\cdot)}^{s,T}\sigma(\cdot,m(T))\|
	\leq
	\|V(s,\cdot)-V_{N_k}(s,\cdot)\|  +\|B_{m_{N_k}(\cdot)}^{s,T}\sigma(\cdot,m_{N_k}(T))-B_{m(\cdot)}^{s,T}\sigma(\cdot,m(T))\|.
	\end{equation*}
	Using Lemma \ref{lm:B_continuity}, we estimate the right-hand side of this inequality and get
	\begin{equation*}\begin{split}\|V(s,\cdot)-B_{m(\cdot)}^{s,T}&\sigma(\cdot,m(T))\|\\ \leq \|V(s,\cdot&)-V_{N_k}(s,\cdot)\|+ \|\sigma(\cdot,m(T))-\sigma(\cdot,m_{N_k}(T))\|\\
	&+L(\varkappa e^{LT}+LTe^{LT}+1)(T-s)\sup_{t\in [t_*,T]}W_1(m_{N_k}(t),m(t)).
	\end{split}
	\end{equation*}
	Passing to the limit when $k\rightarrow 0$ and using (\ref{converge:m_N_k_m}), we conclude that
	\begin{equation}\label{equal:V_B}
	\|V(s,\cdot)-B_{m(\cdot)}^{s,T}\sigma(\cdot,m(T))\|=0.
	\end{equation} 
	It remains to prove that 
	\begin{equation}\label{ineq:action_V_nu}
	[\sigma(\cdot,m(T)),\nu(T)]\geq [V(s,\cdot),\nu(s)].
	\end{equation} To this end notice that 
	$\nu_N(t)=\nu_N^i(t)$, for $t\in [t_N^{i-1},t_N^i]$, $V_N(t_N^i,\cdot)=\phi_N^i(\cdot)$ and $V_N(T,\cdot)=\sigma(\cdot,m_N(T))$. This and the inequalities $[\phi_N^i,\nu_N^i(t_N^i)]\geq [\phi_N^{i-1},\nu_N^i(t_N^{i-1})]$ yield that
	\begin{equation*}
	[\sigma(\cdot,m_N(T)),\nu_N(T)]\geq [V_N(t_N^{i-1},\cdot),\nu_N(t_N^{i-1})],\ \ i=1,\ldots,N.
	\end{equation*}
	Further, let $i$ be such that $s\in [t_N^{i-1},t_N^i]$. We have that
	$\|V_N(s,\cdot)-V_N(t_N^{i-1},\cdot)\|\leq C_1/N$. Since $\nu_N(\cdot)$ solves MFDI (\ref{system:mfdi}),
	$W_1(\nu_N(s),\nu_N(t_N^{i-1}))\leq 2R/N$. Therefore, by (\ref{estima:action_continuous})
	\begin{equation}\label{ineq:sigma_n_N_v_N}
	[\sigma(\cdot,m_N(T)),\nu_N(T)]\geq [V_N(s,\cdot),\nu_N(s)]-C_1(1+2R)/N.
	\end{equation}
	Further,  the functions $x\mapsto \sigma(x,m(T))$, $x\mapsto V(s,x)$ are Lipschitz continuous with the constants $\varkappa$ and $C_1$ respectively. Hence, using (\ref{ineq:sigma_n_N_v_N}) for $N=N_k$ and (\ref{estima:action_continuous}), we conclude
	\begin{equation*}
	\begin{split}
	[\sigma(\cdot,m(T)),\nu(T)]\geq [V(s,\cdot),&\nu(s)]-C_1(1+2R)/N_k-\|\sigma(\cdot,m(T))-\sigma(\cdot,m_{N_k}(T))\|\\&- \|V(s,\cdot)-V_{N_k}(s,\cdot)\|-\varkappa W_1(\nu(T),\nu_{N_k}(T))-C_1W_1(\nu(s),\nu_{N_k}(s)).
	\end{split}
	\end{equation*}
	Passing to the limit, we get (\ref{ineq:action_V_nu}).
	
	The fact that $\nu(\cdot)$ is a solution to (\ref{system:mfdi}), equalities (\ref{intro:m_limit}), (\ref{equal:V_B})and inequality (\ref{ineq:action_V_nu}) imply that $(V,m(\cdot))$ is a solution of mean field game (\ref{sys:dynamics}), (\ref{sys:payoff}) (see Proposition \ref{prop:equivalence}). Furthermore, by construction $m(t_*)=m_*$ and $V(t_*,m_*)=\phi_*$. Thus, $\mathcal{V}$ is a value multifunction. 
\end{proof}

\section{Statement of the viability theorem}\label{sect:viability_theorem}

In this section we formulate the infinitesimal form of the viability property for mean field game dynamics.

To this end we use the probabilities on tangent bundles to $\td\times\{0\}$ and $\tdr$. Let $m\in\ptd$. Denote by $\mathcal{L}(m)$ the set of probabilities $\beta\in\mathcal{P}^1(\td\times\rdp)$ with the marginal on $\td$ equal to $m$. Analogously, if $\nu\in\mathcal{P}^1(\tdr)$, then $\mathcal{L}_*(\nu)$ stands for the set of probabilities $\gamma\in\mathcal{P}^1(\tdr\times \rdp)$ with the marginal on $\tdr$ equal to $\nu$. If $c>0$, then put 
\begin{equation}\label{intro:L_c}
\mathcal{L}^c(m)\triangleq \{\beta\in\mathcal{L}(m):\mathrm{supp}\beta\subset \td\times \mathbb{B}_c\times [-c,c]\},
\end{equation} 
\begin{equation}\label{intro:L_star_c}
\mathcal{L}^c_*(m)\triangleq \{\gamma\in\mathcal{L}_*(m):\mathrm{supp}\gamma\subset \tdr\times \mathbb{B}_c\times [-c,c]\}.
\end{equation} 
Here $\mathbb{B}_c$ stands for the closed ball in $\rd$ of  the radius $c$ centered at the origin. 

If $m\in\ptd$, $s,r\in [0,T], $ $s\leq r$, then denote by $A^{s,r}_m$ the operator  on $C(\mathbb{T}^d)$ acting by the rule
\begin{equation}\label{intro:A_Bellman}
(A^{s,r}_{m}\phi)(x)\triangleq \sup\{\phi(x+(r-s) a)+(r-s) b:(a,b)\in F(s,x,m)\}. 
\end{equation} Here $F$ is defined by (\ref{intro:F}). For $t\geq 0$, let the operator $\Theta^\tau:\td\times\rdp\rightarrow\tdr $ be given by
\begin{equation}\label{intro:Theta}
\Theta^\tau(x,a,b)\triangleq (x+\tau a,\tau b). 
\end{equation}

Let $\mathcal{V}:[0,T]\times\ptd\rightrightarrows C(\mathbb{T}^d)$ be upper semicontinuous, $t\in [0,T]$, $m\in \ptd$, $\phi\in C(\mathbb{T}^d)$. Now, we turn to the definition of set-valued derivative of the multifunction $\mathcal{V}$ at $t,m,\phi$ by virtue of $F$ under constraints determined by constant~$c$. We denote this derivative by  $\mathcal{D}^c_F\mathcal{V}(t,m,\phi)$. 

\begin{definition}\label{def:tangent}
	A probability $\beta\in\mathcal{L}^c(m)$ belongs to $\mathcal{D}_{F}^c\mathcal{V}(t,m,\phi)$ if    there exist sequences $\{\tau_n\}_{n=1}^\infty\subset (0,+\infty)$, $\{\beta_n\}_{n=1}^\infty\subset\mathcal{L}^c(m)$ and $\{\phi_n\}_{n=1}^\infty\subset\ C(\mathbb{T}^d)$ satisfying the following properties for $\nu_n\triangleq \Theta^{\tau_n}{}_\#\beta_n$ and $m_n\triangleq \mathrm{p}_\#\nu_n$:
	\begin{enumerate}
		\item $\tau_n,W_1(\beta,\beta_n)\rightarrow 0$ as $n\rightarrow\infty$;
		\item $\phi_n\in\mathcal{V}(t+\tau_n,m_n)$;
		\item\label{def_tangent:cond_proximate} $$\lim_{n\rightarrow\infty}\frac{\|A^{t,t+\tau_n}_m\phi_n-\phi\|}{\tau_n}=0; $$
		\item\label{def_tangent:cond_optima} 
		$$\lim_{n\rightarrow\infty}\frac{[\phi_n,\nu_n]-[\phi,\widehat{m}]}{\tau_n}\geq 0;$$
		\item $$\int_{\td\times\rdp}\mathrm{dist}(v;F(t,x,m))\beta(d(x,v))=0. $$		
	\end{enumerate}
\end{definition}

For $M,C>0$, let $\mathrm{BL}_{M,C}$ denote the set of  functions $\phi\in \mathrm{Lip}_C(\td)$ such that
$$\|\phi\|\leq M.  $$

\begin{theorem}\label{th:viability}
	Assume that the upper semicontinuous  multifunction $\mathcal{V}:[0,T]\times\ptd\rightrightarrows C(\td)$ has nonempty values and there exist constants $M$ and $C$ such that, for any $t\in [0,T]$, $m\in\ptd$, 
	$$\mathcal{V}(t,m)\subset \mathrm{BL}_{M,C}(\td).$$
	Then, $\mathcal{V}$ is viable with respect to the mean field game dynamics if and only if, there exists a constant $c>0$ such that, for any $t\in [0,T]$, $m\in\ptd$, $\phi\in\mathcal{V}(t,m)$,
	$$\mathcal{D}^c_F(t,m,\phi)\neq\varnothing. $$
\end{theorem}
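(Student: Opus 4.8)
The statement is an equivalence and splits into two implications. That viability implies $\mathcal D^c_F\mathcal V\neq\varnothing$ is a compactness extraction: from the configurations reachable by the mean field game dynamics over vanishing horizons one distils a limiting ``velocity measure'' and checks the five requirements of Definition~\ref{def:tangent}. The converse is a genuine viability theorem, proved by an Euler-type scheme driven by the set-valued derivative, followed by a passage to the limit modelled on the proof of Theorem~\ref{th:DPP}. The device shared by both parts is the short-time comparison of the frozen one-step operator $A^{t,t+\tau}_m$ with the propagator $B^{t,t+\tau}_{m(\cdot)}$, namely $\|A^{t,t+\tau}_m\psi-B^{t,t+\tau}_{m(\cdot)}\psi\|=o(\tau)$ as $\tau\downarrow0$, uniformly over $\psi\in\mathrm{Lip}_C(\td)$ and flows $m(\cdot)$ with $m(t)=m$; it is obtained by comparing a trajectory of~(\ref{incl:F}) on $[t,t+\tau]$ with its Euler polygonal, using (M3), (M4), the velocity bound~$R$, and the $R$-Lipschitz dependence of the flow on time, exactly as in Lemma~\ref{lm:B_continuity}.

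\textbf{Necessity.} Fix $t$, $m$, $\phi\in\mathcal V(t,m)$ and set $c:=R$. Choose $\tau_n\downarrow0$. Viability (Definition~\ref{def:dpp}) applied with $s=t$, $r=t+\tau_n$ yields $\mu_n\in\ptd$, $\psi_n\in\mathcal V(t+\tau_n,\mu_n)$, $(\mu_n,\psi_n)\in\Psi^{t+\tau_n,t}(m,\phi)$ and an associated solution $\nu_n(\cdot)$ of~(\ref{system:mfdi}); by Proposition~\ref{prop:shift} we may assume $\nu_n(t)=\widehat m$, and we let $\chi_n$ represent $\nu_n(\cdot)$. Put $\phi_n:=\psi_n$ and $\beta_n:=(G_n)_\#\chi_n$, where $G_n(x(\cdot),z(\cdot)):=(x(t),\tau_n^{-1}(x(t+\tau_n)-x(t)),\tau_n^{-1}(z(t+\tau_n)-z(t)))$. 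Since velocities are bounded by $R$ and $e_t{}_\#\chi_n=m$, we have $\beta_n\in\mathcal L^c(m)$ with supports in the fixed compact $\td\times\mathbb B_R\times[-R,R]$, so a subsequence converges in $W_1$ to some $\beta\in\mathcal L^c(m)$ (marginal and support constraints pass to the limit), giving condition~1 of Definition~\ref{def:tangent}. A direct computation shows $\Theta^{\tau_n}{}_\#\beta_n=\hat{e}_{t+\tau_n}{}_\#\chi_n=\nu_n(t+\tau_n)$ (recall $z(t)=0$), whence $m_n=\mathrm{p}_\#\nu_n(t+\tau_n)=\mu_n$, so that $\phi_n\in\mathcal V(t+\tau_n,m_n)$ (condition~2). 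Condition~4 is immediate from ($\Psi$3): $[\phi_n,\nu_n]=[\psi_n,\nu_n(t+\tau_n)]\ge[\phi,\nu_n(t)]=[\phi,\widehat m]$. Condition~5 uses convexity of $F$: since $\dot w(\theta)\in\widehat F(\theta,w(\theta),\nu_n(\theta))\subset F(t,x(t),m)+\delta_n\mathbb B$ with $\delta_n:=\alpha(\tau_n)+2LR\tau_n$ (by (M3), (M4) and the $R$-Lipschitz-in-time bound on the flow), the time-averaged velocity recorded by $G_n$ lies in $F(t,x(t),m)+\delta_n\mathbb B$, hence $\int\mathrm{dist}(v;F(t,x,m))\beta_n(d(x,v))\le\delta_n\to0$; the integrand being Lipschitz in $(x,v)$ and the supports uniformly compact, the integral passes to the limit. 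Finally, ($\Psi$2) reads $\phi=B^{t,t+\tau_n}_{m_n(\cdot)}\phi_n$, so the short-time comparison gives $\|A^{t,t+\tau_n}_m\phi_n-\phi\|/\tau_n\to0$, which is condition~3.

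\textbf{Sufficiency.} Fix $s\le r$, $m$, $\phi\in\mathcal V(s,m)$. For each $N$ build recursively a partition $s=t_0<\dots<t_K=r$ and data $(\beta^j,m_j,\phi_j)$ with $\phi_0=\phi$, $m_0=m$, $\phi_j\in\mathcal V(t_j,m_j)\subset\mathrm{BL}_{M,C}$: at a node $(t_{j-1},m_{j-1},\phi_{j-1})$ the hypothesis $\mathcal D^c_F\mathcal V(t_{j-1},m_{j-1},\phi_{j-1})\neq\varnothing$ and Definition~\ref{def:tangent} furnish a tangent measure $\beta^j\in\mathcal L^c(m_{j-1})$ and a step $\tau_j\in(0,\min\{1/N,r-t_{j-1}\}]$ such that, with $m_j:=\mathrm{p}_\#\Theta^{\tau_j}{}_\#\beta^j$, one has $\phi_j\in\mathcal V(t_j,m_j)$, $\|A^{t_{j-1},t_j}_{m_{j-1}}\phi_j-\phi_{j-1}\|\le\tau_j/N$, $[\phi_j,\Theta^{\tau_j}{}_\#\beta^j]-[\phi_{j-1},\widehat{m_{j-1}}]\ge-\tau_j/N$, and $\int\mathrm{dist}(v;F(t_{j-1},x,m_{j-1}))\beta^j(d(x,v))\le1/N$. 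Since the $\tau_j$ are not bounded below a priori, a maximality (Zorn) argument over such partial trajectories is needed to guarantee that the scheme reaches $r$ after finitely many steps. Concatenating the straight-line flows $\theta\mapsto\Theta^{\theta-t_{j-1}}{}_\#\beta^j$, glued through the $z$-translations of Proposition~\ref{prop:shift} as in the proof of Theorem~\ref{th:DPP}, produces an approximate solution $\widetilde\nu_N(\cdot)$ of~(\ref{system:mfdi}) on $[s,r]$ starting at $\widehat m$, whose curves satisfy~(\ref{incl:F}) up to an error tending to $0$ (by the last bound in the construction, the velocity bound~$c$, and (M3), (M4)). Passing to a subsequence: the terminal functions $\phi_K^N$ converge in $C(\td)$ to some $\psi$ (compactness of $\mathrm{BL}_{M,C}$), and $\widetilde\nu_N(\cdot)$ converges uniformly to a \emph{genuine} solution $\nu(\cdot)$ of~(\ref{system:mfdi}) with $\mathrm{p}_\#\nu(s)=m$, by a closure property of mean field type differential inclusions (tightness of the uniformly-bounded-velocity curves, convexity and continuity of $\widehat F$; cf.\ the auxiliary lemmas of Section~\ref{sect:auxiliary}). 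Set $\mu:=\mathrm{p}_\#\nu(r)$. Then $(\mu,\psi)\in\Psi^{r,s}(m,\phi)$: ($\Psi$1) is immediate; ($\Psi$2), i.e. $\phi=B^{s,r}_{\mathrm{p}_\#\nu(\cdot)}\psi$, holds because $A^{t_0,t_1}_{m_0}\circ\dots\circ A^{t_{K-1},t_K}_{m_{K-1}}$ applied to $\phi_K^N$ equals $\phi$ up to an error $\le(r-s)/N$ and differs from $B^{s,r}_{\mathrm{p}_\#\widetilde\nu_N(\cdot)}\phi_K^N$ by an error vanishing with the mesh (iterate the short-time comparison, both $A$ and $B$ being non-expansive in the sup-norm), after which $B^{s,r}_{\mathrm{p}_\#\widetilde\nu_N(\cdot)}\phi_K^N\to B^{s,r}_{\mathrm{p}_\#\nu(\cdot)}\psi$ by Lemma~\ref{lm:B_continuity}; ($\Psi$3), i.e. $[\psi,\nu(r)]\ge[\phi,\widehat m]$, follows by summing the inequalities $[\phi_j,\Theta^{\tau_j}{}_\#\beta^j]-[\phi_{j-1},\widehat{m_{j-1}}]\ge-\tau_j/N$, telescoping the $z$-shift contributions so the sum collapses to $[\phi_K^N,\widetilde\nu_N(r)]-[\phi,\widehat m]\ge-(r-s)/N$, and passing to the limit via~(\ref{estima:action_continuous}), exactly as in the proof of Theorem~\ref{th:DPP}. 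Finally $\phi_K^N\in\mathcal V(r,m_K^N)$ with $m_K^N\to\mu$ and $\phi_K^N\to\psi$, so upper semicontinuity of $\mathcal V$ gives $\psi\in\mathcal V(r,\mu)$; hence $\mathcal V$ is viable.

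\textbf{Main obstacle.} The crux is the sufficiency direction rather than any single estimate. Three points demand care: (i)~keeping the Euler scheme from stalling, since the steps supplied by Definition~\ref{def:tangent} are not bounded below a priori, which forces the maximality argument to reach the terminal time~$r$; (ii)~controlling the accumulation of the $o(\tau)$ defects from conditions~3--5 of Definition~\ref{def:tangent} along a partition of total length $r-s$; and (iii)~upgrading the limit of the approximate mean field type differential inclusion solutions to an exact one, and verifying that it still matches the propagator $B$ and the action inequality in the limit. The latter two rest on the closure property of~(\ref{system:mfdi}) and, repeatedly, on the short-time comparison $\|A^{t,t+\tau}_m\psi-B^{t,t+\tau}_{m(\cdot)}\psi\|=o(\tau)$ linking the infinitesimal operator $A$ with the propagator $B$. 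Once these are available, the necessity direction is essentially routine compactness.
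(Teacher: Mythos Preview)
Your necessity argument is the paper's: your $G_n$ is its operator $\Delta^{s,r}$, and the five verifications of Definition~\ref{def:tangent} are carried out in the same way (the $A$--$B$ comparison for condition~3, the convexity-plus-modulus estimate for condition~5, etc.). The sufficiency argument also has the same global architecture---Euler scheme driven by the tangent measures, then a compactness limit using the $A$--$B$ comparison (Lemma~\ref{lm:bellman_close}), the closure of the MFDI, and the telescoping of $[\cdot,\cdot]$---but it diverges from the paper exactly at the point you flag as obstacle~(i), and your resolution of that point is not correct as stated.

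The paper does \emph{not} invoke Zorn. It proves a uniform-step lemma (Lemma~\ref{lm:step}): for each accuracy level $n$ there exists $\theta_n>0$ such that from \emph{every} admissible triple $(t,\nu,\phi)$ with $t\le T-1/n$ one can make an admissible step of length at least~$\theta_n$. The mechanism is an open-cover argument on the compact set $\mathcal Z_n$ of such triples: at each $(s,\eta,\psi)$ the tangent hypothesis produces a specific target $(s^+_{s,\eta,\psi},\eta^+_{s,\eta,\psi},\psi^+_{s,\eta,\psi})$ at distance $2\vartheta_{s,\eta,\psi}$, and the set $\mathcal E_{s,\eta,\psi}$ of triples that can reach that \emph{same} target with errors~$<1/n$ is shown to be open (using Lemmas~\ref{lm:Theta_pi},~\ref{lm:A_continuity} and the composition $\pi^0*\gamma$ to transport the velocity measure); a finite subcover then gives $\theta_n=\min_i\vartheta_{s_i,\eta_i,\psi_i}$. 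With this in hand the scheme reaches the interval $(r-1/n,r]$ in at most $(r-s)/\theta_n$ steps, and the subsequent limit is carried out over a finite partition (Lemmas~\ref{lm:nu_j_mu_j}--\ref{lm:action_n}).

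Your Zorn route can be salvaged, but not as written: a maximal partial trajectory need not have finitely many nodes, so the claim that Zorn ``guarantees the scheme reaches $r$ after finitely many steps'' is false. A correct version must allow countably many nodes accumulating at some $t^*\le r$, prove that $(m_j,\phi_j)$ actually converges (this does hold: $W_1(m_j,m_{j-1})\le c\tau_j$ and $\|\phi_j-\phi_{j-1}\|\le((C+1)R+1/N)\tau_j$ from the $A$-estimate, so both sequences are Cauchy), invoke upper semicontinuity of $\mathcal V$ to place the limit in the graph, and extend past $t^*$ to contradict maximality. Even then, your subsequent concatenation and limit steps presuppose a finite partition and would have to be reworked. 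The paper's compactness lemma is the substantive device that sidesteps all of this, and it is the main technical content your sketch is missing. A secondary difference: the paper runs the whole scheme in the extended space $\tdr$ with $\gamma\in\mathcal L^c_*(\nu)$ rather than $\beta\in\mathcal L^c(m)$, so that the $z$-bookkeeping for the action inequality is built in (Lemma~\ref{lm:action_n}); your reliance on Proposition~\ref{prop:shift} for the gluing is workable but less direct.
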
 Here $\widehat{m}$ is defined by (\ref{intro:lifting_m}).

Theorems \ref{th:DPP}, \ref{th:viability} immediately implies the following.
\begin{corollary}
	Let the upper semicontinuous  multifunction $\mathcal{V}:[0,T]\times\ptd\rightrightarrows C(\td)$ have nonempty values. Assume that, for any $t\in [0,T]$, $m\in\ptd$, $\phi\in\mathcal{V}(t,m)$,
	\begin{itemize}
		\item $\mathcal{V}(t,m)\subset \mathrm{BL}_{M,C}(\td)$
		where the constants $M$ and $C$ do not dependent on $t$ and $m$;
		\item $\mathcal{V}(T,m)=\{\sigma(\cdot,m)\}$;
		\item $\mathcal{D}^c_F(t,m,\phi)\neq\varnothing, $ where the constant $c$ does not depend on $t$, $m$ and $\phi$.
	\end{itemize} 
	Then $\mathcal{V}$ is a value multifunction of mean field game (\ref{sys:dynamics}), (\ref{sys:payoff}). 
\end{corollary}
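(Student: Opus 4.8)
The plan is to read the corollary as a straightforward composition of the two main results already established, Theorem~\ref{th:viability} and Theorem~\ref{th:DPP}, and to check that the three bulleted hypotheses are precisely what is needed to feed the output of the first theorem into the input of the second. The logical skeleton is: (infinitesimal condition) $\Rightarrow$ (viability with respect to the mean field game dynamics) $\Rightarrow$ (value multifunction). The first arrow is the sufficiency (``if'') direction of Theorem~\ref{th:viability}, and the second arrow is Theorem~\ref{th:DPP}. So the proof is essentially a verification that the quantifiers and uniformity of constants match, followed by two invocations.

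First I would apply Theorem~\ref{th:viability}. Its standing assumptions are that $\mathcal{V}$ is upper semicontinuous with nonempty values and that there exist constants $M,C$ with $\mathcal{V}(t,m)\subset\mathrm{BL}_{M,C}(\td)$ for all $t\in[0,T]$, $m\in\ptd$; the first and second bullets of the corollary supply exactly these, with $M$ and $C$ independent of $t$ and $m$ as required. The equivalence in Theorem~\ref{th:viability} is stated for a \emph{single} constant $c>0$ that works simultaneously at every point of the graph, so the crucial thing to confirm is that the third bullet provides $\mathcal{D}^c_F\mathcal{V}(t,m,\phi)\neq\varnothing$ for one and the same $c$ for all $t$, $m$ and $\phi\in\mathcal{V}(t,m)$; this is precisely the phrasing ``the constant $c$ does not depend on $t$, $m$ and $\phi$.'' With these hypotheses in place, the sufficiency direction of Theorem~\ref{th:viability} yields that $\mathcal{V}$ is viable with respect to the mean field game dynamics.

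Next I would invoke Theorem~\ref{th:DPP}. That theorem requires $\mathcal{V}$ to be upper semicontinuous (already assumed), viable with respect to the mean field game dynamics (just established), and to satisfy the terminal condition $\mathcal{V}(T,m)=\{\sigma(\cdot,m)\}$, which is exactly the second hypothesis of the corollary at $t=T$. Its conclusion is that $\mathcal{V}$ is a value multifunction of mean field game~(\ref{sys:dynamics}),~(\ref{sys:payoff}), which is precisely the assertion to be proved.

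I do not expect any genuine obstacle here, since the corollary is by design an immediate consequence of the two theorems; the only point demanding care is bookkeeping of the uniform constants. Specifically, one must make sure that the same $M$, $C$, and $c$ that appear in the corollary's hypotheses are the constants entering Theorem~\ref{th:viability}, so that its single-$c$ formulation applies verbatim rather than requiring a constant that varies along the graph. Once that matching is confirmed, the chaining of Theorem~\ref{th:viability} and Theorem~\ref{th:DPP} completes the argument.
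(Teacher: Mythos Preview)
Your proposal is correct and matches the paper's own treatment: the corollary is stated in the paper as an immediate consequence of Theorems~\ref{th:DPP} and~\ref{th:viability}, and you reproduce exactly that two-step chaining (infinitesimal condition $\Rightarrow$ viability via the sufficiency direction of Theorem~\ref{th:viability}, then viability plus the terminal condition $\Rightarrow$ value multifunction via Theorem~\ref{th:DPP}). The only minor slip is that it is the first bullet alone (together with the opening hypotheses of upper semicontinuity and nonempty values) that feeds Theorem~\ref{th:viability}; the second bullet is used only in the invocation of Theorem~\ref{th:DPP}.
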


\section{Properties of the ``frozen'' dynamics}\label{sect:auxiliary}
In this Section we present  auxiliary statements those are used in the proof of Theorem \ref{th:viability}.

For each $\tau\geq 0$, let $\Xi^\tau$ be an operator from $\tdr\times\rdp$ to $\tdr$ defined by the rule
\begin{equation}\label{intro:Xi}
\Xi^{\tau}(w,v)\triangleq w+\tau v. 
\end{equation}

\begin{lemma}\label{lm:Theta_pi}
	If $\nu,\nu'\in\ptd$,  $\gamma\in \mathcal{L}_*(\nu)$, $\tau,\tau'\geq 0$, $\pi^0\in\Pi(\nu',\nu)$ is an optimal plan between $\nu'$ and $\nu$,
	then
	\begin{enumerate}
		\item $$W_1(\Xi^{\tau}{}_\#\gamma,\Xi^{\tau'}{}_\#\gamma)\leq |\tau-\tau'|\int_{\tdr\times\rdp} \|v\|\gamma(d(w,v));$$
		\item
		$$W_1(\Xi^\tau{}_\#\gamma,\Xi^\tau{}_\#(\pi^0*\gamma))\leq W_1(\nu,\nu'). $$
	\end{enumerate}
\end{lemma}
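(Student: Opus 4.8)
Both parts are Kantorovich--Rubinstein estimates obtained by exhibiting a suitable transport plan and bounding its cost; the key is to track how the maps $\Xi^\tau$ act on couplings. For part~(1), the plan is to use the ``same-fibre'' coupling: starting from $\gamma\in\mathcal{L}_*(\nu)$, consider the map $(w,v)\mapsto(\Xi^\tau(w,v),\Xi^{\tau'}(w,v)) = (w+\tau v,\,w+\tau' v)$ and push $\gamma$ forward by it to obtain an element of $\Pi(\Xi^\tau{}_\#\gamma,\Xi^{\tau'}{}_\#\gamma)$. Its cost is $\int_{\tdr\times\rdp}\|(w+\tau v)-(w+\tau' v)\|\,\gamma(d(w,v)) = |\tau-\tau'|\int\|v\|\,\gamma(d(w,v))$, which is exactly the claimed bound; here one uses that the norm on $\tdr$ restricted to differences of points lying in the same residue class is the Euclidean norm, and that $v\in\rdp$ so $\|\tau v-\tau' v\|=|\tau-\tau'|\|v\|$. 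One should note $\int\|v\|\,\gamma(d(w,v))<\infty$ because $\gamma\in\mathcal{P}^1$.

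For part~(2), the plan is to build a coupling between $\Xi^\tau{}_\#\gamma$ and $\Xi^\tau{}_\#(\pi^0*\gamma)$ out of the optimal plan $\pi^0\in\Pi(\nu',\nu)$ together with the disintegration of $\gamma$ along its $\tdr$-marginal $\nu$. Write $\gamma(d(w,v)) = \gamma(dv\mid w)\nu(dw)$. The composition $\pi^0*\gamma$ is, by definition \eqref{intro:composition}, the measure on $\tdr\times\rdp$ given by $\int_{\tdr}\int_{\tdr}\int_{\rdp}\varphi(w',v)\gamma(dv\mid w)\pi^0(dw'\mid w)\nu(dw)$, i.e. it carries the same conditional velocity law $\gamma(\cdot\mid w)$ but now attached to the point $w'$ drawn from $\pi^0(\cdot\mid w)$. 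Define a measure $\Lambda$ on $(\tdr)\times(\tdr)$ by pushing $\pi^0(dw'\mid w)\,\gamma(dv\mid w)\,\nu(dw)$ forward under $(w,w',v)\mapsto(w+\tau v,\,w'+\tau v)$. Its first marginal is $\Xi^\tau{}_\#\gamma$ and its second marginal is $\Xi^\tau{}_\#(\pi^0*\gamma)$, so $\Lambda\in\Pi(\Xi^\tau{}_\#\gamma,\Xi^\tau{}_\#(\pi^0*\gamma))$. The cost is
\[
\int\|(w+\tau v)-(w'+\tau v)\|\,\pi^0(dw'\mid w)\gamma(dv\mid w)\nu(dw)
=\int_{\tdr\times\tdr}\|w-w'\|\,\pi^0(d(w,w')) = W_1(\nu',\nu),
\]
the last equality because $\pi^0$ is optimal. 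Hence $W_1(\Xi^\tau{}_\#\gamma,\Xi^\tau{}_\#(\pi^0*\gamma))\le W_1(\nu,\nu')$.

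The only genuinely delicate point is measurability of the disintegrations $\gamma(\cdot\mid w)$ and $\pi^0(\cdot\mid w)$ and the legitimacy of the Fubini-type manipulations used to identify the marginals of $\Lambda$ and $\pi^0*\gamma$; this is standard for Borel probabilities on Polish spaces (see \eqref{intro:disintegration}, \eqref{intro:composition} and \cite{Ambrosio}), so I would invoke it rather than prove it. A minor care point is the statement's hypothesis ``$\nu,\nu'\in\ptd$''; in context these should be elements of $\mathcal{P}^1(\tdr)$ (so that $\mathcal{L}_*$ and $\Xi^\tau$ make sense), and I would read it that way. No contraction/Gronwall machinery is needed—both inequalities are one-line transport-cost computations once the right plans are written down.
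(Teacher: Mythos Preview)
Your proof is correct. Both parts proceed by exhibiting an explicit transport plan (the ``same-fibre'' coupling for part~(1), and the coupling built from $\pi^0$ and the disintegration $\gamma(\cdot\mid w)$ for part~(2)) and bounding its cost directly; the marginal checks you outline go through, and the torus part causes no trouble since the metric on $\tdr$ is translation-invariant.

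The paper's proof reaches the same conclusions by the dual route: instead of constructing couplings, it writes each $W_1$ as a supremum over $\varphi\in\mathrm{Lip}_1(\tdr)$ via the Kantorovich--Rubinstein formula~\eqref{intro:wass_distance} and bounds $|\varphi(w+\tau v)-\varphi(w+\tau' v)|\le|\tau-\tau'|\|v\|$ (respectively $|\varphi(w'+\tau v)-\varphi(w+\tau v)|\le\|w'-w\|$) pointwise under the integral. This avoids having to verify the marginals of an auxiliary measure $\Lambda$, so it is a line or two shorter; your primal argument, on the other hand, makes the underlying coupling explicit, which some readers may find clearer. The two are equivalent and equally elementary. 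Your remark that the hypothesis should read $\nu,\nu'\in\mathcal{P}^1(\tdr)$ rather than $\ptd$ is also correct.
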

\begin{proof}
	First, we have that
	\begin{equation*}
	\begin{split}
	W_1(\Xi^{\tau}&{}_\#\gamma,\Xi^{\tau'}{}_\#\gamma)\\&= \sup_{\varphi\in\mathrm{Lip}_1(\tdr)}\left\{\int_{\tdr\times\rdp}[\varphi(w+\tau v)-\varphi(w+\tau' v)]\gamma(d(w,v))\right\} \\
	&\leq |\tau-\tau'|\int_{\tdr\times\rdp} \|v\|\gamma(d(w,v)).
	\end{split}
	\end{equation*} This proves the first statement of the Lemma.
	
	Further, we have that
	\begin{equation*}
	\begin{split}
	W_1(&\Xi^\tau{}_\#\gamma,\Xi^\tau{}_\#(\pi^0*\gamma))\\
	&=\sup_{\varphi\in\mathrm{Lip}_1(\tdr)} \left\{\int_{(\tdr)^2}\int_{\rdp}
	[\varphi(w'+\tau v)-\varphi(w+\tau v)]\gamma(dv|w)\pi^0(d(w',w))\right\} \\
	&\leq \int_{(\tdr)^2}\|w'-w\|\pi^0(d(w',w)).
	\end{split}
	\end{equation*} This inequality yields the second statement of the Lemma.
\end{proof}

The following statements are concerned with the continuity of the operator $A$.

\begin{lemma}\label{lm:A_lipshitz}
	If $m\in\ptd$, $s,r\in [0,T]$, $s<r$, $K>0$, $\psi\in \mathrm{Lip}_{K}(\td)$, then $A^{s,r}_m\psi\in\mathrm{Lip}_{[(K +1)e^{L(r-s)}-1]}(\td)$. 
\end{lemma}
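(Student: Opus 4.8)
The plan is to reproduce the argument used for the first statement of Lemma~\ref{lm:lipschitz}, with the solution of differential inclusion~(\ref{incl:F}) replaced by the single Euler step that defines $A^{s,r}_m$. Fix $x_1,x_2\in\td$. Since $U$ is compact and $f,g$ are continuous, the set $\{(f(s,x_1,m,u),g(s,x_1,m,u)):u\in U\}$ is compact, hence so is its convex hull $F(s,x_1,m)\subset\rdp$; as $(a,b)\mapsto\psi(x_1+(r-s)a)+(r-s)b$ is continuous, the supremum defining $(A^{s,r}_m\psi)(x_1)$ is attained at some $(a_1,b_1)\in F(s,x_1,m)$. By Carathéodory's theorem there are $\lambda_1,\dots,\lambda_k\ge0$ with $\sum_i\lambda_i=1$ and $u_1,\dots,u_k\in U$ such that $(a_1,b_1)=\sum_i\lambda_i(f(s,x_1,m,u_i),g(s,x_1,m,u_i))$.

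Next I would transport $(a_1,b_1)$ to a point of $F(s,x_2,m)$ through the same controls: set $(a_2,b_2)\triangleq\sum_{i=1}^k\lambda_i(f(s,x_2,m,u_i),g(s,x_2,m,u_i))\in F(s,x_2,m)$. The Lipschitz assumption (M\ref{cond:lipschitz}) gives $\|f(s,x_1,m,u_i)-f(s,x_2,m,u_i)\|\le L\|x_1-x_2\|$ and $|g(s,x_1,m,u_i)-g(s,x_2,m,u_i)|\le L\|x_1-x_2\|$ for each $i$, whence, by convexity, $\|a_1-a_2\|\le L\|x_1-x_2\|$ and $|b_1-b_2|\le L\|x_1-x_2\|$. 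Since $(a_2,b_2)\in F(s,x_2,m)$, one has $(A^{s,r}_m\psi)(x_2)\ge\psi(x_2+(r-s)a_2)+(r-s)b_2$, and therefore
\begin{equation*}
\begin{split}
(A^{s,r}_m\psi)(x_1)-(A^{s,r}_m\psi)(x_2)
&\le\bigl[\psi(x_1+(r-s)a_1)-\psi(x_2+(r-s)a_2)\bigr]+(r-s)|b_1-b_2|\\
&\le K\|x_1-x_2\|+K(r-s)\|a_1-a_2\|+(r-s)|b_1-b_2|\\
&\le\bigl[K+KL(r-s)+L(r-s)\bigr]\|x_1-x_2\|
=\bigl[(K+1)(1+L(r-s))-1\bigr]\|x_1-x_2\|.
\end{split}
\end{equation*}
Using $1+L(r-s)\le e^{L(r-s)}$ yields $(A^{s,r}_m\psi)(x_1)-(A^{s,r}_m\psi)(x_2)\le[(K+1)e^{L(r-s)}-1]\|x_1-x_2\|$; exchanging $x_1$ and $x_2$ gives the reverse bound, which is the claim.

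The computation is routine. The only points deserving a word of justification are the compactness of $F(s,x_1,m)$ (so the defining supremum is attained — alternatively one works with an $\varepsilon$-maximizer and lets $\varepsilon\to0$), and the transfer of an element of $F(s,x_1,m)$ to $F(s,x_2,m)$ through a common finite family of controls, which is exactly where (M\ref{cond:lipschitz}) is used. I do not expect any genuine obstacle.
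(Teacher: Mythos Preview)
Your argument is correct and is precisely the adaptation the paper has in mind: it merely says ``proved in the same way as the first statement of Lemma~\ref{lm:lipschitz},'' i.e.\ pick a maximizing control at $x_1$, transport it to $x_2$, and estimate. Your use of Carath\'eodory's theorem to extract a finite convex combination is a harmless variant of the measure-valued control representation~(\ref{equal:F_probabilities}) that the paper uses elsewhere; either device gives $\|a_1-a_2\|,|b_1-b_2|\le L\|x_1-x_2\|$, and you even obtain the sharper intermediate constant $(K+1)(1+L(r-s))-1$ before invoking $1+t\le e^t$.
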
 The lemma is proved in the same way as the first statement of Lemma \ref{lm:lipschitz}.

\begin{lemma}\label{lm:A_continuity}
	If $m,m'\in\ptd$, $K>0$, $\psi,\psi'\in \mathrm{Lip}_K(\td)$, $s,s',r\in [0,T]$, $s<r$, $s'<r$, then
	\begin{equation*}\begin{split}\|A^{s,r}_{m}\psi&-A^{s',r}_{m'}\psi'\|\\ &\leq \|\psi-\psi'\|+ (K+1)[L(r-\hat{s})W_1(m,m')+R|s-s'|+\alpha(s-s')\cdot(r-\hat{s})],\end{split}\end{equation*}
	where $\hat{s}\triangleq\inf\{s,s'\}$.
\end{lemma}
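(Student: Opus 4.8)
The plan is to estimate $(A^{s,r}_m\psi)(x) - (A^{s',r}_{m'}\psi')(x)$ pointwise in $x$ and then take the supremum, exploiting the max-plus linearity of $A$ and the fact that the supremum in the definition (\ref{intro:A_Bellman}) is attained (the set $F(s,x,m)$ is compact by (M1)--(M2), $\mathbb R$-valued continuous data, and convexity). Fix $x\in\td$. First I would pick $(a,b)\in F(s,x,m)$ realizing the supremum for $(A^{s,r}_m\psi)(x)$, so that
$$(A^{s,r}_m\psi)(x)=\psi(x+(r-s)a)+(r-s)b,$$
and write $(a,b)$ as a convex combination $\sum_k\lambda_k(f(s,x,m,u_k),g(s,x,m,u_k))$ via (\ref{intro:F}) (Carathéodory). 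Then I would build a competitor for $(A^{s',r}_{m'}\psi')(x)$ by taking the analogous convex combination of $(f(s',x,m',u_k),g(s',x,m',u_k))\in F(s',x,m')$, call it $(a',b')$, which is admissible for the second operator, giving the lower bound $(A^{s',r}_{m'}\psi')(x)\ge \psi'(x+(r-s')a')+(r-s')b'$.

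The estimate then splits into three contributions, all controlled by (M3)--(M4) and the Lipschitz bound on $\psi,\psi'$. The change of the vector field at the frozen point: $\|(a,b)-(a',b')\|$ is bounded, uniformly in the convex combination, by $\|f(s,x,m,u)-f(s',x,m',u)\|+|g(s,x,m,u)-g(s',x,m',u)|$ which via the triangle inequality, (M\ref{cond:lipschitz}) (for the $m\to m'$ move) and (M\ref{cond:alpha_cont}) (for the $s\to s'$ move) is at most $L W_1(m,m')+\alpha(s-s')$. The change of horizon factor $r-s$ versus $r-s'$ contributes $|s-s'|$ multiplied by the bound $R$ on $\|f\|,|g|$ from (\ref{intro:R}). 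Feeding these into the displacement of the argument of $\psi$ and the additive $b$-term, and using $\psi\in\mathrm{Lip}_K$ together with $\|\psi-\psi'\|$, one gets
$$(A^{s,r}_m\psi)(x)-(A^{s',r}_{m'}\psi')(x)\le \|\psi-\psi'\|+(K+1)\big[L(r-\hat s)W_1(m,m')+R|s-s'|+\alpha(s-s')\cdot(r-\hat s)\big],$$
where the factor $r-\hat s=\max\{r-s,r-s'\}$ appears because the displacement $(r-s)a$ (or $(r-s')a'$) carries the horizon length and $K$ multiplies the $\psi$-displacement while the $1$ comes from the $b$-term; bounding both $r-s$ and $r-s'$ by $r-\hat s$ gives the clean form. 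The symmetric argument, swapping the roles of $(s,m,\psi)$ and $(s',m',\psi')$, yields the reverse inequality, and taking the sup over $x$ completes the proof.

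The main obstacle I anticipate is bookkeeping the horizon-length factor correctly: the displacement term is $\psi(x+(r-s)a)$ versus $\psi'(x+(r-s')a')$, so the difference of arguments is $(r-s)a-(r-s')a' = (r-s)(a-a') + (s'-s)a'$, and one must check that $(r-s)\|a-a'\|$ and $|s'-s|\|a'\|$ are each absorbed into the stated bound with the right constant $(K+1)$ and the right use of $r-\hat s$ rather than $r-s$; the parallel handling of the $(r-s)b-(r-s')b'$ term is where the ``$+1$'' in $(K+1)$ is spent. The rest is a routine application of the standing assumptions, entirely analogous in spirit to the proof of Lemma \ref{lm:B_continuity}.
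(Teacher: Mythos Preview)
Your proposal is correct and follows essentially the same route as the paper: fix a maximizer for one side, transport it to a competitor for the other side using the same ``control data'', and estimate the resulting displacement via (M\ref{cond:alpha_cont}), (M\ref{cond:lipschitz}) and the bound $R$. The only cosmetic difference is that you represent the optimizing pair $(a,b)\in F(s,x,m)$ as a finite Carath\'eodory convex combination $\sum_k\lambda_k(f(s,x,m,u_k),g(s,x,m,u_k))$ and reuse the weights $(\lambda_k,u_k)$, whereas the paper invokes the representation $F(t,x,m)=\bigl\{\int_U(f,g)\,\xi(du):\xi\in\mathcal P(U)\bigr\}$ and reuses the same $\xi$ when passing from $(s,m)$ to $(s',m')$; these are equivalent bookkeeping devices.
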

\begin{proof}
	First, recall that
	\begin{equation}\label{equal:F_probabilities}
	F(t,x,m)=\left\{ \int_U (f(t,x,m,u),g(t,x,m,u))\xi(du):\xi\in\mathcal{P}(U) \right\}.
	\end{equation} 
	Without loss of generality we can assume that $s\leq s'$. Thus, we have that
	\begin{equation*}\begin{split}
	(A^{s,r}_{m}\psi)(x)\hspace{17pt}&{}\\=\sup_{\xi\in\mathcal{P}(U)}&\int_U (\psi(x+(r-s) f(s,x,m,u))+(r-s)g(s,x,m,u))\xi(du)\\ \leq
	\sup_{\xi\in\mathcal{P}(U)}&\int_U (\psi(x+(r-s') f(s',x,m',u))+(r-s')g(s',x,m',u))\xi(du))\\+(K+&1)[L(r-s)W_1(m,m')+R|s-s'|+\alpha(s-s')\cdot(r-s)]\\ \leq \sup_{\xi\in\mathcal{P}(U)}&\int_U (\psi'(x+(r-s') f(s',x,m',u))+(r-s')g(s',x,m',u))\xi(du)\\ +\|\psi-&\psi'\| +(K+1)[L(r-s)W_1(m,m')+R|s-s'|+\alpha(s-s')\cdot(r-s)]\\ = (A^{s',r}_{m'}\psi')&(x)+\|\psi-\psi'\|\\+(K+&1)[L(r-s)W_1(m,m')+R|s-s'|+\alpha(s-s')\cdot(r-s)].
	\end{split}
	\end{equation*} This proves the Lemma.
\end{proof}

The following lemma provides the comparison of the original and the ``frozen'' dynamics.

\begin{lemma}\label{lm:bellman_close}
	Let $m(\cdot)$ be a flow of probabilities on $[s,r]$, $m_*\in\ptd$, and let $\psi,\psi'\in \mathrm{Lip}_{K}(\mathbb{T}^d)$. Assume that, for all $t\in [s,r]$,  $W_1(m(t),m_*)\leq \delta_1$, then
	$$\|A^{s,r}_{m_*}\psi-B_{m(\cdot)}^{s,r}\psi'\|\leq (K+1)(\alpha(r-s)+LR(r-s)+L\delta_1)(r-s)+\|\psi-\psi'\|. $$
\end{lemma}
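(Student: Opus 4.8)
The statement compares the ``frozen'' one-step operator $A^{s,r}_{m_*}$ with the true Bellman propagator $B^{s,r}_{m(\cdot)}$ over the interval $[s,r]$, so the natural approach is to bound the difference pointwise in $y\in\td$ by running a trajectory of one dynamics and comparing it with a trajectory of the other that uses the same relaxed control. First I would split the estimate into two pieces using the triangle inequality: $\|A^{s,r}_{m_*}\psi-B^{s,r}_{m(\cdot)}\psi'\|\le \|A^{s,r}_{m_*}\psi-B^{s,r}_{m(\cdot)}\psi\|+\|B^{s,r}_{m(\cdot)}\psi-B^{s,r}_{m(\cdot)}\psi'\|$. The second term is at most $\|\psi-\psi'\|$ because $B^{s,r}_{m(\cdot)}$ shifts every value by the same running cost and is monotone (linear in max-plus), so it is $1$-Lipschitz in the sup-norm of the terminal data; this disposes of the $\|\psi-\psi'\|$ contribution, and it remains to estimate $\|A^{s,r}_{m_*}\psi-B^{s,r}_{m(\cdot)}\psi\|$.

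For the main term, fix $y\in\td$. To bound $(A^{s,r}_{m_*}\psi)(y)-(B^{s,r}_{m(\cdot)}\psi)(y)$ from above, pick $(a,b)\in F(s,y,m_*)$ attaining (or nearly attaining) the supremum in the definition of $A^{s,r}_{m_*}$; by \eqref{equal:F_probabilities} it comes from some $\xi\in\mathcal P(U)$, i.e. $(a,b)=\int_U(f(s,y,m_*,u),g(s,y,m_*,u))\xi(du)$. Use the constant relaxed control $t\mapsto\xi$ on $[s,r]$ to drive the true dynamics \eqref{eq:xi_F} from $y$, producing $(x(\cdot),z(\cdot))\in\mathrm{Sol}(r,s,y,m(\cdot))$, so that $(B^{s,r}_{m(\cdot)}\psi)(y)\ge \psi(x(r))+z(r)-z(s)$. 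Then compare the frozen straight-line point $y+(r-s)a$ with $x(r)$ and the frozen increment $(r-s)b$ with $z(r)-z(s)$: the difference of the vector fields $f(s,y,m_*,u)-f(t,x(t),m(t),u)$ is controlled by $\alpha(r-s)$ (time modulus, condition \eqref{cond:alpha_cont}) plus $L\|x(t)-y\|$ plus $LW_1(m(t),m_*)\le L\delta_1$ (Lipschitz condition \eqref{cond:lipschitz}), and $\|x(t)-y\|\le R(t-s)\le R(r-s)$ by the velocity bound \eqref{estima:dot_x_dot_z}. Integrating over $[s,r]$ gives $\|x(r)-(y+(r-s)a)\|\le(\alpha(r-s)+LR(r-s)+L\delta_1)(r-s)$ and the same bound for $|z(r)-z(s)-(r-s)b|$; since $\psi\in\mathrm{Lip}_K$, this yields $(A^{s,r}_{m_*}\psi)(y)\le (B^{s,r}_{m(\cdot)}\psi)(y)+(K+1)(\alpha(r-s)+LR(r-s)+L\delta_1)(r-s)$. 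The reverse inequality is symmetric: start from the optimal (relaxed) trajectory for $B^{s,r}_{m(\cdot)}$, freeze its control at the initial time — more precisely take $(a,b)=\int_U(f(s,y,m_*,u),g(s,y,m_*,u))\xi(du)$ for that same $\xi$ — and run the frozen straight-line dynamics, estimating the discrepancy by the identical computation.

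Combining the two one-sided bounds with the preliminary reduction gives the claimed inequality. I expect the only genuinely delicate point to be bookkeeping in the reverse direction: one must be careful that the optimal control for $B^{s,r}_{m(\cdot)}$ may be a genuine measure-valued control $\xi(t,\cdot)$ varying in $t$, whereas the frozen operator $A^{s,r}_{m_*}$ only sees a single measure on $U$; the clean way around this is to note that along the true optimal trajectory the state moves at most $R(r-s)$ and the measures stay within $\delta_1$ of $m_*$, so replacing $f(t,x(t),m(t),u)$ by $f(s,y,m_*,u)$ and integrating the resulting error (again bounded by $(\alpha(r-s)+LR(r-s)+L\delta_1)(r-s)$ after applying Gronwall-type reasoning or a direct integral estimate) shows the value the frozen problem assigns to the frozen analogue of the $t$-slice of $\xi$ at $t=s$ already nearly matches $(B^{s,r}_{m(\cdot)}\psi)(y)$. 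Everything else is the same Gronwall/velocity-bound machinery already used in the proofs of Lemmas \ref{lm:lipschitz} and \ref{lm:B_continuity}.
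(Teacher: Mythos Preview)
Your proposal is correct and essentially mirrors the paper's proof: the same comparison of a straight-line ``frozen'' motion with the true trajectory driven by the same relaxed control, with the discrepancy bounded by $\alpha(r-s)+LR(r-s)+L\delta_1$ pointwise in time and then integrated. The only cosmetic difference is in how the $\|\psi-\psi'\|$ term is peeled off: the paper uses the $1$-Lipschitz property of $A^{s,r}_{m_*}$ (so it compares $A\psi'$ with $B\psi'$), whereas you use the $1$-Lipschitz property of $B^{s,r}_{m(\cdot)}$ (so you compare $A\psi$ with $B\psi$); both are immediate. For the reverse inequality your wording is slightly loose---the optimal control for $B$ is a time-dependent $\xi(t,\cdot)$, and the clean choice of frozen $(a,b)\in F(s,y,m_*)$ is the time average $(a,b)=\frac{1}{r-s}\int_s^r\int_U\bigl(f(s,y,m_*,u),g(s,y,m_*,u)\bigr)\xi(t,du)\,dt$, which lies in $F(s,y,m_*)$ by convexity---but the estimate you describe then goes through verbatim, and the paper itself simply writes ``the opposite inequality is proved in the same way.''
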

\begin{proof}
	First, by Lemma \ref{lm:A_continuity} we conclude that
	\begin{equation}\label{lmbc_proof:A_estima}
	\|A_{m_*}^{s,r}\psi-A_{m_*}^{s,r}\psi'\|\leq\|\psi-\psi'\|.
	\end{equation}
	
	Let $y\in\mathbb{T}^d$ and let $(a^*,b^*)\in F(s,y,m_*)$ be such that
	$$(A^{s,r}_{m_*}\psi')(y)=\psi'(y+(r-s)a^*)+(r-s)b^*. $$
	Using (\ref{equal:F_probabilities}), we obtain that there exists a probability $\xi\in\mathcal{P}(U)$ such that
	$$ (a^*,b^*)=\int_{U}(f(t,x,m_*,u),g(t,x,m_*,u))\xi(du).$$
	Consider the motion $(x(\cdot),z(\cdot))$ satisfying
	$$\dot{x}(t)=\int_U f(t,x(t),m(t),u)\xi(du),\ \ \dot{z}(t)=\int_U g(t,x(t),m(t),u)\xi(du),$$ and $x(s)=y,$ $z(s)=0$.
	
	We have that $\|x(t)-y\|\leq R(t-s)$. Thus, for every $t\in [s,r]$,
	$$\|\dot{x}(t)-a^*\|\leq \alpha(t-s)+LR(t-s)+L\delta_1,$$
	$$ |\dot{z}(t)-b^*|\leq \alpha(t-s)+LR(t-s)+L\delta_1. $$
	Therefore,
	$$\|x(r)-y-(r-s)a^*\|\leq (\alpha(r-s)+LR(r-s)+L\delta_1)(r-s), $$
	$$|z(r)-(r-s)b^*\|\leq (\alpha(r-s)+LR(r-s)+L\delta_1)(r-s). $$
	
	This yields the inequality
	\begin{equation}\label{ineq:A_B:proof_lm_A_B}
	\begin{split}
	A^{s,r}_{m_*}\psi'&=\psi'(y+(r-s)a^*)+(r-s)b^* \\
	&\leq \psi'(x(r))+z(r)+(K+1)(\alpha(r-s)+LR(r-s)+L\delta_1)(r-s)\\
	&\leq B^{s,r}_{m(\cdot)}\psi'+(K+1)(\alpha(r-s)+LR(r-s)+L\delta_1)(r-s).
	\end{split}
	\end{equation}
	The opposite inequality
	$$B^{s,r}_{m(\cdot)}\psi'\leq A^{s,r}_{m_*}\psi'+(K+1)(\alpha(r-s)+LR(r-s)+L\delta_1)(r-s) $$ is proved in the same way. This, (\ref{lmbc_proof:A_estima}) and (\ref{ineq:A_B:proof_lm_A_B}) imply the conclusion of the Lemma.
\end{proof}

\section{Proof of the viability theorem. Sufficiency}\label{sect:proof_sufficiency}
In this section we assume that the upper semicontinuous multifunction $\mathcal{V}:[0,T]\times \ptd\rightrightarrows C(\td)$ has nonempty values and, for any $t\in [0,T]$, $m\in \ptd$, $\phi\in\mathcal{V}(t,m)$, the following properties hold true $$\phi \in\mathrm{BL}_{M,C}(\td), \ \ \mathcal{D}^c_F\mathcal{V}(t,m,\phi)\neq \varnothing.$$ Here $c$, $M$ and $C$ are constants that do not dependent on $s$, $m$ and $\phi$. 

We shall prove that under this assumption $\mathcal{V}$ is viable with respect to the mean field game dynamics.
The proof is a modification of the proof of the classic viability theorem presented in~\cite{Aubin}.

Denote by $\mathcal{Z}$ the set of triples $(t,\nu,\phi)\in [0,T]\times\mathcal{P}^1(\tdr)\times \mathrm{BL}_{M,C}(\td)$ such that $\mathrm{supp}(\nu)\subset \td\times [-ct,ct]$ and $\phi\in\mathcal{V}(t,\mathrm{p}_\#\nu)$. Further, for $n\in\mathbb{N}$, set $\mathcal{Z}_n\triangleq \mathcal{Z}\cap ([0,T-1/n]\times\mathcal{P}^1(\tdr)\times \mathrm{BL}_{M,C}(\td))$. Clearly, the sets $\mathcal{Z}$ and $\mathcal{Z}_n$ are compacts.

\begin{lemma}\label{lm:step}  
	There exists a number $\theta_n\in (0,1/n)$ such that, for any $(t,\nu,\phi)\in\mathcal{Z}_n$, one can find $(t^+,\nu^+,\phi^+)\in \mathcal{Z}$ and $\gamma\in \mathcal{L}_*^c(\nu)$ such that the following properties hold true for $m=\mathrm{p}_\#\nu$:
	\begin{enumerate}
		\item $W_1(\Xi^{t^+-t}{}_\#\gamma,\nu^+)<(t^+-t)/n$;
		\item $\|A_{m}^{t,t^+}\phi^+-\phi\|<(t^+-t)/n$;
		\item $[\phi^+,\nu^+]> [\phi,\nu]-(t^+-t)/n$;
		\item $$\int_{\tdr\times\rdp}\mathrm{dist}\left(v,\widehat{F}(t,w,\nu)\right)\gamma(d(z,v))< 1/n. $$
	\end{enumerate}
	Here $\widehat{F}$ defined by (\ref{intro:hat_F}).
\end{lemma}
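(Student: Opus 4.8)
The plan is to combine a compactness argument on $\mathcal{Z}_n$ with the pointwise nonemptiness of the set-valued derivative $\mathcal{D}^c_F\mathcal{V}$. The crucial observation is that the conclusion of the lemma is an \emph{approximate} one-step viability statement, and it should follow from Definition~\ref{def:tangent} applied at each point $(t,m,\phi)$ — with the understanding that $\beta\in\mathcal{D}^c_F\mathcal{V}(t,m,\phi)$ supplies, for each prescribed accuracy, a time increment $\tau$, a perturbed tangent measure $\beta_\tau\in\mathcal{L}^c(m)$ and a function $\phi_\tau\in\mathcal{V}(t+\tau,m_\tau)$ satisfying conditions 3 and 4 of that definition up to $o(\tau)$. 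The only extra ingredient needed is that this accuracy can be made \emph{uniform} over the compact set $\mathcal{Z}_n$, which is precisely what gives the single $\theta_n$ working for all triples.

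First I would fix $n$ and argue by contradiction: suppose no admissible $\theta_n$ exists. Then there is a sequence of "bad" triples $(t_k,\nu_k,\phi_k)\in\mathcal{Z}_n$ for which the four inequalities (1)--(4) fail for every choice of step size $t^+-t\le 1/k$, of target $(t^+,\nu^+,\phi^+)\in\mathcal{Z}$ and of $\gamma\in\mathcal{L}_*^c(\nu_k)$. By compactness of $\mathcal{Z}_n$ (noted in the text just before the lemma) pass to a subsequence so that $t_k\to t_*$, $\nu_k\to\nu_*$ in $W_1$, $\phi_k\to\phi_*$ uniformly, with $(t_*,\nu_*,\phi_*)\in\mathcal{Z}_n$ and $m_*\triangleq\mathrm{p}_\#\nu_*$, $\phi_*\in\mathcal{V}(t_*,m_*)$. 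By hypothesis $\mathcal{D}^c_F\mathcal{V}(t_*,m_*,\phi_*)\ne\varnothing$; pick $\beta_*$ in it, with the associated sequences $\{\tau_j\}$, $\{\beta_j\}\subset\mathcal{L}^c(m_*)$, $\{\phi_j\}$ as in Definition~\ref{def:tangent}. For suitably small $\tau_j$ this produces, at the limit point, the data we want; the task is to transport it back to $(t_k,\nu_k,\phi_k)$ for large $k$.

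The transport step is the technical heart. Given $\beta_*\in\mathcal{L}^c(m_*)$ one builds $\gamma_k\in\mathcal{L}^c_*(\nu_k)$ by first lifting: since $\mathrm{p}_\#\nu_k$ is close to $m_*$ in $W_1$, one composes an optimal plan between these marginals with a disintegration of $\beta_*$ along $m_*$ (this is exactly the $*$-composition from~\eqref{intro:composition}, and Lemma~\ref{lm:Theta_pi} controls the resulting $W_1$-error after pushing forward by $\Xi^\tau$). Then one sets $t^+=t_k+\tau$ for a $\tau$ coming from the derivative sequence, $\nu^+=\Xi^{\tau}{}_\#\gamma_k$ (adjusted slightly so $\mathrm{supp}\,\nu^+\subset\td\times[-ct^+,ct^+]$, using $c$-boundedness of the velocities in $\gamma_k$ and $t^+=t_k+\tau$), $m^+=\mathrm{p}_\#\nu^+$, and $\phi^+=\phi_j\in\mathcal{V}(t_*+\tau_j,\cdot)$ — but here one must use upper semicontinuity of $\mathcal{V}$ together with closeness of $(t_k+\tau,m^+)$ to $(t_*+\tau_j,m_j)$ to get a genuine element $\phi^+\in\mathcal{V}(t^+,m^+)$; alternatively one invokes $\mathcal{D}^c_F\mathcal{V}$ \emph{at the shifted points} to produce $\phi^+$ directly. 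Estimating (2) uses Lemma~\ref{lm:A_continuity} (to replace $A^{t_k,t^+}_{m_k}$ by $A^{t_*,t_*+\tau_j}_{m_*}$, with errors $O(|t_k-t_*|)+O(W_1(m_k,m_*))+O(\alpha(\cdot))$, all $\to 0$) plus condition~\ref{def_tangent:cond_proximate}; estimating (3) uses~\eqref{estima:action_continuous} together with condition~\ref{def_tangent:cond_optima}; estimating (1) uses Lemma~\ref{lm:Theta_pi}; estimating (4) uses condition~5 of Definition~\ref{def:tangent} plus continuity of $w\mapsto\mathrm{dist}(v,\widehat F(t,w,\nu))$ in the $W_1$ sense, noting $\widehat F(t_k,\cdot,\nu_k)\to\widehat F(t_*,\cdot,\nu_*)$ and that $\widehat F$ only depends on $w$ through $\mathrm p(w)$ and on $\nu$ through $\mathrm p_\#\nu$.

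Carrying this out, for $k$ and then $j$ large enough one produces $(t^+,\nu^+,\phi^+)\in\mathcal{Z}$ and $\gamma\in\mathcal{L}^c_*(\nu_k)$ satisfying (1)--(4) with the step size $t^+-t_k=\tau_j\le 1/k$, contradicting the assumption that $(t_k,\nu_k,\phi_k)$ was bad. This yields the desired $\theta_n$. I expect the main obstacle to be bookkeeping in the transport step — in particular ensuring simultaneously that $\gamma\in\mathcal{L}^c_*(\nu_k)$ (so the support constraint $\mathbb{B}_c\times[-c,c]$ on velocities survives the lift through the composition with the optimal plan) and that $\phi^+$ genuinely lies in $\mathcal{V}(t^+,\mathrm p_\#\nu^+)$ rather than in $\mathcal{V}$ at a nearby point; reconciling these two requires either a careful use of upper semicontinuity with a diagonal extraction, or re-applying $\mathcal{D}^c_F\mathcal{V}$ at the moved base point, and getting all four error terms below $1/n$ uniformly is where the estimates from Lemmas~\ref{lm:Theta_pi}, \ref{lm:A_continuity} and inequality~\eqref{estima:action_continuous} must be assembled with care.
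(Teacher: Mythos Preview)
Your overall strategy—combine compactness of $\mathcal{Z}_n$ with the pointwise nonemptiness of $\mathcal{D}^c_F\mathcal{V}$, and transport the derivative data to nearby points via the composition $\pi^0*\gamma$—is exactly the right one, and the paper follows the same path (phrased as an open-cover argument rather than a contradiction, but that is cosmetic). You have also correctly located the only real difficulty: ensuring that the target triple $(t^+,\nu^+,\phi^+)$ actually lies in $\mathcal{Z}$, i.e.\ that $\phi^+\in\mathcal{V}(t^+,\mathrm{p}_\#\nu^+)$.

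The gap is in how you propose to construct the target. You set $\nu^+=\Xi^{\tau}{}_\#\gamma_k$, which \emph{depends on the moving base point} $(t_k,\nu_k,\phi_k)$; this makes condition~(1) trivial but forces you to find $\phi^+\in\mathcal{V}(t^+,\mathrm{p}_\#\nu^+)$ at a point that varies with $k$. Upper semicontinuity of $\mathcal{V}$ goes the wrong way for this: it lets you pass elements of $\mathcal{V}$ \emph{to} a limit point, not \emph{from} one. Your alternative—re-applying $\mathcal{D}^c_F\mathcal{V}$ at the shifted base points—just reproduces the original nonuniformity problem. So as written the transport step does not close.

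The fix (and this is what the paper does) is to keep the target $(t^+,\nu^+,\phi^+)$ \emph{fixed}, equal to the triple produced by the derivative at the limit/center point $(t_*,\nu_*,\phi_*)$. Then $\phi^+\in\mathcal{V}(t^+,\mathrm{p}_\#\nu^+)$ holds once and for all, with no semicontinuity needed. Only $\gamma$ moves: set $\gamma_k=\pi^0_{\nu_k,\nu_*}*\zeta_*$ where $\zeta_*\in\mathcal{L}^c_*(\nu_*)$ is the lift of the derivative data. Condition~(1) is now a genuine estimate—$W_1(\Xi^{t^+-t_k}{}_\#\gamma_k,\nu^+)$ small—but it follows directly from both parts of Lemma~\ref{lm:Theta_pi}; conditions (2)--(4) go through exactly as you sketched via Lemma~\ref{lm:A_continuity}, \eqref{estima:action_continuous}, and Lipschitz continuity of $\mathrm{dist}(\cdot,\widehat F)$. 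A side remark: in your contradiction setup you write ``step size $t^+-t\le 1/k$'', but $\theta_n$ is a \emph{lower} bound on the step (this is what makes the later iteration terminate), so the negation should read ``for every target with $t^+-t\ge 1/k$''. With the fixed-target construction the step size stays bounded below by a positive constant coming from the limit point, and the contradiction follows.
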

\begin{proof}
	Let $(s,\eta,\psi)\in\mathcal{Z}_n$. Denote $\mu_{\eta}\triangleq \mathrm{p}_\#\eta$. Since $\mathcal{D}^c_F(s,\mu_\eta,\psi)\neq\varnothing$ and the mapping $\beta\mapsto\int_{\tdr\times\rdp}\mathrm{dist}(v,F(t,x,m))\beta(d(x,v))$ is continuous, there exist $\vartheta_{s,\eta,\psi}\in (0,1/n)$, $\omega_{s,\eta,\psi}\in\mathcal{L}^c(\mu_\eta)$, $\bar{\eta}^+_{s,\eta,\psi}\in\tdr$ and $\psi^+_{s,\eta,\psi}\in \mathrm{Lip}_C(\td)$ such that
	\begin{itemize}
		\item $\bar{\eta}^+_{s,\eta,\psi}=\Theta^{2\vartheta_{s,\eta,\psi}}{}_\#\omega_{s,\eta,\psi}$;
		\item $\psi^+_{s,\eta,\psi}\in\mathcal{V}(s+2\vartheta_{s,\eta,\psi},\mathrm{p}_\#\bar{\eta}^+_{s,\eta,\psi})$;
		\item $|A^{s,s+2\vartheta_{s,\eta,\psi}}_{\mu_\eta}\psi^+_{s,\eta,\psi}-\psi|< \vartheta_{s,\eta,\psi}/n$;
		\item $[\psi^+_{s,\eta,\psi},\bar{\eta}^+_{s,\eta,\psi}]> [\psi,\widehat{\mu_\eta}]-\vartheta_{s,\eta,\psi}/n$;
		\item $\int_{\td\times\rdp}\mathrm{dist}(v,F(t,x,m))\omega_{s,\eta,\psi}(d(x,v))< 1/(2n).$
	\end{itemize}

	Now define the probability $\zeta_{s,\eta,\psi}\in \mathcal{P}(\tdr\times\rdp)$ in the following way.
	If $\varphi\in C_b(\tdr\times\rdp)$,
	\begin{equation*}
	\label{intro:zeta_shigt}
	\begin{split}\int_{\tdr\times\rdp}&\varphi(x,z,a,b)\zeta_{s,\eta,\psi}(d(x,z,a,b))\\&\triangleq\int_{\tdr}\int_{\mathbb{R}} \int_{\rdp}\varphi(x,z,a,b)\omega_{s,\eta,\psi}(d(a,b)|x)\eta(dz|x)\mu_\eta(dx).
	\end{split}
	\end{equation*} 
	Clearly,  the projection of $\zeta_{s,\eta,\psi}$ on $\td\times\rdp$ is $\omega_{s,\eta,\psi}$, when its projection on $\tdr$ is $\eta$. Thus,
	$\zeta_{s,\eta,\psi}\in\mathcal{L}_*^c(\eta)$.  Let $\eta^+_{s,\eta,\psi}\triangleq \Xi^{2\vartheta_{s,\eta,\psi}}{}_\#\zeta_{s,\eta,\psi}$. We have that 
	$\mathrm{p}_\#\eta_{s,\eta,\psi}=\mathrm{p}_\#\bar{\eta}_{s,\eta,\psi}$ and 
	\begin{equation*}
	\begin{split}
	[\psi^+_{s,\eta,\psi},&\eta^+_{s,\eta,\psi}]=\int_{\tdr\times\rdp}(\psi^+_{s,\eta,\psi}(x)+z) \eta^+_{s,\eta,\psi}(d(x,z))\\&=
	\int_{\td}\int_{\mathbb{R}}\int_{\rdp}(\psi^+_{s,\eta,\psi}(x+2\vartheta_{s,\eta,\psi}a)+z+2\vartheta_{s,\eta,\psi}b) \omega_{s,\eta,\psi}(d(a,b)|x)\eta(dz|x)\mu_\eta(dx)\\
	&=\int_{\td}\int_{\rdp}(\psi^+_{s,\eta,\psi}(x+2\vartheta_{s,\eta,\psi}a)+2\vartheta_{s,\eta,\psi}b) \omega_{s,\eta,\psi}(d(a,b)|x)\mu_\eta(dx)\\ &{}\hspace{100pt}+\int_{\td}\int_{\mathbb{R}}z\eta(dz|x)\mu_\eta(dx)\\&=
	[\psi^+_{s,\eta,\psi},\bar{\eta}^+_{s,\eta,\psi}]+\int_{\tdr}z\eta(d(x,z)).
	\end{split}
	\end{equation*} Analogously, 
	$$[\psi,\eta]=[\psi,\mu_\eta]+\int_{\tdr}z\eta(d(x,z)). $$
	
	Finally, let $s^+_{s,\eta,\psi}\triangleq s+2\vartheta_{s,\eta,\psi}$.
	
	One can summarize the properties of $(s^+_{s,\eta,\psi},\eta^+_{s,\eta,\psi},\psi^+_{s,\eta,\psi})$ and $\zeta_{s,\eta,\psi}$ as follows. 
	\begin{itemize}
		\item $(s^+_{s,\eta,\psi},\eta^+_{s,\eta,\psi},\psi^+_{s,\eta,\psi})\in \mathcal{Z}$;
		\item ${\eta}^+_{s,\eta,\psi}=\Xi^{(s^+_{s,\eta,\psi}-s)}{}_\#\zeta_{s,\eta,\psi}$;
		\item $\psi^+_{s,\eta,\psi}\in\mathcal{V}(s^+_{s,\eta,\psi},\mathrm{p}_\#{\eta}^+_{s,\eta,\psi})$;
		\item $|A^{s,s+\vartheta_{s,\eta,\psi}}_{\mathrm{p}_\#{\eta}}\psi^+_{s,\eta,\psi}-\psi|< (s^+_{s,\eta,\psi}-s)/(2n)$;
		\item $[\psi^+_{s,\eta,\psi},{\eta}^+_{s,\eta,\psi}]> [\psi,\eta]-(s^+_{s,\eta,\psi}-s)/(2n)$;
		\item $\int_{\tdr\times\rdp}\mathrm{dist}(v,\widehat{F}(t,w,\nu))\zeta_{s,\eta,\psi}(d(w,v))< 1/(2n).$
	\end{itemize}
	
	Let $\mathcal{E}_{s,\eta,\psi}$ be a set of triples $(t,\nu,\phi)\in \mathcal{Z}_n $ such that,   for some $\gamma\in\mathcal{L}_*^c(m)$, the following properties hold true:
	\begin{list}{(E\arabic{tmp})}{\usecounter{tmp}}
		\item\label{condE:time} $t\in (s-\vartheta_{s,\eta,\psi},s+\vartheta_{s,\eta,\psi})$;
		\item\label{condE:W} $W_1(\eta^+_{s,\eta,\psi},\Xi^{(s^+_{s,\eta,\psi}-t)}{}_\#\gamma)<(s^+_{s,\eta,\psi}-t)/n$;
		\item\label{condE:A} $\|A^{t,s^+_{s,\eta,\psi}}_{\mathrm{p}_\#\nu}\psi^{+}_{s,\eta,\psi}-\phi\|<(s^+_{s,\eta,\psi}-t)/n$;
		\item\label{condE:int} $[\psi^+_{s,\eta,\psi},\eta^+_{s,\eta_\psi}]> [\phi,\nu]-(s^+_{s,\eta,\psi}-t)/n$;
		\item\label{condE:F} $$\int_{\tdr\times\rdp}\mathrm{dist}(v,\widehat{F}(t,w,\nu))\gamma(d(w,v))<1/n. $$
	\end{list}
	
	Now, we shall prove that each set $\mathcal{E}_{s,\eta,\psi}$ is open in $\mathcal{Z}_n$.
	Let $(t_*,\nu_*,\phi_*)\in \mathcal{E}^n_{s,\eta,\psi}$. There exists  $\gamma_*\in \mathcal{L}_*^c(\nu_*)$ such that conditions (E1)--(E5) are fulfilled for $(t_*,\nu_*,\phi_*)$ and $\gamma_*$. Further, let $\delta>0$, and let $(t,\nu,\phi)\in \mathcal{Z}_n$ be such that $|t_*-t|<\delta$, $W_1(\nu_*,\nu)<\delta$, $\|\phi_*-\phi\|<\delta$. We shall show that, if $\delta$ is sufficiently small, then conditions (E1)--(E5) hold true for $(t,m,\phi)$ and
	$\gamma\triangleq \pi^0_{\nu,\nu_*}*\gamma_*$. Here $\pi^0_{\nu,\nu_*}$ stands for an optimal plan between $\nu$ and $\nu_*$, while $*$ denotes the  composition of probabilities  introduced by (\ref{intro:composition}). This will imply that $\mathcal{E}_{s,\eta,\psi}$ is open.
	
	First, notice that if $\gamma_*\in\mathcal{L}_*^c(\nu_*)$, then 	
	$\gamma=\pi^0_{\nu,\nu_*}*\gamma_*$ belongs to 	$\mathcal{L}_*^c(\nu)$. 
	
	Further,  condition (E\ref{condE:time}) holds for $(t,\nu,\phi)$ if $\delta<\vartheta_{s,\eta,\psi}-|t_*-s|$.  	
	
	Since $\gamma_*\in\mathcal{L}_*^c(\nu_*)$, by Lemma \ref{lm:Theta_pi} we have that $$|W_1(\Xi^{s^+_{s,\eta,\psi}-t}{}_\#\gamma,\Xi^{s^+_{s,\eta,\psi}-t_*}{}_\#\gamma_*)|\leq 2c\delta. $$ Therefore, condition (E\ref{condE:W}) is valid for $(t,\nu,\phi)$, if
	$$(2c+1/n)\delta<(s^+_{s,\eta,\psi}-t_*)/n-W_1(\eta^+_{s,\eta,\psi},\Xi^{s^+_{s,\eta,\psi}-t_*}{}_\#\gamma_*). $$
	
	Analogously, by Lemma \ref{lm:A_continuity} 
	\begin{equation*}\begin{split}
	\Bigl|\|A^{t,s^+_{s,\eta,\phi}}_{\mathrm{p}_\#\nu}\psi^+_{s,\eta,\psi}&-\phi\|- \|A^{t_*,s^+_{s,\eta,\phi}}_{\mathrm{p}_\#\nu_*}\psi^+_{s,\eta,\psi}-\phi_*\|\Bigr|\\&\leq \delta+ (C+1)[L3\vartheta_{s,\eta,\psi}\delta+R\delta+3\vartheta_{s,\eta,\psi}\alpha(\delta)]. \end{split}
	\end{equation*} Therefore, in the case when
	\begin{equation*}
	\begin{split}
	(1+L3\vartheta_{s,\eta,\psi}(C+1)&+R+1/n)\delta+3\vartheta_{s,\eta,\psi}(C+1)\alpha(\delta)\\
	&<(s^+_{s,\eta,\psi}-t_*)/n-\|A^{t_*,s^+_{s,\eta,\phi}}_{\mathrm{p}_\#\nu_*}\psi^+_{s,\eta,\psi}-\phi_*\| 
	\end{split}
	\end{equation*} condition (E\ref{condE:A}) holds for $(t,\nu,\phi)$. 
	
	By (\ref{estima:action_continuous}) we have that $|[\phi_*,\nu_*]-[\phi,\nu]|\leq (C+1)\delta$. Therefore, condition (E\ref{condE:int}) holds true for $(t,\nu,\phi)$ if 
	$$(C+1+1/n)\delta<[\psi^+_{s,\eta,\psi},\eta^+_{s,\eta,\psi}]- [\phi_*,\nu_*]+(s^+_{s,\eta,\psi}-t_*)/n. $$
	
	Finally, notice that $|\mathrm{dist}(v,\widehat{F}(t,w,\nu)-\mathrm{dist}(v,\widehat{F}(t,w,\nu_*)|\leq 2LW_1(\nu,\nu_*)$ and the function $(w,v)\mapsto \mathrm{dist}(v,\widehat{F}(t,w,\nu_*)$ is $(2L+1)$-Lipschitz continuous. Thus, by the second statement of Lemma \ref{lm:Theta_pi} we get
	\begin{equation*}
	\begin{split}
	\Bigl|\int_{\tdr\times\rdp}\mathrm{dist}(v,\widehat{F}(t,w,\nu_*))\gamma_*(d(w,v))-\int_{\tdr\times\rdp}\mathrm{dist}(v,\widehat{F}(t,w,\nu))&\gamma(d(w,v))\Bigr|
	\\ &\leq (4L+1)\delta. 
	\end{split}
	\end{equation*}
	Therefore, if one pick $\delta$ less than
	$$\frac{1}{4L+1}\left[\frac{1}{n}-\int_{\tdr\times\rdp}\mathrm{dist}(v,\widehat{F}(t,w,\nu_*))\gamma_*(d(w,v))\right], $$ then condition (E\ref{condE:F}) is valid for $(t,\nu,\phi)$ and $\gamma$.
	
	We have proved that if $(t_*,\nu_*,\phi_*)\in\mathcal{E}_{s,\eta,\psi}$, then its $\delta$-neighborhood in $\mathcal{Z}_n$ also lies in $\mathcal{E}_{s,\eta,\psi}$ for sufficiently small $\delta$. Furthermore, by construction $(s,\eta,\psi)\in \mathcal{E}_{s,\eta,\psi}$. Thus, $\{\mathcal{E}_{s,\eta,\psi}\}_{(s,\eta,\psi)\in\mathcal{Z}_n}$ is an open cover of $\mathcal{Z}_n$. Since $\mathcal{Z}_n$ is compact, there exists a finite number of triples $\{(s_i,\eta_i,\psi_i)\}_{i=1}^{I^n}$ such that
	$$\mathcal{Z}_n\subset \bigcup_{i=1}^{I^n}\mathcal{E}_{s_i,\eta_i,\psi_i}. $$ Put
	$$\theta_n\triangleq\min_{i=\overline{1,I^n}}\vartheta_{s_i,\eta_i,\psi_i}. $$ If $(t,\nu,\phi)\in \mathcal{Z}_n$, then there exists  $i=\overline{1,I^n}$ such that 
	$$(t,\nu,\phi)\in \mathcal{E}_{s_i,\eta_i,\psi_i}. $$ Choose $t^+\triangleq s^+_{s_i,\eta_i,\psi_i}$, $\nu^+\triangleq\eta^+_{s_i,\eta_i,\psi_i}$, $\phi^+\triangleq\psi^{+}_{s_i,\eta_i,\psi_i}$. Finally, let $\gamma$ be such that conditions (E1)--(E5) are fulfilled for $(t,\nu,\phi)$ and $\gamma$.
\end{proof}

For $\tau,\theta\in [0,T]$, $\tau<\theta$, define the mapping $\Lambda^{\tau,\theta}:\tdr\times\rdp\rightarrow \mathcal{C}_{\tau,\theta}$ by the rule: if $w\in \tdr$, $v\in\rdp$, then put
$\Lambda^{\tau,\theta}(w,v) $ be equal to the function $t\mapsto (w+(t-\tau)v)$.

Assume that $s,r\in [0,T]$, $s<r$, $m_*\in\ptd$, $\phi_*\in \mathcal{V}(s,m_*)$.  Put $r_n\triangleq r-1/n$. Without loss of generality we assume that $r_n>s$. Now, we construct a number $J_n$, a sequence of times $\{t^j_n\}_{j=0}^{J_n}$, sequences of probabilities $\{\nu_j^n\}_{j=0}^{J_n}$, $\{\eta^j_n\}_{j=0}^{J_n}$, $\{\gamma_n^j\}_{j=1}^{J_n}$, $\{\chi^j_n\}_{j=1}^{J_n}$ and a sequence of functions $\{\phi_j^n\}_{j=0}^{J_n}$ by the following rules.

\begin{itemize}
	\item Set $t^0_n\triangleq s$, $\nu^0_n=\eta^0_n\triangleq \widehat{m_*}$, $\phi^0_n\triangleq \phi_*$. 
	\item If $t^{j-1}_n$, $\nu^{j-1}_n$, $\eta^{j-1}_n$, $\phi^{j-1}_n$ are already constructed and $t^{j-1}_n\leq r_n$, then apply Lemma \ref{lm:step} with $t=t^{j-1}_n$, $\nu=\nu^{j-1}_n$, $\phi=\phi^{j-1}_n$ and choose $t^{j}_n$  equal to $t^+$, $\nu^{j}_n$  equal to $\nu^+$, $\phi^{j}_n$  equal to $\phi^+$, $\gamma^{j}_n$  equal to $\gamma$. Note that $\phi^{j}_n\in\mathcal{V}(t^{j-1}_n,\mathrm{p}_\#\nu^{j-1}_n)$, $\gamma^j_n\in\mathcal{L}^c_*(\nu_n^{j-1})$. Let $\pi_n^{j-1}$ be an optimal plan between $\eta^{j-1}_n$ and $\nu^{j-1}_n$. Set $\bar{\gamma}^{j}_n\triangleq \pi^{j-1}_n*\gamma^j_n$. Obviously, $\bar{\gamma}^j_n\in\mathcal{L}^c_*(\eta_n^{j-1})$. 
	Define the probability $\chi^{j}_n\in\pc{t^{j-1}_n}{t^{j}_n}$ by the rule: 
	\begin{equation}\label{intro:chi_j_n}
	\chi^{j}_n\triangleq\Lambda^{t^{j-1}_n,t^{j}_n}{}_\#\bar{\gamma}^{j}_n
	\end{equation} 
	Finally, set $\eta^{j}_n\triangleq \hat{e}_{t^{j}_n}{}_\#\chi^{j}_n=\Xi^{t^{j}_n-t^{j-1}_n}{}_\#\bar{\gamma}_n^j$.
	\item If $t^{j}_n>r_n$, then choose $J_n$ to be equal to $j$.
\end{itemize}
Notice that $t^{j+1}_n-t^{j}_n\geq\theta_n$. Thus, $J_n$ is finite.

Let ${\gamma}^{J_n+1}_n\in\mathcal{L}^c_*(\nu^{J_n}_n)$ be such that
$$\int_{\tdr\times\rdp} \mathrm{dist}(v,\widehat{F}(t^{J_n}_n,w,\nu^{J_n}_n))\gamma^{J_n+1}_n(d(x,v))=0. $$ Let $\bar{\gamma}^{J_n+1}_n\triangleq\pi^{J_n}_n*\gamma^{J_n+1}_n$, where $\pi^{J_n}_n$ is an optimal plan between $\eta^{J_n}_n$ and $\nu^{J_n}_n$.	Define the probability $\chi^{J_n+1}_n\in\pc{t^{J_n}_n}{r}$ by the rule: 
\begin{equation}\label{intro:chi_J_n_plus}
\chi^{J_n+1}_n\triangleq \Lambda^{t^{J_n}_n,r}{}_\#\bar{\eta}^{J_n+1}_n,\ \ j=\overline{0,J_n}.
\end{equation} 

We have that 
$$\phi_n^j\in\mathcal{V}(t_n^j,\mathrm{p}_\#\nu_n^j).  $$

Put
\begin{equation}\label{intro:chi_n}
\chi_n\triangleq \chi^1_n\odot\ldots\chi^{J_n}_n\odot\chi^{J_n+1}_n. 
\end{equation} Here $\odot$ stands for concatenation of probabilities (see (\ref{intro:concatination})). Since $\hat{e}_{t_n^j}{}_\#\chi_n^j=\hat{e}_{t_n^j}{}_\#\chi_n^{j+1}$ when $j=1,\ldots, J_n$, the probability $\chi_n$ is well-defined. Moreover, $\chi_n\in\pc{s}{r}$, $\hat{e}_{t^j_n}{}_\#\chi_n=\eta^j_n$. 

Let us point out the properties of the constructed sequences.
\begin{lemma}\label{lm:nu_j_mu_j} For $j=\overline{0,J_n}$,
	$$W_1(\eta^j_n,\nu^j_n)\leq (t^j_n-s)/n.$$
\end{lemma}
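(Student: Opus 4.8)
The plan is to prove the estimate by induction on $j$. The base case $j=0$ is immediate: by construction $\eta^0_n=\nu^0_n=\widehat{m_*}$, hence $W_1(\eta^0_n,\nu^0_n)=0=(t^0_n-s)/n$.

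For the inductive step I assume $W_1(\eta^{j-1}_n,\nu^{j-1}_n)\leq (t^{j-1}_n-s)/n$ and compare $\eta^j_n$ with $\nu^j_n$ through the auxiliary measure $\Xi^{t^j_n-t^{j-1}_n}{}_\#\gamma^j_n$. On the one hand, the triple $(t^j_n,\nu^j_n,\phi^j_n)$ together with $\gamma^j_n\in\mathcal{L}^c_*(\nu^{j-1}_n)$ is produced by applying Lemma~\ref{lm:step} at the point $(t^{j-1}_n,\nu^{j-1}_n,\phi^{j-1}_n)\in\mathcal{Z}_n$, so property~1 of that lemma gives
$$W_1\bigl(\Xi^{t^j_n-t^{j-1}_n}{}_\#\gamma^j_n,\nu^j_n\bigr)<\frac{t^j_n-t^{j-1}_n}{n}.$$
On the other hand, by the construction of $\eta^j_n$ one has $\eta^j_n=\Xi^{t^j_n-t^{j-1}_n}{}_\#\bar{\gamma}^j_n$ with $\bar{\gamma}^j_n=\pi^{j-1}_n*\gamma^j_n$, where $\pi^{j-1}_n\in\Pi(\eta^{j-1}_n,\nu^{j-1}_n)$ is an optimal plan. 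Applying the second assertion of Lemma~\ref{lm:Theta_pi} with $\gamma=\gamma^j_n\in\mathcal{L}_*(\nu^{j-1}_n)$, $\tau=t^j_n-t^{j-1}_n$, $\pi^0=\pi^{j-1}_n$, $\nu=\nu^{j-1}_n$ and $\nu'=\eta^{j-1}_n$ yields
$$W_1\bigl(\Xi^{t^j_n-t^{j-1}_n}{}_\#\gamma^j_n,\eta^j_n\bigr)\leq W_1(\nu^{j-1}_n,\eta^{j-1}_n)\leq \frac{t^{j-1}_n-s}{n},$$
the last step being the induction hypothesis.

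It then remains to combine the two displays by the triangle inequality for $W_1$:
$$W_1(\eta^j_n,\nu^j_n)\leq W_1\bigl(\eta^j_n,\Xi^{t^j_n-t^{j-1}_n}{}_\#\gamma^j_n\bigr)+W_1\bigl(\Xi^{t^j_n-t^{j-1}_n}{}_\#\gamma^j_n,\nu^j_n\bigr)\leq \frac{t^{j-1}_n-s}{n}+\frac{t^j_n-t^{j-1}_n}{n}=\frac{t^j_n-s}{n},$$
which closes the induction. There is no genuine obstacle in this argument; the only thing to keep straight is the bookkeeping of the marginals — that $\gamma^j_n$ has $\tdr$-marginal $\nu^{j-1}_n$ (so that Lemma~\ref{lm:Theta_pi}(2) applies with $\nu=\nu^{j-1}_n$) and that $\pi^{j-1}_n$ is genuinely an optimal transport plan between $\eta^{j-1}_n$ and $\nu^{j-1}_n$, both of which hold by construction. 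Everything else reduces to the triangle inequality together with the two quantitative bounds already established in Lemmas~\ref{lm:step} and~\ref{lm:Theta_pi}.
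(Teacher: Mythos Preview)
Your proof is correct and follows exactly the same route as the paper: induction on $j$, the trivial base case $\eta^0_n=\nu^0_n$, and the triangle inequality splitting $W_1(\eta^j_n,\nu^j_n)$ through $\Xi^{t^j_n-t^{j-1}_n}{}_\#\gamma^j_n$, invoking Lemma~\ref{lm:Theta_pi}(2) for one piece and Lemma~\ref{lm:step}(1) for the other. Your write-up is in fact slightly more explicit than the paper's about which marginals go where in the application of Lemma~\ref{lm:Theta_pi}.
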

\begin{proof}
	First, we have that $W_1(\eta^0_n,\nu^0_n)=0$. Further, notice that $$\eta^j_n=\Xi^{t^j_n-t^{j-1}_n}{}_\#(\pi^{j-1}_n*\gamma^j_n).$$ Thus, if $W_1(\eta^{j-1}_n,\nu^{j-1}_n)\leq (t^{j-1}_n-s)/n$, then, using the second statement  of Lemma~\ref{lm:Theta_pi} and Lemma~\ref{lm:step}, we conclude that
	\begin{equation*}\begin{split}W_1(\eta^j_n,\nu^j_n)&= W_1(\Xi^{t^j_n-t^{j-1}_n}{}_\#(\pi^{j-1}_n*\gamma^j_n),\nu^j_n)\\ 
	&\leq 	W_1(\Xi^{t^j_n-t^{j-1}_n}{}_\#(\pi^{j-1}_n*\gamma^j_n),\Xi^{t^j_n-t^{j-1}_n}{}_\#\gamma^j_n)+ 
	W_1(\Xi^{t^j_n-t^{j-1}_n}{}_\#\gamma^j_n,\nu^j_n)\\&\leq
	(t^{j-1}_n-s)/n+(t^j_n-t^{j-1}_n)/n. \end{split}\end{equation*}
	
\end{proof}

Let $q_n(\cdot)\in\mathcal{M}_{s,r}$ be defined by the rule: $q_n(t)\triangleq e_t{}_\#\chi_n$.  Notice that \begin{equation}\label{equal:q_n_eta}
q_n(t_n^j)=\mathrm{p}_\#\eta_n^j,\ \ j=0,\ldots,J_n.
\end{equation}
\begin{lemma}\label{lm:B_distance} There exist constants $c_1$ and $c_2$ such that the following estimate holds true:
	$$\|B_{q_n(\cdot)}^{s,r}\phi_n^{J_n}-\phi_*\|\leq c_1\alpha(1/n)+c_2/n.$$
\end{lemma}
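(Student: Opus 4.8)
The plan is to estimate $\|B_{q_n(\cdot)}^{s,r}\phi_n^{J_n}-\phi_*\|$ by telescoping along the partition $s=t_n^0<t_n^1<\dots<t_n^{J_n}\leq r_n<r$ and comparing, on each subinterval $[t_n^{j-1},t_n^j]$, the Bellman propagator $B_{q_n(\cdot)}^{t_n^{j-1},t_n^j}$ with the ``frozen'' operator $A_{m}^{t_n^{j-1},t_n^j}$ where $m=\mathrm{p}_\#\nu_n^{j-1}$. First I would use the semigroup property \eqref{equality:B_semogroup} to write
$$B_{q_n(\cdot)}^{s,r}\phi_n^{J_n}=B_{q_n(\cdot)}^{t_n^0,t_n^1}B_{q_n(\cdot)}^{t_n^1,t_n^2}\cdots B_{q_n(\cdot)}^{t_n^{J_n-1},t_n^{J_n}}\phi_n^{J_n},$$
and then, starting from the top index and working down, replace each factor by the corresponding $A$-operator and then by the identity-on-$\phi$ step supplied by Lemma \ref{lm:step}(2), namely $\|A_{\mathrm{p}_\#\nu_n^{j-1}}^{t_n^{j-1},t_n^j}\phi_n^j-\phi_n^{j-1}\|<(t_n^j-t_n^{j-1})/n$.

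The key quantitative inputs are: (i) a uniform Lipschitz bound $K$ on all the $\phi_n^j$, coming from $\phi_n^j\in\mathrm{BL}_{M,C}(\td)$ together with Lemma \ref{lm:lipschitz}/\ref{lm:A_lipshitz} applied over a fixed horizon $[0,T]$, so the constants in the comparison lemmas do not degrade with $j$; (ii) Lemma \ref{lm:bellman_close}, which gives
$$\|A_{m}^{t_n^{j-1},t_n^j}\psi-B_{q_n(\cdot)}^{t_n^{j-1},t_n^j}\psi\|\leq (K+1)\bigl(\alpha(t_n^j-t_n^{j-1})+LR(t_n^j-t_n^{j-1})+L\delta_1\bigr)(t_n^j-t_n^{j-1})$$
provided $W_1(q_n(t),m)\leq\delta_1$ for $t\in[t_n^{j-1},t_n^j]$; and (iii) a bound on this $\delta_1$. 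For (iii) I would combine: the flow $q_n(\cdot)$ has speed $\leq R$ (it is a projection of a solution of the differential inclusion built from the affine curves $\Lambda$, whose velocities lie in $\mathbb{B}_c\times[-c,c]$, so $W_1(q_n(t),q_n(t_n^{j-1}))\leq c(t-t_n^{j-1})\leq c/n$); $q_n(t_n^{j-1})=\mathrm{p}_\#\eta_n^{j-1}$ by \eqref{equal:q_n_eta}; and $W_1(\eta_n^{j-1},\nu_n^{j-1})\leq (t_n^{j-1}-s)/n\leq T/n$ by Lemma \ref{lm:nu_j_mu_j}. Hence $\delta_1\leq (c+T)/n$ on each subinterval (up to relabelling constants), so the $L\delta_1$ term contributes $O(1/n)$ after summing the lengths, which total $\leq r-s\leq T$.

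Assembling: each replacement of one $B$-factor by an $A$-factor costs, by Lemma \ref{lm:bellman_close}, at most $(K+1)(\alpha(t_n^j-t_n^{j-1})+LR(t_n^j-t_n^{j-1})+L(c+T)/n)(t_n^j-t_n^{j-1})$; since the mesh is $<1/n$ we may bound $\alpha(t_n^j-t_n^{j-1})\leq\alpha(1/n)$, and summing over $j$ the factors $(t_n^j-t_n^{j-1})$ telescope to at most $T$, giving a total of $(K+1)(\alpha(1/n)+LRT/n+L(c+T)/n)\cdot T$; the successive $\|A\phi_n^j-\phi_n^{j-1}\|$ steps from Lemma \ref{lm:step}(2) add at most $\sum_j(t_n^j-t_n^{j-1})/n\leq T/n$; and each of these intermediate estimates is propagated through the remaining $B$-factors without loss, because $B$ is order-preserving and nonexpansive in the sup-norm in max-plus algebra (it is $1$-Lipschitz: $\|B_{m(\cdot)}^{s,r}\psi-B_{m(\cdot)}^{s,r}\psi'\|\leq\|\psi-\psi'\|$, which follows from the definition or from Lemma \ref{lm:B_continuity} with $m=m'$, $r=r'$). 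Collecting the $\alpha$-terms into $c_1\alpha(1/n)$ with $c_1=(K+1)T$ and the rest into $c_2/n$ with $c_2$ depending only on $K,L,R,T,c$ yields the claimed bound.

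\textbf{Main obstacle.} The delicate point is controlling the propagation of errors through the composition of the $J_n$ Bellman operators \emph{without} the number of terms $J_n$ (which grows like $1/\theta_n$ and is uncontrolled) entering the final estimate. This is resolved precisely because the per-step error in Lemma \ref{lm:bellman_close} carries the factor $(t_n^j-t_n^{j-1})$, so the errors sum against the partition lengths rather than the number of intervals, and because $B$ is $1$-Lipschitz in sup-norm so earlier errors are not amplified; verifying the uniform Lipschitz constant $K$ for all $\phi_n^j$ over the full horizon, and checking the $W_1$-closeness $\delta_1=O(1/n)$ of $q_n(\cdot)$ to the frozen measures on each subinterval, are the two bookkeeping steps that make this work.
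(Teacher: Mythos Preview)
Your approach is correct and in one respect cleaner than the paper's. The paper proceeds in two separate backward inductions: first it shows
\[
\Bigl\|A^{t^0_n,t^1_n}_{m^0_n}\cdots A^{t^{J_n-1}_n,t^{J_n}_n}_{m^{J_n-1}_n}\phi^{J_n}_n-\phi_*\Bigr\|\leq (t^{J_n}_n-s)/n,
\]
and then it compares the $B$-composition with this $A$-composition. In that second induction it must track the growing Lipschitz constants $C^*(t^j_n)=(C+1)e^{L(T-t^j_n)}-1$ of the partial compositions $B^{t^j_n,t^{J_n}_n}_{q_n(\cdot)}\phi^{J_n}_n$ and $A^{t^j_n,t^{j+1}_n}_{m^j_n}\cdots A^{t^{J_n-1}_n,t^{J_n}_n}_{m^{J_n-1}_n}\phi^{J_n}_n$ via Lemmas~\ref{lm:lipschitz} and~\ref{lm:A_lipshitz}. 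Your direct telescope $\|B^{s,t^{j}_n}\phi_n^{j}-B^{s,t^{j-1}_n}\phi_n^{j-1}\|\le\|B^{t^{j-1}_n,t^{j}_n}\phi_n^{j}-\phi_n^{j-1}\|$ applies Lemma~\ref{lm:bellman_close} only to $\phi_n^j$ itself, so the uniform bound $\phi_n^j\in\mathrm{Lip}_C(\td)$ from $\mathcal{V}\subset\mathrm{BL}_{M,C}$ suffices and the exponential growth of Lipschitz constants never enters. The nonexpansiveness of $B$ in sup-norm (which you correctly note follows from Lemma~\ref{lm:B_continuity} with $m=m'$, $r=r'$) does the work of propagation. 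Both routes give the same final form $c_1\alpha(1/n)+c_2/n$; yours just needs fewer moving parts.

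One omission to fix: your displayed semigroup factorization stops at $t_n^{J_n}$, but the operator in the statement is $B^{s,r}_{q_n(\cdot)}$ with $r>t_n^{J_n}$. You need one more line, exactly as the paper does at the end: by nonexpansiveness and Lemma~\ref{lm:B_continuity} (or the second part of Lemma~\ref{lm:lipschitz}),
\[
\|B^{s,r}_{q_n(\cdot)}\phi_n^{J_n}-B^{s,t_n^{J_n}}_{q_n(\cdot)}\phi_n^{J_n}\|\leq\|B^{t_n^{J_n},r}_{q_n(\cdot)}\phi_n^{J_n}-\phi_n^{J_n}\|\leq (C+1)R(r-t_n^{J_n})\leq (C+1)R/n,
\]
which is absorbed into $c_2/n$.
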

\begin{proof} To simplify designations put $m_n^j\triangleq \mathrm{p}_\#\nu_n^j$.
	
	Let us show that
	\begin{equation}\label{A_j_estima:pr_lm_B_dist}
	\|A^{t^j_n,t^{j+1}_n}_{m^j_n}\ldots A^{t^{J_n-1}_n,t^{J_n}_n}_{m^{J_n-1}_n}\phi^{J_n}_n-\phi^j_n\|\leq (t^{J_n}_n-t^{j}_n)/n.
	\end{equation} We prove this inequality by the backward induction on $j$.
	By Lemma \ref{lm:step} we have that $$\|A^{t^{J_n-1}_n,t^{J_n}_n}_{m^{J_n-1}_n}\phi^{J_n}_n-\phi_n^{J_n-1}\|\leq
	(t^{J_n}_n-t^{J_n-1}_n)/n. $$
	Now, assume that that inequality (\ref{A_j_estima:pr_lm_B_dist}) holds true for some $j$. We have that
	\begin{equation*}
	\begin{split}
	\|A^{t^{j-1}_n,t^{j}_n}_{m^{j-1}_n}&A^{t^j_n,t^{j+1}_n}_{m^j_n}\ldots A^{t_{J_n-1}^n,t^{J_n}_n}_{m^{J_n-1}_n}\phi^{J_n}_n-\phi^{j-1}_n\|\\&\leq
	\|A^{t^{j-1}_n,t^{j}_n}_{m^{j-1}_n}A^{t^j_n,t^{j+1}_n}_{m^j_n}\ldots  A^{t^{J_n-1}_n,t^{J_n}_n}_{m^{J_n-1}_n}\phi^{J_n}_n- A^{t^{j-1}_n,t^{j}_n}_{m^{j-1}_n}\phi^{j}_n\|+\|A^{t^{j-1}_n,t^{j}_n}_{m^{j-1}_n}\phi^{j}_n- \phi^{j-1}_n\|
	\end{split}
	\end{equation*}
	By Lemma \ref{lm:step} we have that 
	$$\|A_{m_n^{j-1}}^{t_n^{j-1},t_n^j}\phi_n^j-\phi_n^{j-1}\|\leq (t_n^j-t_n^{j-1})/n. $$
	This, Lemma \ref{lm:A_continuity} and assumption (\ref{A_j_estima:pr_lm_B_dist}) imply the inequality:
	$$
	\|A^{t^{j-1}_n,t^{j}_n}_{m^{j-1}_n}A^{t^j_n,t^{j+1}_n}_{m^j_n}\ldots A^{t^{J_n-1}_n,t^{J_n}_n}_{m^{J_n-1}_n}\phi^{J_n}_n-\phi^{j-1}_n\|\\\leq
	(t^{J_n}_n-t^{j}_n)/n+(t^{j}_n-t^{j-1}_n)/n. $$ Hence (\ref{A_j_estima:pr_lm_B_dist}) is fulfilled for any $j=0,\ldots, J_n-1$.
	
	Definition of $q_n(\cdot)$, (\ref{equal:q_n_eta}) and Lemma \ref{lm:nu_j_mu_j} give, for each $j$, the estimate
	\begin{equation}\label{estima:q_n_m_n_distance}
	\sup_{t\in [t^j_n,t^{j+1}_n]}W_1(q_n(t),m^j_n)\leq (T+c)/n.
	\end{equation}
	Further, put
	$$C^*(s)\triangleq[(C +1)e^{L(T-s)}-1].$$
	Now, let us prove that, for any $j=0,\ldots, J_n-1$,
	\begin{equation}\label{ineq:A_B_j}
	\begin{split}
	\|B^{t^j_n,t^{J_n}_n}_{q_n(\cdot)}&\phi^{J_n}_n-A^{t^j_n,t^{j+1}_n}_{m^j_n}\ldots A^{t^{J_n-1}_n,t^{J_n}_n}_{m^{J_n-1}_n}\phi^{J_n}_n\|\\&\leq
	(C^*(t^{j+1}_n)+1)(\alpha(1/n)+L(R+T+c)/n+T/n)(t^{J_n}_n-t^j_n).
	\end{split}
	\end{equation} As above, we prove this inequality by the backward induction.
	First, since $C(t_n^{J_n})\geq C$, by Lemma \ref{lm:bellman_close} and estimate (\ref{estima:q_n_m_n_distance}) we have that 
	\begin{equation*}\begin{split}
	\|B^{t^{J_n-1}_n,t_n^{J_n}}_{q_n(\cdot)}&\phi_n^{J_n}-
	A^{t^{J_n-1}_n,t^{J_n}_n}_{m^{J_n-1}_n}\phi^{J_n}_n\|
	\\&\leq (C^*(t^{J_n}_n)+1)(\alpha(1/n)+L(R+T+c)/n)(t^{J_n}_n-t^{J_n-1}_n). \end{split}
	\end{equation*}
	
	Further, assume that (\ref{ineq:A_B_j}) is fulfilled for some $j$. We have that
	$$B^{t^{j-1}_n,t^{J_n}_n}_{q_n(\cdot)}\phi^{J_n}_n= B^{t^{j-1}_n,t^{j}_n}_{q_n(\cdot)}B^{t^{j}_n,t^{J_n}_n}_{q_n(\cdot)}\phi^{J_n}_n. $$ Additionally, by Lemmas \ref{lm:lipschitz} and \ref{lm:A_lipshitz} we have that
	$$B^{t^{j}_n,t^{J_n}_n}_{q_n(\cdot)}\phi^{J_n}_n,A^{t^j_n,t^{j+1}_n}_{m^j_n}\ldots A^{t^{J_n-1}_n,t^{J_n}_n}_{m^{J_n-1}_n}\phi^{J_n}_n\in\mathrm{Lip}_{C^*(t^j_n)}(\td). $$
	This, assumption of the induction (see (\ref{ineq:A_B_j})) and Lemma \ref{lm:bellman_close} imply that
	\begin{equation*}
	\begin{split}
	\Bigl\|B^{t^{j-1}_n,t^{J_n}_n}_{q_n(\cdot)}\phi^{J_n}_n-&
	A^{t^{j-1}_n,t^{j}_n}_{m^j_n}A^{t^j_n,t^{j+1}_n}_{m^j_n}\ldots A^{t^{J_n-1}_n,t^{J_n}_n}_{m^{J_n-1}_n}\phi^{J_n}_n\Bigr\|\\ = 
	\Bigl\|B^{t^{j-1}_n,t^{j}_n}_{q_n(\cdot)}(&B^{t^{j}_n,t^{J_n}_n}_{q_n(\cdot)}\phi^{J_n}_n)-
	A^{t^{j-1}_n,t^{j}_n}_{m^j_n}(A^{t^j_n,t^{j+1}_n}_{m^j_n}\ldots A^{t^{J_n-1}_n,t^{J_n}_n}_{m^{J_n-1}_n}\phi^{J_n}_n)\Bigr\|
	\\
	\leq (C^*(t^{j}_n)+&1)(\alpha(1/n)+L(R+T+c)/n+T/n)(t^{j}_n-t^{j-1}_n)\\&+ (C^*(t^{j+1}_n)+1)(\alpha(1/n)+L(R+T+c)/n+T/n)(t^{J_n}_n-t^j_n).
	\end{split}
	\end{equation*}
	Since the function $s\mapsto C^*(s)$ decrease, we conclude that (\ref{ineq:A_B_j}) is fulfilled for all $j$. 
	
	Since $t^0_n=s$, combining (\ref{A_j_estima:pr_lm_B_dist}) and (\ref{ineq:A_B_j}), we get
	\begin{equation}\label{ineq:B_psi_J_n_phi_star}
	\|B^{s,t^{J_n}_n}_{q_n(\cdot)}\phi^{J_n}_n-\phi_*\|\leq (C^*(0)+1)(\alpha(1/n)+(LR+LT+Lc+T)/n).
	\end{equation} 
	Finally, using Lemma \ref{lm:B_continuity},  we have that 
	$$\|B^{s,r}_{q_n(\cdot)}\phi^{J_n}_n-B^{s,t^{J_n}_n}_{q_n(\cdot)}\phi^{J_n}_n\|\leq (C+1)R(r-t^{J_n}_n)\leq (C+1)R/n. $$
	This and inequality (\ref{ineq:B_psi_J_n_phi_star}) yield the conclusion of the lemma.
\end{proof}

\begin{lemma}\label{lm:action_n}
	The following estimate holds true, for some constant $c_3>0$:
	$$[\phi_n^{J_n},\eta_n^{J_n}]>[\phi_*,\widehat{m_*}]-c_3/n. $$
\end{lemma}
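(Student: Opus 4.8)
The plan is to control the action against the auxiliary measures $\nu^j_n$ (which telescopes cleanly, with total error only $O(1/n)$) and to pass from $\nu^{J_n}_n$ to $\eta^{J_n}_n$ exactly once, at the final index, via Lemma~\ref{lm:nu_j_mu_j}.

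First I would record the one-step inequality coming from Lemma~\ref{lm:step}. By construction, for each $j=1,\dots,J_n$ the triple $(t^j_n,\nu^j_n,\phi^j_n)$ is produced by applying Lemma~\ref{lm:step} with $t=t^{j-1}_n$, $\nu=\nu^{j-1}_n$, $\phi=\phi^{j-1}_n$, so property~3 of that lemma gives
$$[\phi^j_n,\nu^j_n]>[\phi^{j-1}_n,\nu^{j-1}_n]-(t^j_n-t^{j-1}_n)/n,\qquad j=1,\dots,J_n.$$
Summing over $j$ the right-hand side telescopes and $\sum_{j=1}^{J_n}(t^j_n-t^{j-1}_n)=t^{J_n}_n-s$; since $t^0_n=s$, $\nu^0_n=\widehat{m_*}$ and $\phi^0_n=\phi_*$, this yields
$$[\phi^{J_n}_n,\nu^{J_n}_n]>[\phi_*,\widehat{m_*}]-(t^{J_n}_n-s)/n\geq [\phi_*,\widehat{m_*}]-T/n.$$

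Next I would convert $\nu^{J_n}_n$ into $\eta^{J_n}_n$. By Lemma~\ref{lm:nu_j_mu_j} with $j=J_n$ we have $W_1(\eta^{J_n}_n,\nu^{J_n}_n)\leq (t^{J_n}_n-s)/n\leq T/n$, and since $\phi^{J_n}_n\in\mathcal{V}(t^{J_n}_n,\mathrm{p}_\#\nu^{J_n}_n)\subset\mathrm{BL}_{M,C}(\td)\subset\mathrm{Lip}_C(\td)$, estimate~(\ref{estima:action_continuous}) (taken with both functions equal to $\phi^{J_n}_n$) gives
$$\bigl|[\phi^{J_n}_n,\nu^{J_n}_n]-[\phi^{J_n}_n,\eta^{J_n}_n]\bigr|\leq C\,W_1(\nu^{J_n}_n,\eta^{J_n}_n)\leq CT/n.$$
Combining the two displays produces $[\phi^{J_n}_n,\eta^{J_n}_n]>[\phi_*,\widehat{m_*}]-(1+C)T/n$, so the lemma holds with $c_3\triangleq (1+C)T$.

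I do not expect a genuinely hard step here; the only point that requires care is the choice of the family of measures to telescope through. Telescoping directly in $\eta^j_n$ would contribute, at each of the (in principle $\sim T/\theta_n$, hence $n$-dependently many) steps, an error of order $W_1(\eta^j_n,\nu^j_n)\leq T/n$, and these do not sum to an $O(1/n)$ bound. Telescoping in the reference measures $\nu^j_n$ instead makes the accumulated error collapse to exactly $(t^{J_n}_n-s)/n$, after which a single application of Lemma~\ref{lm:nu_j_mu_j} at $j=J_n$ absorbs the discrepancy between $\nu^{J_n}_n$ and $\eta^{J_n}_n$.
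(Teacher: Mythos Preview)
Your proposal is correct and follows essentially the same route as the paper: telescope the one-step inequality $[\phi^j_n,\nu^j_n]>[\phi^{j-1}_n,\nu^{j-1}_n]-(t^j_n-t^{j-1}_n)/n$ coming from Lemma~\ref{lm:step}, then use Lemma~\ref{lm:nu_j_mu_j} together with~(\ref{estima:action_continuous}) once at $j=J_n$ to pass from $\nu^{J_n}_n$ to $\eta^{J_n}_n$. The only cosmetic difference is that the paper bounds $t^{J_n}_n-s$ by $r-s$ rather than by $T$, arriving at $c_3=(C+1)(r-s)$ instead of your $(1+C)T$.
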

\begin{proof}
	First, notice that by construction
	$$[\phi^j_n,\nu_n^j]>[\phi^{j-1}_n,\nu_n^{j-1}]-(t_n^{j}-t_n^{j-1})/n. $$ Therefore, since $\phi^0_n=\phi_*$, $\nu^0_n=\widehat{m_*}$, $s=t_n^0$, $r>t^{J_n}_n$, we get
	$$[\phi^{J_n}_n,\nu_n^{J_n}]>[\phi_*,\widehat{m_*}]-(r-s)/n. $$
	Recall that $\phi^{J_n}_n\in\mathrm{Lip}_C(\td)$. Hence, using (\ref{estima:action_continuous}) and Lemma \ref{lm:nu_j_mu_j}, we conclude that
	$$[\phi_n^{J_n},\eta_n^{J_n}]>[\phi_*,\widehat{m_*}]-(C+1)(r-s)/n. $$
\end{proof}

\begin{proof}[Proof of Theorem \ref{th:viability}. Sufficiency]
	
	We will prove that there exists $(\mu,\psi)$ such that $\psi\in \mathcal{V}(r,\mu)$ and $(\mu,\psi)\in \Psi^{r,s}(m_*,\phi_*)$.
	By construction we have that, for each $n$, $\mathrm{supp}(\chi_n)$ lies in the compact set consisting of absolutely continuous trajectories $(x(\cdot),z(\cdot))\in \mathcal{C}_{s,r}$ such that $\|\dot{x}(t)\|,|\dot{z}(t)|\leq c\text{ for a.e. }t\in [s,r]$. 
	Additionally, elements of $\mathrm{supp}(\chi_n)$ are uniformly bounded. Thus, by~\cite[Proposition 7.1.5]{Ambrosio} we conclude that $\{\chi_n\}$ is relatively compact. This means that there exists a sequence $\{n_k\}$ and a probability $\chi\in\mathcal{C}_{0,T}$ such that
	\begin{equation}\label{conv:chi_n_k_chi}
	W_1(\chi_{n_k},\chi)\rightarrow 0\text{ as }k\rightarrow \infty.
	\end{equation} 
	
	Further, put
	\begin{equation*}\label{intro_fin_suff:m}
	\nu(t)\triangleq \hat{e}_t{}_\#\chi,\ \ m(t)\triangleq \mathrm{p}_\#\nu(t),\ \  \mu\triangleq m(r). 
	\end{equation*}
	
	Recall that $\{\phi_n^{J_n}\}\subset\mathrm{Lip}_C(\td)$ and  $\|\phi^{J_n}_n\|\leq M$. Thus, $\{\phi_{n}^{J_n}\}$ is relatively compact.  We can assume without loss of generality that the sequence $\{\phi_{n_k}^{J_{n_k}}\}$ converges to a function $\psi\in \mathrm{Lip}_C(\td)$. By (\ref{ineq:Wasserstein_e_t}) we have that
	\begin{equation}\label{ineq:m_q_nk}
	W_1(m(t),q_{n_k}(t))\leq W_1(\chi_{n_k},\chi). 
	\end{equation} Additionally, $r-t_{n_k}^{J_{n_k}}\leq 1/n$,
	$W_1(q_{n_k}(t'),q_{n_k}(t''))\leq c|t'-t''|$, $W_1(q_{n_k}(t_{n_k}^{J_{n_k}}),\mathrm{p}_\# \nu_{n_k}^{J_{n_k}} )\leq W_1(\eta_{n_k}^{J_{n_k}},\nu_{n_k}^{J_{n_k}})\leq (r-s)/n$. Since
	$\phi_{n_k}^{J_{n_k}}\in\mathcal{V}(t_{n_k}^{J_{n_k}},\mathrm{p}_\# \nu_{n_k}^{J_{n_k}})$ and $\mathcal{V}$ is upper semicontinuous, we have that
	\begin{equation}\label{incl_fin_suff:psi_limit_V}
	\psi\in \mathcal{V}(r,\mu).
	\end{equation}
	
	To show that $(\mu,\psi)\in \Psi^{r,s}(m_*,\phi_*)$ let us check that $\nu(\cdot)$ solves (\ref{system:mfdi}) and $\nu(\cdot)$, $\psi$ satisfy conditions ($\Psi$\ref{psi_cond:init_bound})--($\Psi$\ref{psi_cond:int_ineq}).
	
	To prove that $\nu(\cdot)$ is a solution of (\ref{system:mfdi}) pick $\tau_0,\tau_1\in [s,r]$. There exist numbers $I_n^0$, and $I_n^1$ such that $\tau_0\in [t_n^{I^0_n-1},t_n^{I_n^0}]$, $\tau_1\in [t_n^{I^1_n-1},t_n^{I_n^1}]$. Without loss of generality we can assume that $I_n^0<I_n^1$. Put $h_n^{I_0-1}\triangleq \tau_0$. For $k=I_n^0,\ldots,I_n^1-1$, put $h_n^k\triangleq t_n^k$. Set $h_n^{I_0^1}\triangleq \tau_1$. Further, denote $\delta_n^k\triangleq h_n^k-h_{n}^{k-1}$. Additionally, let $\varpi_n(t)\triangleq \hat{e}_t{}_\#\chi_n$. Notice that $q_n(t)=\mathrm{p}_\#\varpi_n(t)$. For any $w(\cdot)\in\mathrm{supp}(\chi_n)$, we have that
	\begin{equation*}
	\begin{split}
	\mathrm{dist}&\Bigl(w(\tau_1)-w(\tau_0),\int_{\tau_0}^{\tau_1}\widehat{F}(\theta,w(\theta),\varpi_n(\theta))d\theta \Bigr)\\&\leq
	\sum_{k=I_n^0}^{I_1^n}\mathrm{dist}\Bigl(w(h_n^k)-w(h_n^{k-1}), \int_{h_n^{k-1}}^{h_n^{k}}\widehat{F}(\theta,w(\theta),\varpi_n(\theta))d\theta \Bigr)\\ 
	&\leq
	\sum_{k=I_n^0}^{I_1^n}\mathrm{dist}\Bigl(w(h_n^k)-w(h_n^{k-1}), \delta_n^{k}\widehat{F}(t_n^{k-1},w(t_n^{k-1}),\varpi_n(t_n^{k-1}))\Bigr)
	\\&{}\hspace{80pt}+(\tau_n^1-\tau_n^0)[\alpha(1/n)+2Lc/n].
	\end{split}
	\end{equation*} 
	
	Therefore, by the construction of $\chi_n$ (see \ref{intro:chi_n}) and $\chi_n^k$ (see (\ref{intro:chi_j_n}) and (\ref{intro:chi_J_n_plus}))
	\begin{equation*}
	\begin{split}
	\intc{\tau_0}{\tau_1}\mathrm{dist}\Bigl(w(\tau_1)-w(\tau_0), &\int_{\tau_0}^{\tau_1}\widehat{F}(\theta,w(\theta),\varpi_n(\theta))d\theta \Bigr)\chi_n(d(w(\cdot)))\\ \leq
	\sum_{k=I_n^0}^{I_1^n}\intc{\tau_0}{\tau_1}\mathrm{dist}\Bigl(w(h_n^k)&-w(h_n^{k-1}), \delta_n^{k}\widehat{F}(t_n^{k-1},w(t_n^{k-1}),\varpi_n(t_n^{k-1}))\Bigr)\chi_n^k(d(w(\cdot)))\\ 
	&+(\tau_n^1-\tau_n^0)[\alpha(1/n)+2Lc/n]\\ = 
	\sum_{k=I_n^0}^{I_1^n}\int_{\tdr\times\rdp} \delta_n^{k}\mathrm{dist}&\Bigl(v, \widehat{F}(t_n^{k-1},w,\eta_n^{k-1}\Bigr)\bar{\gamma}_n^k(d(w,v))
	\\&+(\tau_n^1-\tau_n^0)[\alpha(1/n)+2Lc/n].
	\end{split}
	\end{equation*} 
	
	Since functions $(w,v)\mapsto \mathrm{dist}(v,\widehat{F}(t,w,\nu))$, $\nu\mapsto \mathrm{dist}(v,\widehat{F}(t,w,\nu))$ are Lipschitz continuous with constants $(2L+1)$ and $2L$ respectively, by the second statement of Lemma \ref{lm:Theta_pi},  Lemma \ref{lm:nu_j_mu_j} and choice of $\bar{\gamma}_n^k$, ${\gamma}_n^k$ we have that
	\begin{equation*}
	\begin{split}
	\intc{\tau_0}{\tau_1}\mathrm{dist}\Bigl(w(\tau_1)-w(\tau_0), &\int_{\tau_0}^{\tau_1}\widehat{F}(\theta,w(\theta),\varpi_n(\theta))d\theta \Bigr)\chi_n(d(w(\cdot)))\\ \leq
	\sum_{k=I_n^0}^{I_1^n}\int_{\tdr\times\rdp} \delta_n^{k}\mathrm{dist}&\Bigl(v, \widehat{F}(t_n^{k-1},w,\nu_n^{k-1}\Bigr){\gamma}_n^k(d(w,v))
	\\&+(\tau_n^1-\tau_n^0)[\alpha(1/n)+4Lc/n+(1+4L)T/n]\\ \leq (\tau_n^1-\tau_n^0)[\alpha(1/n)+4&Lc/n+(1+4L)T/n+1/n].
	\end{split}
	\end{equation*}
	
	Hence, using the fact that the function $w(\cdot)\mapsto \mathrm{dist}(w(\tau_1)-w(\tau_0), \int_{\tau_0}^{\tau_1}\widehat{F}(\theta,w(\theta),\varpi_n(\theta))d\theta)$ is $(2+2L(\tau_1-\tau_0))$-Lipschitz continuous and inequality (\ref{ineq:m_q_nk}), we get that
	\begin{equation*}
	\begin{split}
	\intc{\tau_0}{\tau_1}\mathrm{dist}\Bigl(w(\tau_1)-w(\tau_0), &\int_{\tau_0}^{\tau_1}\widehat{F}(\theta,w(\theta),\nu(\theta))d\theta \Bigr)\chi(d(w(\cdot)))\\ 
	\leq
	\intc{\tau_0}{\tau_1}\mathrm{dist}\Bigl(w(\tau_1)&-w(\tau_0), \int_{\tau_0}^{\tau_1}\widehat{F}(\theta,w(\theta),\varpi_{n_k}(\theta))d\theta \Bigr)\chi_{n_k}(d(w(\cdot)))\\+(2+4L(\tau_1&-\tau_0))W_1(\chi,\chi_{n_k})\\ \leq (2+2L(\tau_1-\tau_0))&W_1(\chi,\chi_{n_k})\\
	+(\tau_{n_k}^1-\tau_{n_k}^0)[&\alpha(1/n_k)+4Lc/n_k+(1+4L)T/n_k+1/n_k].
	\end{split}
	\end{equation*}
	Passing to the limit when $k\rightarrow\infty$, using the equality $F(\theta,\mathrm{p}(w(\theta)),\mathrm{p}_\#\nu(\theta))=\widehat{F}(\theta,w(\theta),\nu(\theta))$ and convergence (\ref{conv:chi_n_k_chi}), we conclude that, for any $\tau_0,\tau_1\in [s,r]$,
	$$\intc{\tau_0}{\tau_1}\mathrm{dist}\left(w(\tau_1)-w(\tau_0), \int_{\tau_0}^{\tau_1}F(\theta,\mathrm{p}(w(\theta)),m(\theta)d\theta \right)\chi(d(w(\cdot)))=0. $$ This means that, for $\chi$-a.e. $w(\cdot)\in\mathcal{C}_{s,r}$,
	$w(\cdot)\in \mathrm{SOL}(r,s,m(\cdot))$. This and definition of $\nu(\cdot)$ imply that
	$\nu(\cdot)$ solves (\ref{system:mfdi}) on $[s,r]$.
	
	Further, we have that property ($\Psi$\ref{psi_cond:init_bound}) is fulfilled by construction. Property ($\Psi$\ref{psi_cond:B}) follows from Lemmas \ref{lm:B_continuity}, \ref{lm:B_distance} and convergences $$\|\psi-\phi_{n_k}^{J_{n_k}}\|, \sup_{t\in [s,r]}W_1(q_{n_k}(t),m(t))\rightarrow 0 \text{ as }k\rightarrow \infty.$$
	
	Finally, convergence of $\eta_{n_k}^{J_{n_k}}$ to $\nu(r)$ and Lemma \ref{lm:action_n} yield ($\Psi$\ref{psi_cond:int_ineq}).
	
	Thus, we prove that $(\mu,\psi)\in \Psi^{r,s}(m_*,\phi_*)$. This and inclusion
	$\psi\in\mathcal{V}(r,\mu)$ (see (\ref{incl_fin_suff:psi_limit_V})) implies the sufficiency part of Theorem \ref{th:viability}.
\end{proof}

\section{Proof of the viability theorem. Necessity}\label{sect:proof_necissity}
\begin{proof}[Proof of Theorem \ref{th:viability}. Necessity]
	Let $\mathcal{V}$ be viable with respect to the mean field game dynamics. Choose $s\in [0,T]$, $m\in \td$, $\phi\in\mathcal{V}(s,m)$. By Definition \ref{def:dpp}, for any $r>s$, there exist  flows of probabilities $\nu^r(\cdot)\in\mathcal{N}_{s,r}$, $m^r(\cdot)\in\mathcal{M}_{s,r}$,  and a function $\psi^r\in C(\td)$ such that $\nu^r(\cdot)$ solves (\ref{system:mfdi}), $m^r(t)=\mathrm{p}_\#\nu^r(t)$ and conditions ($\Psi$\ref{psi_cond:init_bound})--($\Psi$\ref{psi_cond:int_ineq}) hold true. Furthermore, $\psi^r\in \mathcal{V}(r,m^r(r))$. Without loss of generality one can assume that $\nu^r(s)=\widehat{m}$. 
	
	Let $\chi^r\in\mathcal{C}_{s,r}$ be such that $\nu^r(t)=\hat{e}_t{}_\#\chi^r$ and $\mathrm{supp}(\chi^r)\subset\mathrm{SOL}(r,s,m^r(\cdot))$. 
	
	Further, we define the operator $\Delta^{s,r}:\mathcal{C}_{s,r}\rightarrow \td\times\rdp$ by the rule:
	\begin{equation}\label{intro:Delta}
	\Delta^{s,r}(x(\cdot),z(\cdot))=\left(x(s),\frac{x(r)-x(s)}{r-s},\frac{z(r)-z(s)}{r-s}\right). 
	\end{equation}
	
	Set
	$$\beta^r\triangleq \Delta^{s,r}{}_\#\chi^r. $$ 
	Notice that $\beta^r\in\mathcal{L}^c(m)$ for $c=R$. 
	Since $\|B_{m^r(\cdot)}^{s,r}\psi^r-\phi\|=0,$ using Lemma \ref{lm:bellman_close} we have that
	\begin{equation}\label{A_m_r}
	\|A_{m}^{s,r}\psi^r-\phi\|\leq (C+1)(\alpha(r-s)+2LR(r-s))(r-s).
	\end{equation} Notice that  $\Theta^{r-s}{}_\#\beta^r=\nu^r(r)$. Furthermore, by construction we have that
	\begin{equation}\label{ineq:beta_r_psi}
	[\phi^r,\nu^r(r)]\geq [\phi,\widehat{m}].
	\end{equation}
	
	For any $w(\cdot)=(x(\cdot),z(\cdot))\in \mathrm{supp}(\chi^r)$, we have that
	\begin{equation*}
	\begin{split}
	0&=\mathrm{dist}\left(w(r)-w(s),\int_s^rF(t,x(t),m^r(t))dt\right)\\&\geq \mathrm{dist}\left(w(r)-w(s),(r-s)F(s,x(s),m(s))\right)-(r-s)[\alpha(r-s)+4LR(r-s)].
	\end{split}
	\end{equation*} Thus,
	$$\mathrm{dist}\left(\frac{w(r)-w(s)}{r-s},F(s,x(s),m(s))\right)\leq \alpha(r-s)+4LR(r-s). $$
	Hence we have
	\begin{equation}\label{ineq:F_beta_r}
	\intTdRdp \mathrm{dist}\left(v,F(s,x,m)\right)\beta^r(d(x,v))\leq \alpha(r-s)+4LR(r-s).
	\end{equation}
	
	Since the set $\mathcal{L}^c(m)$ is compact in $\mathcal{P}^1(\td\times\rdp)$ we have that there exists a probability $\beta$ and a sequence $\{r_n\}_{n=1}^\infty$ such that
	$r_n\rightarrow s$, $W_1(\beta^{r_n},\beta)\rightarrow 0$ as $n\rightarrow\infty$.
	
	Let us show that the probability $\beta\in \mathcal{D}^c_F\mathcal{V}(s,m,\phi)$. Letting $\tau_n\triangleq r_n-s$ we get the first condition. We have that
	$\psi^{r_n}\in\mathcal{V}(r^n,\Theta^{r_n-s}{}_\#\beta^{r_n})$. Thus, the second condition of Definition \ref{def:tangent} is fulfilled. The third condition of Definition \ref{def:tangent} follows from (\ref{A_m_r}). Analogously, (\ref{ineq:beta_r_psi}) implies Condition 4 for $\nu^n\triangleq \nu^{r_n}(r_n)$. Finally, passing to the limit in (\ref{ineq:F_beta_r}) we get the last condition of Definition \ref{def:tangent}.
\end{proof}

\section{Concluding remarks}
In the paper we examined the value multifunction of the mean field game. It assigns to each time and probability on the phase space the set of continuous functions such that every function from this set is a reward of the representative player corresponding to a solution of the mean field game.  We find the  sufficient condition on a given upper semicontinuous set-valued mapping to be a value multifunctions. This condition is expressed in the terms of the viability theory. Moreover, we obtain the infinitesimal form of this condition. It involves the notion of the set-valued derivative  with respect to the mean field game dynamics. The results of the paper can be regarded  as an extension of the master equation of the mean field game in the framework of the viability theory. The proposed approach works even in the case when  the solution of the mean field game is nonunique or discontinuously depends on the initial distribution of players.

In the paper we considered only deterministic mean field games. The extension of the results of the paper to the  stochastic mean field games and especially to the mean field games with common noise is  a subject of a future research.

Notice that in the general case the value multifunction is nonunique. Thus, it is natural to consider the maximal  value multifunction. The maximal value multifunction is always viable under the mean field game dynamics. 
The search of the  characterization of the maximal value function is also a theme of future research. 

In the paper we use a set-valued derivative of the multifunction depending on time and probability. When the value multifunction is reduced to the usual function it is natural question to find the relation between the set-valued derivative on the one hand  and existing approaches to the smooth and nonsmooth analysis in the space of probability (see, for details, \cite{Ambrosio}, \cite{Cardaliaguet_Delarue_Lasry_Lions_2015}, \cite{Carmona_Delarue_I}, \cite{Carmona_Delarue_II}, \cite{Gangbo_Nguen_Tudorascu}, \cite{Gangbo_Tudorascu}, \cite{Marigonda_Quincampoix}) on the other hand. The answer to this question will clarify the relation  between the master equation and the results of the paper.

Finally, it is worth to mention that the master equation was used in \cite{Cardaliaguet_Delarue_Lasry_Lions_2015} to derive the convergence of feedback Nash equilibria for $N$ player games to the solution of the limiting mean field game under Lasry-Lions monotonicity condition which assures the uniqueness of solution of the mean field game. It is intriguing to apply the results of this paper to obtain analogs of the mentioned convergence  for the deterministic mean field  games with the multiplicity of solutions.

\section*{Appendix A. Properties of the solutions to mean field type differential inclusions}
\setcounter{theorem}{0}
\setcounter{equation}{0}
\renewcommand{\theequation}{A\arabic{equation}}
\begin{proof}[Proof of Proposition \ref{prop:shift}]
	Let $\chi\in\mathcal{P}^1(\mathcal{C}_{s,r})$ be such that $\nu(t)=\hat{e}_t{}_\#\chi$, and let, for $m(t)\triangleq\mathrm{p}_\#\nu(t)=e_t{}_\#\chi$, $\mathrm{supp}(\chi)\subset \mathrm{SOL}(r,s,m(\cdot))$. Now, we define the probability $\bar{\chi}\in\mathcal{P}^1(\mathcal{C}_{s,r})$ in the following way. Let 	 $\{\chi_{x}\}$ be a disintegration of $\chi$ along $m(s)$ i.e. for every $\varphi\in C_b(\mathcal{C}_{s,r})$,
	$$\int_{\mathcal{C}_{s,r}}\varphi(w(\cdot))\chi(d(w(\cdot)))=\int_{\td}\int_{\mathcal{C}_{s,r}}\varphi(w(\cdot))\chi_{x}(d(w(\cdot)))m(s,dx). $$ For $\varphi\in C_b(\mathcal{C}_{s,r})$, set
	\begin{equation*}
	\begin{split}
	\int_{\mathcal{C}_{s,r}}&\varphi(w(\cdot))\bar{\chi}(d(w(\cdot)))\\&\triangleq  \int_{\td}\int_{\mathcal{C}_{s,r}} \int_{\mathbb{R}}\varphi(x(\cdot),z(\cdot)-z(s)+z_*)\chi_{x}(d(x(\cdot),z(\cdot))) \nu_*(dz_*|x)m(s,dx). \end{split}
	\end{equation*}
	
	Put $\bar{\nu}\triangleq \hat{e}_t{}_\#\chi$. Obviously, $\mathrm{p}_\#\bar{\nu}(t)=m(t)=\mathrm{p}_\#\nu(t)$, $\bar{\nu}(s)=\nu_*$. Thus, statements~2 and 3 of the proposition are obtained.
	
	Now, we shall prove that $\bar{\nu}(\cdot)$ solves MFDI (\ref{system:mfdi}). To this end let us show the inclusion $\mathrm{supp}(\bar{\chi})\subset\mathrm{SOL}(r,s,m(\cdot))$. We have that $\bar{\chi}$ is concentrated on the set of motions $(x(\cdot),z(\cdot)+h)$, where $(x(\cdot),z(\cdot))$ solves (\ref{incl:F}) and $h$ does not depend on $t$. Therefore the motion $(x(\cdot),z(\cdot)+h)$ itself is a solution of (\ref{incl:F}). Thus, $\mathrm{supp}(\bar{\chi})\subset\mathrm{SOL}(r,s,m(\cdot))$. This proves the first statement of the proposition.
	
	Finally, for $\phi\in C(\td)$ and $t\in [s,r]$, we have
	\begin{equation*}
	\begin{split}
	[\phi,&\bar{\nu}(t)]=\int_{\tdr}(\phi(x)+z)\bar{\nu}(t,d(x,z))\\
	&=\int_{\mathcal{C}_{s,r}}(\varphi(x(t))+z(t))\bar{\chi}(d(x(\cdot),z(\cdot)))\\&= \int_{\td} \int_{\mathcal{C}_{s,r}}\int_{\mathbb{R}}(\phi(x(t))+z(t)-z(s)+z_*)\nu_*(dz|x)\chi_{x}(d(x(\cdot),z(\cdot)))m(s,dx)\\
	&=\int_{\mathcal{C}_{s,r}}\int_{\mathbb{R}}(\phi(x(t))+z(t))\chi(d(x(\cdot),z(\cdot)))\\
	&{}\hspace{40pt}-\int_{\mathcal{C}_{s,r}}\int_{\mathbb{R}}z(s)\chi(d(x(\cdot),z(\cdot)))+\int_{\tdr}z\nu_*(d(x,z))\\
	&=\int_{\tdr}(\phi(x)+z)\nu(t,d(x,z))-\int_{\tdr}z\nu(s,d(x,z))+\int_{\tdr}z\nu_*(d(x,z)).
	\end{split}
	\end{equation*} This gives the forth statement of the proposition. 
	
\end{proof}

\begin{proof}[Proof of Proposition \ref{prop:concatination}] For $i=1,2$, pick $\chi_i\in\mathcal{P}^1(\mathcal{C}_{s_{i-1},s_i})$ such that $\nu_i(t)=\hat{e}_t{}_\#\chi_i$, and, for $m_i(t)\triangleq \mathrm{p}_\#\nu_i(t)$, $\mathrm{supp}(\chi_i)\in\mathrm{SOL}(s_i,s_{i-1},m(\cdot))$. Put $\chi\triangleq \chi_1\odot\chi_2$, $\nu(t)\triangleq\hat{e}_t{}_\#\chi$. Further, denote $m(t)\triangleq \mathrm{p}_\#\nu(t)$. Since 	$$m(t)\triangleq \left\{\begin{array}{cc}
	m_1(t), & t\in [s_0,s_1];\\
	m_2(t), & t\in [s_1,s_2],
	\end{array}\right. $$ we have that $\mathrm{supp}(\chi)\subset\mathrm{SOL}(s_0,s_2,m(\cdot))$. This proves the proposition.
\end{proof}

\begin{proof}[Proof of Proposition \ref{prop:limit}]
	Pick $\chi_i\in\mathcal{P}^1(\mathcal{C}_{s,r})$ such that $\nu_i(t)=\hat{e}_t{}_\#\chi_i$ and $\mathrm{supp}(\chi)\subset \mathrm{SOL}(r,s,m_i(\cdot))$ where $m_i(t)=\mathrm{p}_\#\nu_i(t)$. Notice  that, for every $i$, $\chi_i$ is concentrated on the set of $2R$-Lipschitz continuous motions. Moreover, if $w(\cdot)\in \mathrm{supp}(\chi_i)$, then
	\begin{equation*}\label{A:ineq:norm}
	\|w(\tau_i)\|-2R(r-s)\leq \|w(\cdot)\|\leq \|w(\tau_i)\|+2R(r-s).
	\end{equation*} Hence, taking into account the equality $\hat{e}_{\tau_i}{}_\#\chi_i=\nu_i^\natural\in\mathcal{P}^1(\tdr)$, and the convergences $\tau_i\rightarrow\tau$, $W_1(\nu_i^\natural,\nu^\natural)\rightarrow 0$, we get that the sequence $\{\chi_i\}$ is tight and has uniformly integrable 1-moment. Thus, there exists a subsequence $\{\chi_{i_k}\}$ and a probability $\chi^*\in\mathcal{P}^1(\mathcal{C}_{s,r})$ such that
	$$\lim_{k\rightarrow\infty} W_1(\chi_{i_k},\chi^*)=0. $$ Put
	$\nu^*(t)\triangleq \hat{e}_t{}_\#\chi^*$, $m^*(t)\triangleq \mathrm{p}_\#\nu^*(t)=e_t{}_\#\chi^*$. By (\ref{ineq:Wasserstein_e_t}) we have that
	$$\lim_{k\rightarrow\infty}\sup_{t\in [s,r]}W_1(\nu_{i_k}(t),\nu^*(t))=\lim_{k\rightarrow\infty}\sup_{t\in [s,r]}W_1(m_{i_k}(t),m^*(t))= 0.$$ 
	This implies that \begin{equation}\label{A:ineq:nu_star_tau_limit}
	\begin{split}
	W_1(&\nu^*(\tau),\nu^\natural)=W_1(\hat{e}_\tau{}_\#\chi^*,\nu^\natural)\\
	&\leq \lim_{k\rightarrow \infty} [W_1(\hat{e}_{\tau}{}_\#\chi^*,\hat{e}_{\tau_{i_k}}{}_\#\chi^*)
	+W_1(\hat{e}_{\tau_{i_k}}{}_\#\chi^*,\hat{e}_{\tau_{i_k}}{}_\#\chi_{i_k})+ W_1(\hat{e}_{\tau_{i_k}}{}_\#\chi_{i_k},\nu^\natural)]. \end{split}
	\end{equation} Using the fact that the probability $\chi^*$ is concentrated on $2R$-Lipschitz motions, we get that the firs term in the right-hand side of (\ref{A:ineq:nu_star_tau_limit})  is bounded by $2R|\tau_{i_k}-\tau|$. Further, the second term is bounded by $W_1(\chi^*,\chi_{i_k})$. Finally, the third term is equal to $W_1(\nu_{i_k}^\natural,\nu^\natural)$. Taking into account that $\tau_i\rightarrow\tau$, $W_1(\chi^*,\chi_{i_k})$, $W_1(\nu_i^\natural,\nu^\natural)\rightarrow 0$, we conclude from (\ref{A:ineq:nu_star_tau_limit}) that $\nu^*(\tau)=\nu^\natural$.  
	
	Now, let us prove that $\nu^*(\cdot)$ solves (\ref{system:mfdi}) for $m(\cdot)=m^*(\cdot)$. It suffices to prove that $\mathrm{supp}(\chi^*)\subset\mathrm{SOL}(r,s,m^*(\cdot))$. Let $w(\cdot)\in\mathrm{supp}(\chi^*)$. By~\cite[Proposition 5.1.8]{Ambrosio} there exists a sequence $\{w_{i_k}(\cdot)\}\in\mathcal{C}_{s,r}$ such that $w_{i_k}\in\mathrm{supp}(\chi_{i_k})$ and $\|w(\cdot)-w_{i_k}(\cdot)\|\rightarrow 0$ as $k\rightarrow \infty$. Furthermore, since $\mathrm{supp}(\chi_{i_k})\subset\mathrm{SOL}(r,s,m_{i_k}(\cdot))$, we have that, for every $t',t''\in [s,r]$, $t'<t''$,
	$$\mathrm{dist}\left(w_{i_k}(t'')-w_{i_k}(t'),\int_{t'}^{t''}F(t,\mathrm{p}(w_{i_k}(t)),m_{i_k}(t))dt\right) =0.$$ By condition (M\ref{cond:lipschitz}), we have that the functions 
	\begin{equation*}
	\begin{split}
	\Bigl|\mathrm{dist}\Bigl(w(t'')-&w(t'),\int_{t'}^{t''}F(t,\mathrm{p}(w(t)),m^*(t))dt\Bigr)\\
	&-\mathrm{dist}\Bigl(w_{i_k}(t'')-w_{i_k}(t'), \int_{t'}^{t''}F(t,\mathrm{p}(w_{i_k}(t)),m_{i_k}(t))dt\Bigr)\Bigr|\\\leq
	(2+L)\|&w_{i_k}(\cdot)-w(\cdot)\|+L\sup_{t\in [s,r]}W_1(m_{i_k}(t),m^*(t)).
	\end{split} 
	\end{equation*}	Hence, we conclude that for any $t',t''\in [s,r]$, $t'<t''$,
	$$\mathrm{dist}\left(w(t'')-w(t'),\int_{t'}^{t''}F(t,\mathrm{p}(w(t)),m^*(t))dt\right) =0.$$
	This means that $w(\cdot)$ is a solution of (\ref{incl:F}) for $m(t)=m^*(t)$. Therefore, $\nu^*$ solves MFDI (\ref{system:mfdi}) for $m(\cdot)=m^*(\cdot)$.
\end{proof}

\section*{Appendix B. List of notation}
\begin{longtable}{l p{12.0cm}}
	$\mathrm{dist}(x,\Upsilon)$ & distance between the point $x$ and the set $\Upsilon$; see \S \ref{subsect:notation}\\
	$\mathcal{P}(X)$ & the set of probabilities on the metric space $X$; see \S \ref{subsect:notation}\\
	$\mathcal{P}^1(X)$ & the set of probabilities on the metric space $X$  with the finite first moment; see \S \ref{subsect:notation}\\
	$W_1(m_1,m_2)$ & Kantorovich-Rubinstein (1-Wasserstein) distance between probabilities $m_1,m_2\in\mathcal{P}^1(X)$; see (\ref{intro:wass_distance}) \\
	$\mathrm{Lip}_K(X)$ & the set of $K$-Lipschitz continuous functions from the metric space $X$ to $\mathbb{R}$; see \S \ref{subsect:notation} \\
	$h{}_\#m$ & pushforward measure of $m$ by the mapping $h$; see  (\ref{intro:push_forward})\\
	$\pi(\cdot|x)$ & disintegration of the probability $\pi$ defined on $X\times Y$; see (\ref{intro:disintegration})\\
	$\pi_{1,2}*\pi_{2,3}$ & composition of probabilities $\pi_{1,2}$ and $\pi_{2,3}$; see (\ref{intro:composition})\\
	$\td$ & $d$-dimensional torus; see \S \ref{subsect:state}\\
	$\mathrm{p}$ & natural projection of extended state space $\tdr$ on $\td$; see \S \ref{subsect:state}\\
	$\mathcal{C}_{s,r}$ & $C([s,r];\tdr)$; see \S \ref{subsect:state} \\
	$e_t$, $\hat{e}_t$ & evaluation operators from $\mathcal{C}_{s,r}$ to $\td$ and $\tdr$ respectively; see (\ref{intro:evaluation_operators})\\
	$\mathcal{M}_{s,r}$ & the set of measurable functions from $[s,r]$ to $\mathcal{P}^1(\td)$; see \S \ref{subsect:state} \\
	$\mathcal{N}_{s,r}$ & the set of measurable functions from $[s,r]$ to $\mathcal{P}^1(\tdr)$; see \S \ref{subsect:state} \\
	$\widehat{m}$ & lifting of $m\in\ptd$ to $\mathcal{P}^1(\tdr)$; see (\ref{intro:lifting_m})\\
	$[\phi,\nu]$ & the averaging of the function $(x,z)\mapsto \phi(x)+z$ according to $\nu$; see (\ref{intro:action})\\
	$\odot$ & operation of concatenation; see (\ref{intro:concatination_mothions}) and (\ref{intro:concatination})\\
	$F(t,x,m)$ & vectogram of the dynamics in the extended state space; see (\ref{intro:F})\\
	$\widehat{F}(t,w,\nu)$ & $F(t,\mathrm{p}(w),\mathrm{p}_\#\nu)$; see (\ref{intro:hat_F})\\
	$\mathcal{U}_{s,r}$ & set of relaxed controls on $[s,r]$; see \S\ref{subsect:dynamics}\\
	$\mathrm{Sol}(r,s,y,m(\cdot))$ & set of solutions of differential inclusion $(\dot{x}(t),\dot{z}(t))\in F(t,x(t),m(t))$ on $[s,r]$ satisfying  $x(s)=y$; see \S \ref{subsect:dynamics} \\
	$\mathrm{SOL}(r,s,m(\cdot))$ & set of all solutions of differential inclusion $(\dot{x}(t),\dot{z}(t))\in F(t,x(t),m(t))$ on $[s,r]$; see~(\ref{introl:SOL}) \\
	$B^{s,r}_{m(\cdot)}$ & Bellman operator on $[s,r]$ for the control problem with dynamics (\ref{sys:dynamics}), integral part of the  reward function $g$ and the given flow of probabilities $m(\cdot)$; see (\ref{intro:Bellman})\\
	$\Psi^{r,s}$ & set-valued mapping from $\mathcal{P}^1(\td)\times C(\td)$ to itself determining the mean field game dynamics; see Definition \ref{def:flow} \\
	$\mathbb{B}_c$  & closed ball in $\rd$ of the radius $c$ centered at the origin; see \S \ref{sect:viability_theorem}\\
	$\mathcal{L}^c(m)$ & set of probabilities on $\td\times\rdp$ concentrated on $\td\times\mathbb{B}_c\times [-c,c]$ with the marginal distribution on $\td$ equal to $m$; see (\ref{intro:L_c})\\
	$\mathcal{L}^c_*(\nu)$ & set of probabilities on $\tdr\times\rdp$ concentrated on $\tdr\times\mathbb{B}_c\times [-c,c]$ with the marginal distribution on $\td$ equal to $\nu$; see (\ref{intro:L_star_c})\\
	$A^{s,r}_{m}$ & Bellman operator on $[s,r]$ for the control problem with ``frozen'' dynamics; see (\ref{intro:A_Bellman})\\
	$\Theta^\tau$ & shift operator defined on $\td\times\mathbb{R}^{d+1}$ with values in $\tdr$; see (\ref{intro:Theta})\\
	$\mathcal{D}^c_F\mathcal{V}(t,m,\phi)$ & set-valued derivative of the multifunction $\mathcal{V}$ at $t,m,\phi$ by virtue of $F$ under constraints determined by constant~$c$; see Definition \ref{def:tangent}\\
	$\mathrm{BL}_{M,C}$ & set of functions $\phi\in\mathrm{Lip}_C(\td)$ bounded by $M$; see \S \ref{sect:viability_theorem}\\
	$\Xi^\tau$ & shift operator defined on $\tdr\times\mathbb{R}^{d+1}$ with values in $\tdr$; see (\ref{intro:Xi}) \\ 
	$\Delta^{s,r}$ & difference operator; see (\ref{intro:Delta}) 
	
\end{longtable}

\begin{acknowledgement}
	I would like to thank the referee for his/her valuable comments.
\end{acknowledgement}

\bibliographystyle{abbrv}
\bibliography{viab_mfg} 

\end{document}